\theoremstyle{plain}
\newaliascnt{theorem}{dummy}
\newtheorem{theorem}[theorem]{Theorem}
\newaliascnt{proposition}{dummy}
\newtheorem{proposition}[proposition]{Proposition}
\newaliascnt{corollary}{dummy}
\newtheorem{corollary}[corollary]{Corollary}
\newaliascnt{lemma}{dummy}
\newtheorem{lemma}[lemma]{Lemma}
\newaliascnt{conjecture}{dummy}
\newtheorem{conjecture}[conjecture]{Conjecture}
\theoremstyle{definition}
\newaliascnt{definition}{dummy}
\newtheorem{definition}[definition]{Definition}
\newaliascnt{example}{dummy}
\newaliascnt{remark}{dummy}
\newtheorem{remark}[remark]{Remark}
\theoremstyle{remark}
\numberwithin{equation}{section} 
\newcommand{\calA}{\mathcal{A}}
\newcommand{\calD}{\mathcal{D}}
\newcommand{\calF}{\mathcal{F}}
\newcommand{\calP}{\mathcal{P}}
\newcommand{\calS}{\mathcal{S}}
\newcommand{\bbQ}{\mathbb{Q}}
\newcommand{\bbR}{\mathbb{R}}
\newcommand{\bbC}{\mathbb{C}}
\newcommand{\bbZ}{\mathbb{Z}}
\newcommand{\bbN}{\mathbb{N}}
\newcommand{\bbP}{\mathbb{P}}
\newcommand{\bbK}{\mathbb{K}}
\DeclareMathOperator*{\intxn}{\cap}
\DeclareMathOperator*{\union}{\cup}
\DeclareMathOperator*{\Union}{\bigcup}
\providecommand{\abs}[1]{\lvert#1\rvert}
\providecommand{\Abs}[1]{\left\lvert#1\right\rvert}
\providecommand{\fl}[1]{\lfloor#1\rfloor}
\providecommand{\fl}[1]{\lfloor#1\rfloor}
\providecommand{\Ceil}[1]{\left\lceil #1\right\rceil}
\providecommand{\norm}[2][]{\lVert#2\rVert_{#1}}
\newcommand{\euclid}[1][d]{\mathbb{R}^{#1}}
\newcommand{\wt}[1]{\widetilde{#1}}
\newcommand{\wh}[1]{\widehat{#1}}
\newcommand{\mFor}{\quad\text{for }}
\newcommand{\mAnd}{\quad\text{ and }\quad}
\newcommand{\mForall}{\quad\text{for all }}
\newcommand{\tUB}{\min\left\{1,H(p)/\chi\right\}}
\newcommand{\Alg}{\overline{\mathbb{Q}}}
\newcommand{\AlgInt}{\overline{\mathbb{Z}}}
\DeclareMathOperator{\diag}{diag}
\newcommand{\GLR}[1][d]{\mathrm{GL}_{#1}(\mathbb{R})}
\newcommand{\Ogrp}[1]{\mathrm{O}_{#1}(\mathbb{R})}
\newcommand{\Ugrp}[1]{\mathrm{U}_{#1}(\mathbb{C})}
\newcommand{\MatR}[1][2d]{\mathrm{M}_{#1}(\mathbb{R})}
\newcommand{\MatC}[1][d]{\mathrm{M}_{#1}(\mathbb{C})}
\title{Dimension of homogeneous iterated function systems with algebraic translations}
\author{De-Jun Feng}
\address{
	Department of Mathematics\\
	The Chinese University of Hong Kong\\
	Shatin,  Hong Kong
}
\email{djfeng@math.cuhk.edu.hk}
\author{Zhou Feng}
\address{
	Department of Mathematics\\
	The Chinese University of Hong Kong\\
	Shatin,  Hong Kong
}
\email{zfeng@math.cuhk.edu.hk}
\subjclass[2020]{Primary 28A80, 42A85}
\keywords{Self-similar measures, dimension of measures,  exact overlaps conjecture,  Mahler measure, entropy}
\date{}
\dedicatory{}
\thanks{}
\begin{document}
	
\begin{abstract}
 Let $ \mu $ be the self-similar measure associated with a homogeneous iterated function system $ \Phi = \{ \lambda x + t_j \}_{j=1}^m $ on $\bbR$ and  a probability vector $ (p_{j})_{j=1}^m$, where $0\neq \lambda\in (-1,1)$  and $t_j\in \bbR$.   Recently by modifying the arguments of  Varj\'u in \cite{Varju2019},   Rapaport and Varj\'u \cite{RapaportVarju2024}  showed that if $t_1,\ldots, t_m$ are rational numbers and $0<\lambda<1$, then $$ \dim \mu =\min\left \{ 1, \; \frac{\sum_{j=1}^m p_{j}\log p_{j}}{ \log |\lambda| }\right\}$$
 unless $ \Phi $ has exact overlaps.  In this paper, we further show that the above equality holds in the case when $t_1,\ldots, t_m$ are algebraic numbers and $0<|\lambda|<1$.  This is done by adapting and extending the ideas of Breuillard,  Rapaport and Varj\'u in \cite{BreuillardVarju2019,BreuillardVarju2020,RapaportVarju2024,Varju2019}.
\end{abstract}

\maketitle

\section{Introduction}\label{sec:intro}

\subsection{Background and our main result}\label{subsec:BgMT}

 In this paper, by an \textit{iterated function system} (IFS) we mean a finite family $ \Phi = \{\varphi_{j}\}_{j=1}^m $ of contracting similarities on $ \bbR $, taking the form
\begin{equation*}\label{eq:IFS}
	\varphi_{j}(x) = \lambda_{j} x + t_{j},
\end{equation*}
where $ 0\neq \lambda_{j} \in (-1, 1) $ and $ t_{j} \in \bbR $ for $1\leq j\leq  m$. It is well known~\cite{Hutchinson1981} that there exists a unique nonempty compact set $ K \subset \bbR $ such that
\begin{equation*}\label{eq:selfsim-set}
	K = \Union_{j=1}^m \varphi_{j}(K).
\end{equation*}
We call $ K $ the \textit{self-similar set} generated by $ \Phi $. Let $ p = (p_{j})_{j=1}^m $ be a probability vector  with strictly positive
entries. By \cite{Hutchinson1981} there exists a unique Borel probability measure $ \mu $ on $ \bbR $ such that
\begin{equation*}
	\mu = \sum_{j=1}^m p_{j}  \mu\circ \varphi_{j}^{-1}.
\end{equation*}
Actually $\mu$ is fully supported on $K$.
 We call $ \mu $ the \textit{self-similar measure} associated with $ \Phi $ and $ p $. It is known \cite{FengHu2009} that  $ \mu $ is  exact dimensional, in the sense that the limit
$$
\lim_{r\to 0}\frac{\log \mu([x-r,x+r])}{\log r}
$$
exists and is equal to a constant for $\mu$-almost every $x$; we write $\dim \mu$ for  this constant and call it  the {\it dimension} of $\mu$.

The dimension theory of self-similar measures is a central topic in fractal geometry. Let $\mu=\mu_{\Phi, p}$ be the self-similar measure associated with an IFS $\Phi=\{\varphi_{j}(x)=\lambda_jx+t_j\}_{j=1}^m $ on $\bbR$ and a probability vector $p$ with strictly positive entries.   There is a natural upper bound for $ \dim \mu $ in terms of the entropy $ H(p) $ and the Lyapunov exponent $ \chi $. That is, writing
\begin{equation}\label{eq:def-DynQuantities}
	H(p) := \sum_{j=1}^m - p_{j} \log p_{j} \mAnd \chi := - \sum_{j=1}^m p_{j} \log \abs{\lambda_{j}},
\end{equation}
 one has
\begin{equation}\label{eq:naturalUB}
	\dim \mu \leq \min \left \{1,\; \frac{H(p)}{\chi} \right \};
\end{equation}
 see e.g.~\cite[Corollary 5.2.3]{Edgar1998}.
It turns out that the equality in \eqref{eq:naturalUB} holds in many cases, such as when  the IFS $\Phi$ satisfies the open set condition (see e.g. \cite[ Theorem 5.2.5]{Edgar1998}). Here $\Phi$ is said to satisfy the {\it open set condition} if there is a nonempty open set $U\subset \bbR$ such that $\varphi_j(U)$,  $1\leq j\leq m$, are disjoint subsets of $U$.
On the other hand, there are  some circumstances in which the dimension drop (that is, $ \dim \mu < \tUB $) can occur. For $ n \in \bbN $ and
$ J = j_{1}\ldots j_{n} \in \{1,\ldots, m\}^{n} $, write for brevity
\begin{equation*}
	\varphi_{J} = \varphi_{j_{1}} \circ \cdots \circ \varphi_{j_{n}} \mAnd \lambda_{J} = \lambda_{j_{1}} \cdots \lambda_{j_{n}}.
\end{equation*}
The IFS $ \Phi $ is said to have \textit{exact overlaps}  if there exist $n$ and  distinct $ J_{1}, J_{2}\in \{1,\ldots, m\}^{n} $ such that $ \varphi_{J_{1}} =  \varphi_{J_{2}}$.  It is not difficult to see that $ \dim \mu < \tUB $ whenever $ \Phi $ has exact overlaps and $ \dim\mu < 1 $. The following folklore conjecture, which is also called the \textit{exact overlaps conjecture}, asserts that these are  the only  circumstances in which the dimension  drop can occur.

\begin{conjecture}\label{conj:eoc}
	Let $\mu$ be the self-similar measure associated with the IFS $\Phi$ on $\bbR$  and the probability vector $p$. If  $ \Phi $ has no exact overlaps, then
	$$
	\dim \mu = \min \left \{1,\; \frac{H(p)}{\chi} \right \}.
	$$
\end{conjecture}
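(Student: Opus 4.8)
The plan is to run the two-step strategy that underlies all known progress on \autoref{conj:eoc}. The first step is Hochman's theorem, which for an arbitrary self-similar measure $\mu$ on $\bbR$ yields the dichotomy: either $\dim\mu=\tUB$, or the cylinder separation
\[
	\Delta_n:=\min\bigl\{\,d(\varphi_J,\varphi_{J'}):\ J\ne J'\in\{1,\ldots,m\}^n\,\bigr\}
\]
(with $d$ a fixed left-invariant metric on the group of similarities of $\bbR$) decays super-exponentially, meaning $-\tfrac1n\log\Delta_n\to\infty$ as $n\to\infty$; moreover $\Delta_n=0$ for some $n$ is precisely the exact-overlaps condition. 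Thus \autoref{conj:eoc} is equivalent to the \emph{separation statement}: if $\Phi$ has no exact overlaps, then $(\Delta_n)_{n\ge1}$ does not decay super-exponentially.

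The second step is to prove this separation statement, and the only known tools here are arithmetic. The quantity encoding $d(\varphi_J,\varphi_{J'})$ — the difference of translation parts when $\lambda_J=\lambda_{J'}$, or more generally the coefficients of $\varphi_J-\varphi_{J'}$ in the affine group — is a polynomial in the parameters $\lambda_1,\ldots,\lambda_m,t_1,\ldots,t_m$ with integer coefficients of size at most exponential in $n$. When all parameters are algebraic, one bounds the logarithmic Weil height of this nonzero algebraic number linearly in $n$ and applies a Liouville/product-formula inequality to obtain $\Delta_n\ge c^n$ for some $c>0$; this is Hochman's argument, and it already proves \autoref{conj:eoc} for algebraic parameters. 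When the contraction ratio $\lambda$ is transcendental — the setting of Varj\'u's work on Bernoulli convolutions \cite{Varju2019}, of Breuillard--Varj\'u \cite{BreuillardVarju2019}, and, in the homogeneous algebraic-translation case, of Rapaport--Varj\'u \cite{RapaportVarju2024} and of the present paper — one replaces the height bound by a quantitative \emph{entropy} estimate: under a Diophantine-type hypothesis on $\lambda$ (an exponential-in-$n$ lower bound for the Mahler measures of degree-$n$ integer polynomials that are small near $\lambda$), the entropy of $\mu$ at step $n$ and scale $\abs{\lambda}^n$ stays bounded below, which is incompatible with super-exponential concentration unless $\varphi_J=\varphi_{J'}$ for some $J\ne J'$. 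The engine that converts such a lower bound on fine-scale entropy into full dimension — and, together with Hochman's dichotomy, closes the argument — is the entropy-increase method of Breuillard--Varj\'u.

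The step I expect to be the main obstacle, and the reason \autoref{conj:eoc} remains open in the stated generality, is exactly the separation statement for \emph{arbitrary} real parameters. A transcendental $\lambda$ or $t_j$ may be a Liouville-type number, approximable super-exponentially well by algebraic numbers of bounded degree; in that case there is no arithmetic obstruction to super-exponential concentration of cylinders, and none of the height- or Mahler-measure-based estimates survive. Consequently a realistic execution of this plan proves \autoref{conj:eoc} under a Diophantine hypothesis on the parameters — automatic when they are algebraic, and valid for Lebesgue-almost every choice of parameters — while the unconditional statement would demand a genuinely new mechanism for excluding super-exponential concentration with no Diophantine input available. The contribution of the present paper fits into step two: it establishes the separation statement for homogeneous systems $\{\lambda x+t_j\}$ whose translations $t_j$ are algebraic and whose contraction ratio satisfies $0<\abs{\lambda}<1$ (in particular allowing $\lambda<0$), by adapting and extending the Breuillard--Rapaport--Varj\'u machinery; the sign of $\lambda$ and the passage from rational to algebraic translations are where the substantive new difficulties arise.
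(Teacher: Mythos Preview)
The statement you were given is \autoref{conj:eoc}, which the paper explicitly labels a \emph{conjecture} and does not prove in full generality; there is no ``paper's own proof'' to compare against. The paper proves only the special case \autoref{thm:main} (homogeneous IFS with algebraic translations). Your write-up is accordingly not a proof but a survey of the known strategy and its limitations, and you correctly acknowledge that the unconditional statement remains open.

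One substantive correction to your description of the paper's contribution: you say the paper ``establishes the separation statement'' for homogeneous systems with algebraic translations. It does not. The paper, following Varj\'u \cite{Varju2019} and Rapaport--Varj\'u \cite{RapaportVarju2024}, proves the dimension formula \emph{without} establishing that $(\Delta_n)$ fails to decay super-exponentially. Indeed, the paper itself cites examples \cite{Baker2021,BaranyKaeenmaeki2021} of IFSs with no exact overlaps for which the exponential separation condition fails, so your proposed reduction ``\autoref{conj:eoc} is equivalent to the separation statement'' is already known to be the wrong formulation. What Hochman's theorem actually gives is that dimension drop forces super-exponential decay of $\Delta_n$, but the converse direction --- that super-exponential decay forces dimension drop --- is false. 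The paper's argument instead runs: assume dimension drop; use \autoref{thm:EAA} to produce, for infinitely many $n$, algebraic $\eta$ extremely close to $\lambda$ with $\dim\mu_\eta$ also dropped; use \autoref{thm:UBGarsia} to bound $M(\eta)$; then use the root-multiplicity estimate \autoref{prop:algLargeOrder} together with the root-counting bound \autoref{lem:NumRoots} to reach a contradiction. No lower bound on $\Delta_n$ is ever proved.
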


 A version of this conjecture for the dimension of self-similar sets was first stated  by Simon~\cite{Simon1996}.

 In recent years, significant progress has been made towards Conjecture \ref{conj:eoc}. The first breakthrough was achieved by Hochman~\cite{Hochman2014},   who proved Conjecture \ref{conj:eoc} if the IFS $\Phi$ satisfies  the exponential separation condition.   To introduce this separation condition, for $n\in \bbN$ define
$$
\Delta_n=\min\left\{|\varphi_{w_1}(0)-\varphi_{w_2}(0)|:\; w_1, w_2\in \{1,\ldots, m\}^n, w_1\neq w_2 \mbox{ and } \lambda_{w_1}=\lambda_{w_2}\right\}.
$$
The IFS $\Phi$ is said to satisfy the {\it exponential separation condition} if there exists $c>0$ such that $\Delta_n\geq c^n$ for infinitely many $n$.  The exponential separation condition is satisfied by broad families of
ovelapping IFSs. For instance, it is satisfied by every algebraic IFS $\Phi$ on $\Bbb R$ (i.e., all the parameters $\lambda_j$ and $t_j$  are algebraic numbers) that does not allow
exact overlaps;  see \cite[Theorem 1.5]{Hochman2014}.

As a far-reaching extension of Hochman’s result, Rapaport \cite{Rapaport2022} recently
established the conjecture if only the contraction parameters
$\lambda_j$ are assumed to be algebraic. Another important breakthrough was made by Varj\'u ~\cite{Varju2019},  who showed that if $\mu$ is a Bernoulli convolution (that is, $\mu$ is the self-similar measure associated with the IFS $\Phi_\lambda=\{\lambda x, \lambda x+1\}$ and the probability vector $p=(1/2, 1/2)$, where $1/2<\lambda<1$),
then $ \dim \mu = 1 $ if $\lambda$ is transcendental. Combined with Hochman's result regarding IFSs with algebraic parameters, this verifies Conjecture \ref{conj:eoc} for the family of Bernoulli convolutions.

 Very recently,  Rapaport and Varj\'u~\cite{RapaportVarju2024}  investigated  homogeneous IFSs of three maps.  Among other things, they proved that for each $(\lambda,\tau)\in (2^{2/3},1)\times \bbR$,  Conjecture \ref{conj:eoc} holds for the IFS $\Phi_{\lambda,\tau}=\{\lambda x,\; \lambda x+1,\; \lambda x+\tau\}$ with equal probability weights.
 Moreover, by modifying the arguments of  Varj\'u in \cite{Varju2019},  Rapaport and Varj\'u  showed that Conjecture \ref{conj:eoc} holds for every  homogeneous IFS $\Phi=\{\lambda x+t_j\}_{j=1}^m$ with positive contraction ratio $\lambda$ and  rational translations $t_j$ (see \cite[Theorem A.8]{RapaportVarju2024}).

 We remark that a stronger version of Conjecture \ref{conj:eoc} involving the
$L^q$ dimension (for $q\geq 1$) instead of  dimension of measures was established
 by Shmerkin \cite{Shmerkin2019} under the exponential separation condition. Subsequently, the $L^q$ dimension (for $0<q<1$) of self-similar measures on $\bbR$  was also determined by Barral and the first author \cite{BarralFeng2021} under the same separation condition. It is worth pointing out that there are some IFSs without exact overlaps for
which  the exponential separation condition fails; see \cite{Baker2021,BaranyKaeenmaeki2021} and also \cite{Baker2022b,Chen2021}.

 The main result of this paper is the following theorem,  which verifies Conjecture   \ref{conj:eoc}  for homogeneous IFSs with algebraic translations. It is a natural generalization of \cite[Theorem A.8] {RapaportVarju2024}.

\begin{theorem}\label{thm:main}
 Let $ \mu $ be the self-similar measure associated with a homogeneous IFS $ \Phi = \{ \lambda x + t_{j} \}_{j=1}^m$ on $\bbR$ and a probability vector $ p $, where $0\neq \lambda\in (-1, 1)$. Assume that  all the translations $t_j$ are algebraic numbers.    If $ \Phi $ has no exact overlaps, then
\begin{equation*}
	\dim \mu= \min \left \{1, \;\frac{H(p)}{- \log \abs{\lambda}}\right \}.
\end{equation*}
\end{theorem}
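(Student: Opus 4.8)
The plan is first to dispose of the case where $\lambda$ is algebraic and then to concentrate on transcendental $\lambda$. If $\lambda$ is algebraic, then all the parameters $\lambda,t_1,\dots,t_m$ of $\Phi$ are algebraic, so $\Phi$ is an algebraic IFS on $\bbR$; since it has no exact overlaps, Hochman's separation criterion \cite[Theorem~1.5]{Hochman2014} shows that $\Phi$ satisfies the exponential separation condition, and Hochman's theorem \cite{Hochman2014} then gives $\dim\mu=\min\{1,H(p)/\chi\}$ with $\chi=-\log\abs{\lambda}$, which is the desired equality. So assume henceforth that $\lambda$ is transcendental. Then $\Phi$ automatically has no exact overlaps, and the $t_j$ are pairwise distinct: for $J_1\neq J_2$ of the same length $n$, $\varphi_{J_1}(0)-\varphi_{J_2}(0)=\sum_{k=1}^n(t_{(J_1)_k}-t_{(J_2)_k})\lambda^{k-1}$ is the value at $\lambda$ of a nonzero polynomial with algebraic coefficients, hence is nonzero. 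Thus, for transcendental $\lambda$, the asserted identity is to be proved unconditionally, and by \eqref{eq:naturalUB} it suffices to prove the lower bound $\dim\mu\geq\min\{1,H(p)/\chi\}$. After replacing $\Phi$ by an affine conjugate --- which changes neither $\dim\mu$ nor the hypotheses --- I may assume $t_1,\dots,t_m\in\mathcal{O}_K$, the ring of integers of the number field $K=\bbQ(t_1,\dots,t_m)$; the case $\lambda<0$ is carried through in parallel and contributes only bookkeeping, not new ideas.

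Next I would set up the entropy machinery underlying the work of Varj\'u and of Rapaport--Varj\'u. Write $\chi=-\log\abs{\lambda}$, let $(\omega_k)_{k\ge0}$ be i.i.d.\ with law $p$, and for $n\in\bbN$ let $\nu_n$ be the distribution of $\sum_{k=0}^{n-1}t_{\omega_k}\lambda^k$: it is the pushforward of the product measure $p^{\otimes n}$ under $\omega\mapsto\sum_{k<n}t_{\omega_k}\lambda^k$, it is finitely supported, and $\mu=\nu_n*S_{\lambda^n}\mu$, where $S_r$ denotes the pushforward under $x\mapsto rx$. Writing $H(\theta;r)$ for the Shannon entropy of $\theta$ with respect to the partition of $\bbR$ into length-$r$ intervals, the absence of exact overlaps makes this map injective, so $H(\nu_n)=nH(p)$ and hence $H(\nu_n;\abs{\lambda}^n)\le nH(p)$, while also $H(\nu_n;\abs{\lambda}^n)\le n\chi+O(1)$ because $\supp\nu_n$ has diameter $O(1)$ and $\abs{\lambda}^{-n}=e^{n\chi}$. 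Since $\mu=\nu_n*S_{\lambda^n}\mu$ with $S_{\lambda^n}\mu$ supported on an interval of length $O(\abs{\lambda}^n)$, a standard entropy inequality for convolutions gives $H(\mu;\abs{\lambda}^n)\ge H(\nu_n;\abs{\lambda}^n)-O(1)$, and combined with the fact that $\dim\mu=\lim_{r\to0}H(\mu;r)/\log(1/r)$ (a consequence of the exact dimensionality of $\mu$) this yields
\[
\dim\mu\;\ge\;\frac{1}{\chi}\,\liminf_{n\to\infty}\tfrac1n\,H(\nu_n;\abs{\lambda}^n).
\]
Writing $h:=\liminf_{n\to\infty}\tfrac1n H(\nu_n;\abs{\lambda}^n)$, so that $h\le\min\{\chi,H(p)\}$ by the two bounds above, the theorem reduces to the ``no entropy loss'' statement $h=\min\{\chi,H(p)\}$: the finitely supported measures $\nu_n$ must already realise essentially all of their admissible entropy at scale $\abs{\lambda}^n$.

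It remains to prove $h=\min\{\chi,H(p)\}$, and for this I would argue by contradiction; this is the heart of the matter, where the ideas of Breuillard, Rapaport and Varj\'u have to be adapted and extended to the number field $K$. Suppose $h<\min\{\chi,H(p)\}$; then there are $\delta>0$ and arbitrarily large $n$ with $\tfrac1n H(\nu_n;\abs{\lambda}^n)\le\min\{\chi,H(p)\}-\delta$. Viewing $\nu_n=\nu_1*S_{\lambda}\nu_1*\cdots*S_{\lambda^{n-1}}\nu_1$ as an $n$-fold convolution and feeding an entropy gap of this size into Hochman's inverse theorem for the entropy of convolutions on $\bbR$ --- in the quantitative refinements of Breuillard and Varj\'u \cite{BreuillardVarju2019,BreuillardVarju2020} and Rapaport and Varj\'u \cite{RapaportVarju2024} --- one finds that, for a definite proportion of the indices $1\le j\le n$, the measure $\nu_j$ is abnormally concentrated at scale $\abs{\lambda}^j$. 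Since $\nu_j$ is supported on the polynomial values $\{\sum_{k<j}t_{\omega_k}\lambda^k\}$, such concentration says that $\lambda$ approximately satisfies an algebraic relation over $K$. Carrying out the Breuillard--Varj\'u height analysis over $K$ --- working with the Weil height and with Mahler measures relative to \emph{all} the places of $K$ --- one then shows that, for arbitrarily large $j$, $\lambda$ lies within distance $e^{-cj}$ of an algebraic number whose minimal polynomial over $K$ has Mahler measure bounded in terms of $\delta$ alone and is of a restricted, ``Pisot- or Salem-like'' type relative to the places of $K$. Hence $\lambda$ belongs to a certain closed subset of $\bbR$ consisting entirely of algebraic numbers, and the closedness of this set --- the $K$-analogue of Salem's theorem that the Pisot numbers form a closed set --- forces $\lambda$ to be algebraic, contradicting the assumption that $\lambda$ is transcendental. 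I expect the main obstacle to be exactly this last chain: extracting from multi-scale entropy concentration a rigid arithmetic statement, and then proving the requisite closedness. Over a general number field $K$ both steps require tracking simultaneously all the archimedean embeddings of $K$ --- not only the real one in which $\mu$ lives --- together with the non-archimedean places in the height estimates; this is precisely the point at which the rational-translation argument of \cite[Theorem~A.8]{RapaportVarju2024}, where $K=\bbQ$, has to be genuinely extended, and where ``adapting and extending the ideas of Breuillard, Rapaport and Varj\'u'' does the real work.
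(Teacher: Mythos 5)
Your opening reductions agree with the paper's: the algebraic-$\lambda$ case is settled by Hochman's exponential-separation theorem, and for transcendental $\lambda$ (where exact overlaps are automatically absent) the problem becomes the lower bound, which via the convolution structure $\mu=\nu_n* S_{\lambda^n}\mu$ reduces to showing that the discrete approximants $\nu_n$ suffer no asymptotic entropy loss at scale $\abs{\lambda}^n$. You also correctly identify the two main inputs: fast approximation of $\lambda$ by algebraic numbers with dimension drop (Theorem \ref{thm:EAA}), and a Garsia-entropy-versus-Mahler-measure bound carried out over the number field generated by the $t_j$ (Theorem \ref{thm:UBGarsia}), which forces the approximants $\eta$ to have bounded Mahler measure.

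The gap is in your endgame. Your concluding mechanism --- that $\lambda$ lies in a closed set consisting of algebraic numbers, by a ``$K$-analogue of Salem's theorem'', hence is algebraic --- is not a valid argument. The set of algebraic numbers of bounded Mahler measure (or of bounded degree over $K$, or the roots of polynomials in $\bigcup_n\mathcal{P}^n$) is not closed in $(-1,1)$; its closure can contain intervals, so membership of $\lambda$ in that closure says nothing about transcendence. The contradiction is instead quantitative and rests on a multiplicity dichotomy that your proposal omits entirely. First, the approximation supplied by Theorem \ref{thm:EAA} is super-exponential in the degree parameter, $\abs{\lambda-\eta}\le\exp(-n^{1/\varepsilon})$ with $\eta$ a root of some $P\in\mathcal{P}^n$ (hence $\deg\eta\le dn$), not merely $e^{-cj}$ as you write; mere exponential approximation would be useless here. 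Second, Hochman's Theorem \ref{thm:SuperExpClose} produces a polynomial $P\in\mathcal{P}^{n'}$ with $\abs{P(\lambda)}$ super-exponentially small at the matching scale $n'$. Third, a Liouville-type lower bound for nonzero algebraic numbers in $\bbQ(\mathcal{D},\eta)$ (Proposition \ref{prop:algLargeOrder}) then forces $\eta$ to be a root of $P$ of multiplicity at least $M$, where $M$ can be taken arbitrarily large precisely because $M(\eta)$ is bounded. Fourth, this contradicts Proposition \ref{lem:NumRoots} (a Jensen-formula argument), which says that any polynomial with coefficients in the fixed finite set $\mathcal{D}$ has a bounded number of roots, counted with multiplicity, in any disk of radius $1-\varepsilon$. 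Without the third and fourth steps, the bounded-Mahler-measure information cannot be converted into a contradiction with the transcendence of $\lambda$; and the root-separation result over $\mathcal{D}$ (Proposition \ref{prop:SepRoots}), which you do not mention, is what makes Theorem \ref{thm:EAA} itself go through in the algebraic-translation setting.
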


\subsection{ About the proof}

Our proof of Theorem \ref{thm:main} is adapted from   Rapaport and Varj\'u ~ \cite[Appendix A]{RapaportVarju2024} and Varj\'u \cite{Varju2019}.  Below we present two of the main ingredients used in the proof, which are the variants of the corresponding results in \cite{BreuillardVarju2019,BreuillardVarju2020,RapaportVarju2024, Varju2019} in our setting.

Throughout this subsection, let $\Phi=\{\lambda x+t_j\}_{j=1}^m$ be a fixed homogeneous IFS with $0\neq \lambda\in (-1,1)$ and $t_j$ being algebraic numbers, and
let  $ p = (p_{j})_{j=1}^m$  be a fixed probability vector. Write
\begin{equation*}
\label{e-D} \calD = \{ t_{i} - t_{j}\colon 1\leq i, j \leq m \}.
\end{equation*}
For $ n \in \bbN $, let $\calP^{n}$ denote the set of  polynomials in one variable of degree not exceeding $n$ and with coefficients in $\calD$.  That is,
\begin{equation}\label{eq:def-calPn}
	\calP^{n} = \left \{ \sum_{i=0}^{n} a_{i}X^{i} \colon  a_{i}\in \calD \mbox{ for all } 0\leq i\leq n \right \}.
\end{equation}
For $ \alpha > 0 $ and $n\in \bbN$, write
\begin{equation*}
	E^{n}_{\alpha} = \left \{ \eta \in (-1,1)\setminus \{0\} :\dim\mu_{\eta} < \alpha \text{ and } P(\eta) = 0 \text{ for some } 0 \neq P \in \calP^{n} \right \},
\end{equation*}
where $ \mu_{\eta} $ denotes the self-similar measure associated with the IFS $ \{\eta x + t_{j}\}_{j=1}^m $ and the probability vector  $ p $.

  The first main ingredient of our proof is the following theorem, which is a variant of \cite[Theorem 1]{BreuillardVarju2019} and \cite[Theorem A.2]{RapaportVarju2024} for homogeneous IFSs with algebraic translations.  It states roughly that if there is a dimension drop for $\mu_\lambda$, then there are fast algebraic approximations $ (\eta_{n}) $ to $ \lambda $ for infinitely many $n$ such that $\dim \mu_{\eta_{n}}$ also drop.

\begin{theorem}\label{thm:EAA}
	 Suppose that $ \dim \mu_{\lambda} < \min \{ 1, - H(p)/\log \abs{\lambda} \} $. Then for every $ \varepsilon >0 $ and $ n_{0} \geq 1 $, there exist $ n \geq n_{0} $ and $ \eta \in  E^{n}_{\dim \mu_{\lambda} + \varepsilon} $ such that
	\begin{equation*}
		\lvert\lambda -\eta\rvert\leq \exp(-n^{1/\varepsilon}).
	\end{equation*}
\end{theorem}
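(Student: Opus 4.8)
The plan is to adapt the arguments of Breuillard--Varj\'u \cite{BreuillardVarju2019,Varju2019} and of \cite[Appendix A]{RapaportVarju2024}, with the rôle played there by $\bbZ$ (in the case of rational translations) taken over by the finite set $\calD$ and the polynomial spaces $\calP^{n}$. Put $\chi:=-\log\abs{\lambda}$, $\alpha_0:=\dim\mu_\lambda$, $\nu:=\sum_{j=1}^m p_j\delta_{t_j}$, $S_c(x):=cx$, and $\mu_\lambda^{(k)}:=\nu*(S_\lambda)_*\nu*\dots*(S_{\lambda^{k-1}})_*\nu$, so that $\mu_\lambda^{(k)}$ is the image of $p^{\otimes k}$ under $\pi_k\colon w_1\cdots w_k\mapsto\sum_{i=1}^k t_{w_i}\lambda^{i-1}$ and $\mu_\lambda=\mu_\lambda^{(k)}*(S_{\lambda^k})_*\mu_\lambda$. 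Write $\calQ_r$ for the partition of $\bbR$ into intervals of length $r$ and $H(\,\cdot\,;\calQ_r)$ for the associated Shannon entropy. Since $\mu_\lambda$ is exact dimensional \cite{FengHu2009} with Hausdorff dimension equal to its entropy dimension, and truncating the infinite convolution at level $k$ perturbs $H(\,\cdot\,;\calQ_{\abs{\lambda}^k})$ by only $O(1)$, the hypothesis $\alpha_0<\min\{1,H(p)/\chi\}$ supplies $\delta\in(0,\varepsilon/8)$ and $k_0$ with
\begin{equation*}
H\bigl(\mu_\lambda^{(k)};\calQ_{\abs{\lambda}^k}\bigr)\le(\alpha_0+\delta)\,k\chi\le\min\{\,k\chi-\delta k,\ kH(p)-\delta k\,\}\qquad(k\ge k_0).
\end{equation*}

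The first substantive step produces polynomial relations. The bound $H(\mu_\lambda^{(k)};\calQ_{\abs{\lambda}^k})\le kH(p)-\delta k$ says that $\pi_k$ destroys at least $\delta k$ units of the entropy of $p^{\otimes k}$ already at the coarse scale $\abs{\lambda}^k$; a second-moment/pigeonhole argument then yields, for each $k\ge k_0$, a set of pairs $(w,w')$ of total $p^{\otimes k}\times p^{\otimes k}$-mass $\ge e^{-(1-c)kH(p)}$ (some $c=c(\delta)>0$) with $\abs{\pi_k(w)-\pi_k(w')}\le\abs{\lambda}^k$, equivalently nonzero $P=\sum_{i=1}^k(t_{w_i}-t_{w'_i})X^{i-1}\in\calP^{k-1}$ with $\abs{P(\lambda)}\le\abs{\lambda}^k$. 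To pass from this exponential smallness to the super-exponential smallness that is required, I would run the Breuillard--Varj\'u bootstrap: feeding the identity $\mu_\lambda=\mu_\lambda^{(k)}*(S_{\lambda^k})_*\mu_\lambda$, the fact that $(S_{\lambda^k})_*\mu_\lambda$ again has dimension $\alpha_0<1$, and the two displayed bounds into the entropy-growth-under-convolution dichotomy of \cite{BreuillardVarju2019} (at any scale, either entropy grows by a definite amount under convolution, or a factor is close to a bounded union of translates of a measure carried by an arithmetic progression), the ``structured'' alternative is forced along a subsequence and at scales super-exponentially finer than the word length; iterating it yields, for infinitely many $N$, a nonzero $P\in\calP^{N}$ with $\abs{P(\lambda)}\le\exp(-N^{C/\varepsilon})$ for a large absolute constant $C$. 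Both strict inequalities $\alpha_0<1$ and $\alpha_0<H(p)/\chi$ are used here.

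From such a $P$ one extracts $\eta$. If $\abs{P'(\lambda)}$ is not itself super-exponentially small, Newton's method produces a genuine real root $\eta$ of $P$ with $\abs{\lambda-\eta}\lesssim\abs{P(\lambda)}/\abs{P'(\lambda)}$; in general one first removes the repeated near-factors at $\lambda$ (a $\gcd$/resultant reduction), passing to a divisor of $P$ of some degree $n\le N$ with a simple near-root. The genuinely new point, compared with the rational case, is to control — using that $\calD$ lies in a fixed finitely generated subgroup of $\Alg$ of bounded height — the growth of the coefficients under this reduction and a subsequent clearing of denominators, so that the divisor (or a scalar multiple of it) still lies in $\calP^{n}$. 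One arrives at $\eta\in(-1,1)\setminus\{0\}$ (it is super-close to $\lambda$), a nonzero $P\in\calP^{n}$ with $P(\eta)=0$, and $\abs{\lambda-\eta}\le\exp(-n^{1/\varepsilon})$; taking $n_0$, and hence $N$, large forces $n\ge n_0$.

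It remains to check that $\dim\mu_\eta<\alpha_0+\varepsilon$, so that $\eta\in E^{n}_{\alpha_0+\varepsilon}$; this, together with the bootstrap of the previous paragraph, carries all the difficulty. Writing a coefficient $a_i$ of $P$ as $t_{r_i}-t_{s_i}$ exhibits distinct words $w_1=r_0\cdots r_n$ and $w_2=s_0\cdots s_n$ in $\{1,\dots,m\}^{n+1}$ for which the compositions in the $\eta$-IFS satisfy $\varphi_{w_1}=\varphi_{w_2}$ (same linear part $\eta^{n+1}$, and $\varphi_{w_1}(0)-\varphi_{w_2}(0)=P(\eta)=0$), so the $\eta$-IFS has exact overlaps, and its self-similarity propagates this — and the transferred near-overlap — structure to all larger levels. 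Since $\abs{\lambda-\eta}$ is super-exponentially small, for every $k=O(n^{1/\varepsilon})$ one has $W_\infty\bigl(\mu_\lambda^{(k)},\mu_\eta^{(k)}\bigr)\le C\abs{\lambda-\eta}\ll\abs{\lambda}^k$, whence $H(\mu_\eta^{(k)};\calQ_{\abs{\eta}^k})\le(\alpha_0+2\delta)\,k\log(1/\abs{\eta})$ throughout that range; combining this, the above self-similarity, and the algebraicity of $\eta$ (via the Hochman-type dimension theory for algebraic parameters \cite{Hochman2014}) yields $\dim\mu_\eta\le\alpha_0+\varepsilon$. I expect the two real obstacles to be the super-exponential boosting of the previous paragraph — for which the entropy-of-convolution inverse theorems must be made available over the group generated by $\calD$ rather than over $\bbZ$ — and this last dimension transfer, which is delicate precisely because a single overlap relation does not suffice, so that the whole transferred family of near-collisions must be used in tandem with the algebraicity of $\eta$, following \cite{Varju2019,RapaportVarju2024} closely; the height bookkeeping in the extraction step is the remaining place where algebraic, as opposed to rational, translations make a genuine difference.
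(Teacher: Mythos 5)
Your overall roadmap (entropy drop $\Rightarrow$ polynomial near-relations at scale $\abs{\lambda}^{k}$ $\Rightarrow$ Breuillard--Varj\'u bootstrap to super-exponential smallness $\Rightarrow$ extraction of an algebraic approximant $\Rightarrow$ dimension transfer) is the same strategy the paper follows via \cite[Theorem A.2]{RapaportVarju2024}. But the single ingredient that actually makes the passage from rational to algebraic translations work is absent from your sketch, and it is precisely the point you defer as an expected "obstacle": a \emph{separation property for roots}. The paper proves (Proposition \ref{prop:SepRoots}) that distinct roots of polynomials in $\calP^{n}$ are at distance $>2n^{-Mn}$, by converting a polynomial with coefficients in $\calD$ into an integer polynomial of degree $\le dn$ and height $\le C(n+1)^{d}$ through the product over the Galois conjugates of a primitive element of $\bbQ(\calD)$ (Lemma \ref{lem:ToUseMahler}), and then invoking Mahler's discriminant inequality. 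This is what forces \emph{all} nonzero $P\in\calP^{n}$ with $\abs{P(\lambda)}\le r$ to share a single common root $\eta$ near $\lambda$, and forces that root to be real (Proposition \ref{prop-n1}); that common root is what yields $H(\mu_{\eta}^{(n)})\le H(\mu_{\lambda}^{(n)};r)$ and hence, via Hochman's formula for algebraic parameters, $\dim\mu_{\eta}\le\dim\mu_{\lambda}+\varepsilon$. Your dimension-transfer step exhibits only one exact-overlap relation and then appeals to "the whole transferred family of near-collisions"; without the separation property there is no mechanism converting the family of near-collisions of $\mu_{\lambda}^{(n)}$ into exact collisions of $\mu_{\eta}^{(n)}$ for one and the same $\eta$, so the claimed entropy bound, and with it $\eta\in E^{n}_{\dim\mu_{\lambda}+\varepsilon}$, does not follow. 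The same separation property also underlies the converse estimate (Proposition \ref{prop-n2}) that the bootstrap needs.

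Your extraction step is also pointed in a wrong direction. You propose a gcd/resultant reduction to a divisor of $P$ with a simple near-root, followed by "clearing of denominators" so that the divisor "still lies in $\calP^{n}$". But $\calD=\{t_{i}-t_{j}\}$ is a finite set, not a ring, so divisors of elements of $\calP^{N}$ essentially never lie in any $\calP^{n}$, and no rescaling can fix this; the definition of $E^{n}_{\alpha}$ requires $\eta$ to be a root of a polynomial in $\calP^{n}$ itself. Fortunately the reduction is unnecessary: by the Jensen-formula bound (Proposition \ref{lem:NumRoots}) a polynomial in $\calP^{N}$ has at most $k(\varepsilon,\calD)$ nonzero roots of modulus $\le 1-\varepsilon$, so $\abs{P(\lambda)}\le r$ already places one root of $P$ itself within $(2^{N}\varepsilon^{-N}r)^{1/k}$ of $\lambda$ (Corollary \ref{cor-temp1}), and one takes $n=N$. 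In short, the architecture is right, but the two steps you yourself flag as the real difficulties are left unresolved, and the paper's resolution of both is the Mahler-type root separation over $\bbQ(\calD)$ together with the bounded-number-of-small-roots lemma.
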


To present another ingredient of our proof, let us first introduce some notation and definitions. Let $\nu$ be an atomic measure defined by $\nu = \sum_{j=1}^m p_{j}\delta_{t_{j}}$. For $\eta\in \bbC$ with $0<|\eta|<1$, the {\it Garsia entropy $h_{\eta,\nu}$} of $\eta$ and $\nu$ is defined by
\begin{equation}
\label{e-Garsia}
h_{\eta,\nu}=\lim_{n\to \infty}\frac{H(\sum_{j=0}^{n-1}\xi_j \eta^j)}{n},
\end{equation}
where $(\xi_n)_{n=0}^\infty$ is a sequence of independent random variables with common law $\nu$, and $H(\cdot)$ stands for the Shannon entropy of a discrete random variable (see \eqref{eq:def-Shannon}). The above limit always exists,  following from a subadditive argument.

Recall that if $f(X)=a_n\prod_{i=1}^n(X-\beta_i)=\sum_{i=0}^n a_iX^i\in \bbZ[X]$ is the minimal polynomial of an algebraic number $\beta$, then the {\it Mahler measure} of $\beta$ is defined as
\begin{equation}
\label{e-Mahler}
M(\beta)=|a_n|\prod_{i=1}^n\max\{1, |\beta_i|\}.
\end{equation}

The following theorem is a variant of \cite[Theorem 9]{Varju2019} for homogeneous IFSs with algebraic translations, which gives a natural relation between the Garsia entropy and the Mahler measure. We remark that \cite[Theorem 9]{Varju2019} directly follows from \cite[Proposition 13]{BreuillardVarju2020}.


\begin{theorem} \label{thm:UBGarsia}    For  any $h\in (0,  H(p)) $, there is a positive number $ M  $ depending on $h$,  $ t_1,\ldots, t_m$ and  $p_1,\ldots, p_m$, such that $ h_{\eta, \nu} \geq h $ for every  algebraic number $\eta\in (-1,1)$ with $ M(\eta)\geq M $.
\end{theorem}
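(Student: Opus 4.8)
The plan is to reduce the statement to the case of a Bernoulli-type convolution treated in \cite[Proposition 13]{BreuillardVarju2020}, by packaging the atomic measure $\nu = \sum_j p_j \delta_{t_j}$ appropriately. First I would recall the structure of the argument behind \cite[Theorem 9]{Varju2019}: there one has $\nu = \tfrac12\delta_0 + \tfrac12\delta_1$, and the point is that the random power sum $\sum_{j=0}^{n-1}\xi_j\eta^j$ takes values in a set of controlled arithmetic complexity, so that when the Mahler measure $M(\eta)$ is large, many of these values must be distinct, forcing the entropy $H(\sum_{j=0}^{n-1}\xi_j\eta^j)$ to grow linearly with slope close to $H(p)=\log 2$. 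The key quantitative input is a lower bound for the separation between distinct values $\sum a_i\eta^i$ with $a_i$ ranging over a fixed finite set: if $P(X)=\sum a_iX^i$ is a nonzero polynomial of degree $\le n$ with coefficients in a fixed finite set of algebraic numbers, then $|P(\eta)|$ cannot be too small compared to a negative power of $M(\eta)$, by a Mahler-measure / height estimate (a version of Liouville's inequality). This is exactly where the algebraicity of $\eta$ and the largeness of $M(\eta)$ enter.

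In our setting the only change is that the differences $\xi_j - \xi_j'$ (for two independent samples) lie in the finite set $\calD = \{t_i - t_j\}$, whose elements are algebraic rather than rational. So the plan is: fix $h \in (0, H(p))$ and choose $\delta>0$ small. Let $(\xi_n)$ and $(\xi_n')$ be two independent copies of the i.i.d.\ $\nu$-sequence, and consider the random polynomial $Q = \sum_{j=0}^{n-1}(\xi_j - \xi_j')X^j \in \calP^{n-1}$, whose coefficients lie in $\calD$. By the height/Mahler estimate applied to the fixed finite coefficient set $\calD$, there is a constant $C = C(\calD) > 1$ such that any nonzero $Q\in\calP^{n-1}$ satisfies $|Q(\eta)| \ge C^{-1} M(\eta)^{-C}$ (using that $\eta$ itself is algebraic with $|\eta|<1$, so its conjugates contribute boundedly, or more simply invoking \cite[Proposition 13]{BreuillardVarju2020} or a Garsia-type separation lemma). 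Consequently, two realizations $\sum\xi_j\eta^j$ and $\sum\xi_j'\eta^j$ that are unequal as formal coefficient sequences give distinct (and well-separated) values, provided $n$ is large relative to $\log M(\eta)$. A standard Cauchy–Schwarz / collision-probability argument then converts this separation into the lower bound $H(\sum_{j=0}^{n-1}\xi_j\eta^j) \ge n H(p) - o(n) - O(\log M(\eta))$; dividing by $n$ and letting $n\to\infty$ gives $h_{\eta,\nu} \ge H(p) - \delta$, which is $\ge h$ once $\delta$ is small. Tracking the dependence of constants, one sees $M$ depends only on $h$, on $\calD$ (hence on the $t_j$), and on $p$ (through $H(p)$ and the $o(n)$ term), as claimed.

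The main obstacle I anticipate is making the Mahler-measure separation estimate robust to algebraic, rather than integer, coefficients: for rational $t_j$ one can clear denominators and work inside $\bbZ[X]$, whereas here $\calD$ generates a number field $\bbK$ and one must run the Liouville-type inequality over $\bbK$, controlling all archimedean and non-archimedean places of $\bbK$ simultaneously and checking that the resulting constant still depends only on $\calD$ and not on $\eta$. Concretely, I would fix a number field $\bbK$ containing all $t_i - t_j$, let $D$ be a common denominator so that $D\cdot\calD \subset \scrO_\bbK$, and then apply the product formula over $\bbK(\eta)$ to the algebraic number $Q(\eta)$: its norm is a nonzero algebraic integer up to the bounded factor $D^n$, so $\prod_{v}|Q(\eta)|_v \ge 1$, and isolating the place above our fixed embedding yields $|Q(\eta)| \ge (\text{stuff bounded in terms of }\calD)^{-n}\cdot M(\eta)^{-[\bbK:\bbQ]}$ after estimating the contributions of the conjugates of $\eta$. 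Once this separation lemma is in hand with the right uniformity, the entropy argument is essentially the routine one from \cite{Varju2019,BreuillardVarju2020}, and I would cite those sources for the conversion from separation to entropy lower bound rather than reproducing it.
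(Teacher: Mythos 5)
Your proposal breaks down at its central step, and the failure is not a technicality but the whole content of the theorem. You assert that for any nonzero $Q\in\calP^{n-1}$ one has $\abs{Q(\eta)}\geq C^{-1}M(\eta)^{-C}$, so that formally distinct coefficient sequences give distinct values of $\sum_j\xi_j\eta^j$. But a nonzero polynomial $Q$ with coefficients in $\calD$ can perfectly well vanish at the algebraic number $\eta$; no Liouville or product-formula argument rules this out, since those only bound $\abs{Q(\eta)}$ from below \emph{when it is nonzero}. If $\eta$ is a root of no polynomial in $\bigcup_n\calP^n$, then all values are distinct, $h_{\eta,\nu}=H(p)$ exactly, and the theorem is vacuous for that $\eta$ (the paper disposes of this case in one line). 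The entire difficulty is the complementary case, where exact collisions occur; there your collision-counting argument gives nothing, because for Shannon entropy only equality versus inequality of values matters, so separation estimates cannot produce the quantitative dependence on $M(\eta)$ that the statement requires. (Two further inaccuracies: the Liouville bound for nonvanishing values degrades like $M(\eta)^{-Cn}$ with the degree $n$ of $Q$, not $M(\eta)^{-C}$; and \cite[Proposition 13]{BreuillardVarju2020} is not a separation lemma, so citing it for that purpose is misplaced.)

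What is actually needed, and what the paper does, is the Breuillard--Varj\'u entropy inequality $h_{\eta,\nu}\geq\Phi_{\nu}\bigl(\wt{M}_{\bbK}(\eta)\bigr)$, where $\wt{M}_{\bbK}(\eta)$ is the reciprocal product of the moduli of the small conjugates of $\eta$ over a field $\bbK$ containing the translations; this inequality is proved by an entropy-increment argument for a random matrix series whose matrix has the small conjugates as eigenvalues, and it is exactly what survives the presence of exact collisions. Since Breuillard and Varj\'u established this only for $\bbK=\bbQ$, the paper must extend it to arbitrary subfields $\bbK\subset\bbC$ (Proposition \ref{prop:LB_Phi}), and then run a Galois argument: choose $\theta$ with $\bbQ(\theta)=\bbQ(\calD)$, conjugate the data by the embeddings $\sigma_k$, show $\prod_k\wt{M}_{\bbQ(\theta_k)}(\eta_k)\geq\wt{M}_{\bbQ}(\eta)\geq M(\eta)/C$ (the last step via Vieta applied to the constant coefficient of the minimal polynomial), pick the best index $k_0$, and verify $h_{\eta,\nu}=h_{\eta_{k_0},\wt{\nu}}$. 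None of these ingredients appears in your outline, so the proposal as written does not prove the theorem.
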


By assuming Theorems \ref{thm:EAA} and \ref{thm:UBGarsia}, we can prove Theorem \ref{thm:main} by simply following  the arguments in \cite[Theorem A.8]{RapaportVarju2024} with slight modifications. Below we briefly introduce the ideas for the proofs of Theorems \ref{thm:EAA} and \ref{thm:UBGarsia}. The details will be given in Sections \ref{sec:pf-EAA} and \ref{sec:UBGarsia}.

To prove Theorem \ref{thm:EAA}, we first establish a separation result (see Proposition \ref{prop:SepRoots}), stating that  if $\lambda_1,\lambda_2$ are distinct algebraic numbers each of which is a root of a polynomial in $\mathcal P^n$, then $$|\lambda_1-\lambda_2|\geq 2 n^{-Mn} \quad \mbox{if }n\geq N,$$
 where $M,N$ are two positive constants depending on $t_1,\ldots, t_m$. This is a slight generalization of \cite[Lemma 4.1]{RapaportVarju2024} (which is due to Mahler \cite{Mahler1964},  giving a similar lower bound $2 n^{-5n}$ for the distance between roots of integer polynomials with bounded coefficients). Thanks to this separation result, Theorem \ref{thm:EAA} can be proved by  following the exact proof of \cite[Theorem A.2]{RapaportVarju2024} except for some minor modifications of the involved auxiliary lemmas and propositions.

To prove Theorem  \ref{thm:UBGarsia}, we may assume that $\eta$ is a root of a polynomial in $\bigcup_{n=1}^\infty {\mathcal P}^n$; otherwise $h_{\eta,\nu}=H(p)$ and there is nothing to prove. Next we choose a real algebraic integer $\theta$ such that $${\bbQ}(\theta)=\bbQ({\mathcal D}),$$ where for  $Z\subset \bbC$, $\bbQ(Z)$ stands for the smallest subfield of $\bbC$ containing both $\bbQ$ and  $Z$.  The existence of such $\theta$ follows from Lemma \ref{lem-extension}. Set $d=\deg(\theta)$ and let $\theta_i$, $i=1, \ldots, d$, be the algebraic conjugates of $\theta$ over $\bbQ$. We manage to show that there is a constant $C$ (depending only on $\mathcal D$ and $\theta$) such that
\begin{equation}
\label{e-CM}
\prod_{i=1}^d \widetilde{M}_{\bbQ(\theta_i)}(\eta_i)\geq C M(\eta),
\end{equation}
where $\eta_i$ ($i=1,\ldots, d$) are some suitably chosen algebraic numbers with
$$
[\bbQ(\theta_i,\eta_i):\bbQ(\theta_i)]<\infty,
$$
and $\widetilde{M}_{\bbQ(\theta_i)}(\eta_i)$ are defined as
$$
\widetilde{M}_{\bbQ(\theta_i)}(\eta_i)=\prod_{j} \frac{1}{\min\{1,|\beta_j|\}},
$$
where the product is taken over all algebraic conjugates $\beta_j$ of $\eta_i$ over the field $\bbQ(\theta_i)$, including $\eta_i$ itself. Due to \eqref{e-CM}, we may choose $k_0\in \{1,\ldots, d\}$ such that
\begin{equation}
\label{e-CM2}
 \widetilde{M}_{\bbQ(\theta_{k_0})}(\eta_{k_0})\geq C^{1/d} M(\eta)^{1/d}.
\end{equation}
Assume that  $M(\eta)$ is large enough (saying, greater than $C^{-1/d}$). Then the left hand side of \eqref{e-CM2} is larger than $1$, so replacing $\eta_{k_0}$ by one of its algebraic conjugates over $\bbQ(\theta_{k_0})$ if necessary, we may assume that $$|\eta_{k_0}|<1.$$
Notice that $\eta_{k_0}$ might take value in $\bbC\setminus \bbR$.

To conclude Theorem  \ref{thm:UBGarsia}, we manage to extend a deep result of Breuillard and Varj\'u (see \cite[Proposition 13] {BreuillardVarju2020}) on the relation between the  Garsia entropy and the Mahler measure from homogeneous IFSs on $\bbR$ with rational translations to  homogeneous IFSs on $\bbC$ with complex algebraic translations; see Proposition \ref{prop:LB_Phi}. From this result, we are able to derive that
$$
h_{\eta, \nu}\geq \Phi_{\nu'}\left(\widetilde{M}_{\bbQ(\theta_{k_0})}(\eta_{k_0})\right)
$$
for a certain atomic probability measure $\nu'$  on $\bbC$, where $\Phi_{\nu'}$ is defined as in \eqref{eq:def-Phi_nu} (in which $\nu$ is replaced by $\nu'$).  Since $\Phi_{\nu'}(a)$ tends to $H(p)$ as $a\to \infty$, the above inequality and \eqref{e-CM2} yield  Theorem  \ref{thm:UBGarsia}.

We actually prove a more general version of Theorem \ref{thm:UBGarsia} in which $t_1,\ldots, t_m$ and $\eta$ are assumed to be complex algebraic numbers (see Theorem \ref{thm:UBGarsiaC}), which  might be helpful for the study of  homogeneous self-similar measures on  $\bbR^2$ or higher dimensional spaces.



\subsection{Structure of the article}

In Section \ref{sec:DistRoots} we collect some results about  roots of  polynomials with integer or algebraic coefficients, which we will use at various places in the paper, in particular in the proofs of Theorems \ref{thm:main} and \ref{thm:EAA}.  In Section \ref{sec:entropies}, we review two notions of entropy and  present their basic properties.  In Section \ref{sec:pf-main} we prove  Theorem \ref{thm:main} by assuming Theorems \ref{thm:EAA} and \ref{thm:UBGarsia}.  The proofs of Theorems \ref{thm:EAA} and \ref{thm:UBGarsia} are respectively given in Sections \ref{sec:pf-EAA} and \ref{sec:UBGarsia}.

\section{Distribution of roots of polynomials}\label{sec:DistRoots}

In this section, we collect some properties about  roots of polynomials  with  coefficients in a given finite set of algebraic numbers,  which we will mainly use in the proofs of Theorems \ref{thm:main} and \ref{thm:EAA}. Many of them are standard, and the others are the extensions of  the corresponding results in  \cite{BreuillardVarju2019, RapaportVarju2024} about integer polynomials.

Throughout this section, let $\mathcal D$ be a fixed finite set of algebraic numbers in $\bbC$. For $n\geq 1$,  let $\mathcal P^n$ denote the collection of polynomials in $X$ of degree not exceeding $n$ and with coefficients in $\mathcal D$. That is,
\begin{equation}\label{eq:def-calPn1}
	\calP^{n} = \left \{ \sum_{i=0}^{n} a_{i}X^{i} \colon  a_{i}\in \calD \mbox{ for all } 0\leq i\leq n \right \}.
\end{equation}

\subsection{Polynomials, Mahler measure and height}


We first introduce  some standard notation and definitions. For   $\calS \subset \bbC$, by $\calS[X]$ and $ \calS[X_{1}, \ldots, X_{k}]$ we respectively denote the sets of  the univariate polynomials  in variable $X$ and the multivariate polynomials in variables $X_{1}, \ldots, X_{k} $ with coefficients in $\calS$.

For $ \alpha, \alpha_{1}, \ldots, \alpha_{k} \in \bbC $, we write $$ \calS[\alpha] = \left \{ P(\alpha)\colon P\in \calS[X] \right \}$$ and $$ \calS[\alpha_{1}, \ldots, \alpha_{k}] = \left \{ P(\alpha_{1}, \ldots, \alpha_{k}) \colon P \in \calS[X_{1}, \ldots, X_{k}]\right \},$$ where $P(\alpha) $ or $P(\alpha_{1}, \ldots,\alpha_{k})$ denotes the evaluation of a polynomial $P$ at $ \alpha$ or $(\alpha_{1}, \ldots, \alpha_{k})$. Write $ \calS[\calA] = \calS[\alpha_{1}, \ldots, \alpha_{k}] $ if $ \calA = \{\alpha_{1}, \ldots, \alpha_{k}\} $. We adapt the notation with $ \calS $ replaced by $ \bbQ, \bbZ $,  $\bbC$ or any field $\bbK \subset \bbC$. For $ \calA \subset \bbC $, let $ \bbQ(\calA) $ denote the smallest field containing $ \bbQ $ and $ \calA $, and let $ \bbQ(\alpha) $ be the smallest field containing  $ \bbQ $ and $ \alpha \in \bbC $.

For $ f = \sum_{i=0}^{n} a_{i}X^{i} \in \bbC[X] $, the degree of $ f $ is denoted as $ \deg f $, and we write
\begin{equation*}
	\ell_q(f) = \left (\sum_{i=0}^{n}\abs{a_{i}}^{q} \right )^{1/q} \; \mbox{ for }1\leq q<\infty,\quad \mbox{ and }\;\ell_{\infty}(f) = \max_{0\leq i\leq n} \abs{a_{i}}.
\end{equation*}
    Similar notations are used for multivariate polynomials.
\begin{definition}[Mahler measure]\label{def:MahlerMeasure}
	The \textit{Mahler measure} of $ f = \sum_{i=0}^{n} a_{i}X^{i} \in \bbC[X] $ is defined as
	\begin{equation}\label{eq:def-Mahler}
		M(f) := \abs{a_{n}} \prod_{i=1}^{n} \max\left\{1, \abs{\alpha_{i}} \right\},
	\end{equation}
	where $ \alpha_{1}, \ldots, \alpha_{n} $ are the roots of $ f $. For an algebraic number $ \alpha\in \bbC$, the Mahler measure of $ \alpha$ is defined by
	$ M(\alpha) := M(g) $, where $ g $ is the minimal polynomial of $ \alpha $ in $ \bbZ[X] $ with coprime coefficients.
\end{definition}
By the above definition, $M(\alpha) \geq 1$ for every  algebraic number $ \alpha$. Following \cite[Chapter 14]{Masser2016}, we define the \textit{height}  of an algebraic number $ \alpha \in \bbC$ as
\begin{equation}
\label{e-height}
	H(\alpha) = M(\alpha)^{1/\deg \alpha},
\end{equation}
where $ \deg \alpha $ stands for  the degree of  $ \alpha $ over $ \bbQ $.  Clearly $H(\alpha)\geq 1$.

It is immediate from \eqref{eq:def-Mahler} that
\begin{equation*}\label{eq:MahlerMulti}
	M(fg) = M(f)M(g) \mFor f, g \in \bbC[X].
\end{equation*}
Below we list  more properties about the Mahler measure and height.
\begin{lemma}[{\cite[Lemma 1.6.7]{BombieriGubler2006}}]\label{lem:MahlerNorms}
	Let $ f \in \bbC[X] $ with $ \deg f = n $. Then $ M(f) \leq \ell_1(f) $. Moreover,
	\begin{equation*}\label{eq:MahlerNorms}
		\binom{n}{\fl{n/2}}^{-1} \ell_\infty(f) \leq M(f) \leq \ell_2(f) \leq \sqrt{n+1} \, \ell_\infty(f),
	\end{equation*}
where 	$\fl{x}$ stands for the integral part of $x$.
\end{lemma}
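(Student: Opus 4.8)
The plan is to deduce everything from \emph{Jensen's formula}, which expresses the Mahler measure as a logarithmic average over the unit circle:
\begin{equation*}
	\log M(f) = \int_{0}^{1} \log\bigl|f(e^{2\pi i t})\bigr|\, dt
\end{equation*}
for every $f \in \bbC[X]$. Granting this, the two upper bounds on $M(f)$ follow from concavity of $\log$, and the lower bound follows from Vieta's formulas.

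First I would prove $M(f) \le \ell_1(f)$ and $M(f) \le \ell_2(f)$ simultaneously. Since $dt$ is a probability measure on $[0,1]$ and $\log$ is concave, Jensen's inequality yields, for $q \in \{1,2\}$,
\begin{equation*}
	\log M(f) = \frac{1}{q}\int_{0}^{1} \log\bigl|f(e^{2\pi i t})\bigr|^{q}\, dt \;\le\; \frac{1}{q}\log\int_{0}^{1} \bigl|f(e^{2\pi i t})\bigr|^{q}\, dt .
\end{equation*}
For $q=1$ one bounds $\int_{0}^{1} |f(e^{2\pi i t})|\, dt \le \sum_{i=0}^{n} |a_{i}| = \ell_1(f)$ by the triangle inequality; for $q=2$ one invokes Parseval's identity $\int_{0}^{1} |f(e^{2\pi i t})|^{2}\, dt = \sum_{i=0}^{n} |a_{i}|^{2} = \ell_2(f)^{2}$. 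Exponentiating gives the two inequalities. The last estimate $\ell_2(f) \le \sqrt{n+1}\,\ell_\infty(f)$ is just $\ell_2(f)^2 = \sum_{i=0}^{n} |a_i|^2 \le (n+1)\,\ell_\infty(f)^2$.

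Next, for the lower bound $\binom{n}{\fl{n/2}}^{-1}\ell_\infty(f) \le M(f)$, write $f = a_{n}\prod_{i=1}^{n}(X-\alpha_{i})$ and use Vieta's formulas: the coefficient of $X^{n-k}$ equals $(-1)^{k} a_{n}\, e_{k}(\alpha_{1},\dots,\alpha_{n})$, where $e_{k}$ is the $k$-th elementary symmetric polynomial. Since each $|\alpha_i| \le \max\{1,|\alpha_i|\}$ and all the factors $\max\{1,|\alpha_i|\}$ are $\ge 1$,
\begin{equation*}
	\bigl|e_{k}(\alpha_{1},\dots,\alpha_{n})\bigr| \le \sum_{|S| = k}\ \prod_{i\in S}|\alpha_{i}| \le \binom{n}{k}\prod_{i=1}^{n}\max\{1,|\alpha_{i}|\},
\end{equation*}
so $|a_{n-k}| \le \binom{n}{k}\, M(f)$ for every $0 \le k \le n$. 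Taking the maximum over $k$ and using $\binom{n}{k} \le \binom{n}{\fl{n/2}}$ gives $\ell_\infty(f) \le \binom{n}{\fl{n/2}}\, M(f)$, as required.

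The only delicate point is justifying Jensen's formula when $f$ has zeros on the unit circle. This reduces, via multiplicativity of $M$ over linear factors and additivity of the integral, to the single identity $\int_{0}^{1}\log|e^{2\pi i t} - \beta|\,dt = \log\max\{1,|\beta|\}$, which one checks for $|\beta| \ne 1$ by expanding $\log(1-\beta w)$ (or $\log(1-w/\beta)$) in a power series and integrating term by term, and for $|\beta| = 1$ from the classical evaluation $\int_{0}^{1}\log|2\sin(\pi t)|\,dt = 0$. No genuinely hard step is involved; the statement is classical, and the bookkeeping around the boundary case is the main (minor) obstacle.
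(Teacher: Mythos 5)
Your proof is correct and is essentially the standard argument (Jensen's formula for the Mahler measure, Jensen's inequality together with the triangle inequality and Parseval for the upper bounds, and the elementary symmetric function estimate for the lower bound); the paper itself gives no proof, quoting the result directly from \cite[Lemma 1.6.7]{BombieriGubler2006}, where the same approach is used. All steps, including the boundary case of Jensen's formula for zeros on the unit circle, are handled adequately.
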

\begin{lemma}[{\cite[Equation 4]{Mahler1960}}]\label{lem:Mahler-LB}
	Let $ f \in \bbC[X] $ with $ \deg f = n $. Then
	\begin{equation*}\label{eq:NormMahler}
		\ell_1(f)\leq 2^{n} M(f).
	\end{equation*}
\end{lemma}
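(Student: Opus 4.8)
Looking at the statement of Lemma~\ref{lem:Mahler-LB}, I need to prove that for $f \in \bbC[X]$ with $\deg f = n$, one has $\ell_1(f) \leq 2^n M(f)$.

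The plan is to reduce the general case to the case of a factored polynomial by exploiting multiplicativity of the Mahler measure. First I would write $f(X) = a_n \prod_{i=1}^n (X - \alpha_i)$, where $a_n$ is the leading coefficient and $\alpha_1, \dots, \alpha_n$ are the roots (with multiplicity), so that $M(f) = |a_n| \prod_{i=1}^n \max\{1, |\alpha_i|\}$. By scaling, it suffices to treat the monic case: write $f = a_n g$ with $g = \prod_{i=1}^n (X - \alpha_i)$ monic, and note $\ell_1(f) = |a_n|\, \ell_1(g)$ while $M(f) = |a_n|\, M(g)$, so the claimed inequality for $f$ is equivalent to $\ell_1(g) \leq 2^n M(g)$.

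Next I would expand $g(X) = \prod_{i=1}^n (X - \alpha_i) = \sum_{k=0}^n e_k X^k$, where each coefficient $e_k$ (up to sign) is an elementary symmetric-type expression: $|e_k| \leq \sum_{|S| = n-k} \prod_{i \in S} |\alpha_i| \leq \binom{n}{n-k} \prod_{i=1}^n \max\{1, |\alpha_i|\} = \binom{n}{k} M(g)$. Here the key estimate is that each product $\prod_{i\in S}|\alpha_i|$ over a subset $S$ of size $n-k$ is bounded by $\prod_{i=1}^n \max\{1,|\alpha_i|\}$, since enlarging the product and replacing each factor by its max with $1$ only increases it. Summing over $k$ from $0$ to $n$ then gives
\begin{equation*}
  \ell_1(g) = \sum_{k=0}^n |e_k| \leq \left(\sum_{k=0}^n \binom{n}{k}\right) M(g) = 2^n M(g),
\end{equation*}
which is exactly what I want. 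Combining with the scaling reduction yields $\ell_1(f) \leq 2^n M(f)$ in general.

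I do not anticipate a genuine obstacle here; the only point requiring a little care is handling the leading coefficient correctly (both $\ell_1$ and $M$ scale by $|a_n|$, so the reduction to the monic case is clean) and being careful that the bound on $|e_k|$ uses $\max\{1,|\alpha_i|\}$ rather than $|\alpha_i|$ for \emph{every} index, not just those in $S$. Alternatively, one could cite this directly as \cite[Equation 4]{Mahler1960} as the paper does, but the self-contained argument above via the binomial theorem is short enough to include.
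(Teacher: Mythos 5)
Your argument is correct. The paper itself gives no proof of this lemma---it is quoted directly from Mahler's 1960 paper as Equation 4---and your self-contained derivation is precisely the standard argument behind that equation: expanding $f=a_n\prod_{i=1}^n(X-\alpha_i)$ gives the coefficient bounds $|a_k|\leq\binom{n}{k}\,|a_n|\prod_{i=1}^n\max\{1,|\alpha_i|\}=\binom{n}{k}M(f)$, and summing over $k$ via $\sum_{k=0}^n\binom{n}{k}=2^n$ yields the claim. The reduction to the monic case and the care you take in bounding each $\prod_{i\in S}|\alpha_i|$ by the full product of the $\max\{1,|\alpha_i|\}$ are both handled correctly, so the proof is complete as written.
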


\begin{lemma}[{\cite[Proposition 14.7]{Masser2016}}] \label{lem:PolyHeights}
	Let $ f \in \bbZ[X_{1}, \ldots, X_{k}] $ such that the partial degree of $ f $ in $ X_{j} $ is at most  $ L_{j} $ for $ 1 \leq j \leq k $. Let $ \alpha_{1}, \ldots, \alpha_{k} $ be algebraic numbers. Then
	\begin{equation*}\label{eq:PolyHeights}
		H(f(\alpha_{1}, \ldots, \alpha_{k})) \leq \ell_1(f) \prod_{j=1}^{k} H(\alpha_{j})^{L_{j}}.
	\end{equation*}
\end{lemma}

Let $ [\bbK_{1} : \bbK_{2}]$ denote the degree of a field extension $ \bbK_{2} / \bbK_{1} $, that is, the dimension of $ \bbK_{1}$ as a vector space over $\bbK_{2}$ (\cite{Morandi1996}).

\begin{lemma}[{\cite[Proposition 14.13]{Masser2016}}] \label{lem:PolyVal-LB}
	Let $ \bbK \subset \bbC $ be an extension field of $ \bbQ $ with $ k = [\bbK : \bbQ ] < \infty $. If $ \alpha \neq 0 $  is in $\bbK$, then $ \abs{\alpha} \geq H(\alpha)^{-k} $.
\end{lemma}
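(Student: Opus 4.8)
The plan is to reduce the asserted bound to the single elementary fact that the constant term of the primitive integer minimal polynomial of a nonzero algebraic number is a nonzero integer, and then compare exponents using that $\deg\alpha$ divides $k$.

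First I would set $d = \deg \alpha = [\bbQ(\alpha):\bbQ]$. Since $\alpha\in\bbK$ we have the tower $\bbQ\subseteq\bbQ(\alpha)\subseteq\bbK$, so $d\mid k$ by multiplicativity of degrees of field extensions; in particular $d\le k$. Let
\[
	g(X) = a_{d}X^{d} + \cdots + a_{1}X + a_{0} = a_{d}\prod_{i=1}^{d}(X-\alpha_{i}) \in \bbZ[X]
\]
be the minimal polynomial of $\alpha$ in $\bbZ[X]$ with coprime coefficients, where $\alpha_{1}=\alpha$ and $\alpha_{2},\ldots,\alpha_{d}$ are the remaining roots of $g$ (the conjugates of $\alpha$ over $\bbQ$); all of them are nonzero, since $\alpha\ne 0$.

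Next I would observe that $a_{0}=g(0)=a_{d}(-1)^{d}\alpha_{1}\cdots\alpha_{d}$ is a nonzero integer, whence $\abs{a_{0}}=\abs{a_{d}}\,\abs{\alpha_{1}}\prod_{i=2}^{d}\abs{\alpha_{i}}\ge 1$. Dividing through and then bounding each $\abs{\alpha_{i}}$ by $\max\{1,\abs{\alpha_{i}}\}$ (for $2\le i\le d$), and finally inserting the extra factor $\max\{1,\abs{\alpha_{1}}\}\ge 1$, gives
\[
	\abs{\alpha} = \abs{\alpha_{1}} \ge \frac{1}{\abs{a_{d}}\prod_{i=2}^{d}\abs{\alpha_{i}}} \ge \frac{1}{\abs{a_{d}}\prod_{i=2}^{d}\max\{1,\abs{\alpha_{i}}\}} \ge \frac{1}{\abs{a_{d}}\prod_{i=1}^{d}\max\{1,\abs{\alpha_{i}}\}} = \frac{1}{M(\alpha)}.
\]
By the definition \eqref{e-height} of the height, $M(\alpha)=H(\alpha)^{d}$, so $\abs{\alpha}\ge H(\alpha)^{-d}$; since $H(\alpha)\ge 1$ and $d\le k$, this yields $\abs{\alpha}\ge H(\alpha)^{-d}\ge H(\alpha)^{-k}$, as required.

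I do not expect any genuine obstacle: the argument only unwinds the definitions of Mahler measure and height, the two substantive inputs being the tower law (to get $d\le k$) and the non-vanishing of $g(0)$ (immediate from $\alpha\ne 0$). One could alternatively deduce the inequality from the product formula for the absolute Weil height, but the constant-term computation above is self-contained and in fact slightly stronger, giving $\abs{\alpha}\ge M(\alpha)^{-1}=H(\alpha)^{-d}$.
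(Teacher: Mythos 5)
Your proof is correct. The paper does not prove this lemma at all --- it is quoted verbatim from Masser's book (\cite[Proposition 14.13]{Masser2016}) --- so there is no in-paper argument to compare against; your constant-term computation is the standard elementary Liouville-type bound, and every step checks out: $d=\deg\alpha$ divides $k$ by the tower law, $g(0)=a_0$ is a nonzero integer because $g$ is irreducible and $\alpha\neq 0$, and Vieta plus $\abs{\alpha_i}\leq\max\{1,\abs{\alpha_i}\}$ gives $\abs{\alpha}\geq M(\alpha)^{-1}=H(\alpha)^{-d}\geq H(\alpha)^{-k}$. As you note, this actually yields the sharper inequality $\abs{\alpha}\geq H(\alpha)^{-\deg\alpha}$; the form with exponent $k$ is just the weakening that is convenient when one controls the ambient field $\bbK$ rather than $\deg\alpha$ itself, which is exactly how the lemma is used in \eqref{eq:PolyVal-LB}.
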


\begin{lemma}[{\cite[Corollary 5.8]{Morandi1996}}]
\label{lem-extension}
Let $\alpha_1,\ldots, \alpha_k$ be algebraic numbers. Then there exists an algebraic integer $\theta$ such that $\bbQ(\alpha_1,\ldots, \alpha_k)=\bbQ(\theta)$.
\end{lemma}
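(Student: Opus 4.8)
The plan is to reduce to the case of a single generator via the primitive element theorem, and then to ``clear denominators'' so that this generator becomes an algebraic integer generating the same field.

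\textbf{Step 1 (primitive element).} Since $\bbQ$ has characteristic zero, the finite extension $\bbQ(\alpha_1,\ldots,\alpha_k)/\bbQ$ is separable, hence simple: there is an algebraic number $\gamma$ with $\bbQ(\alpha_1,\ldots,\alpha_k)=\bbQ(\gamma)$. If one prefers a self-contained argument, this step can be proved by induction on $k$, reducing at each stage to the two-generator case, where $\bbQ(\beta,\delta)=\bbQ(\beta+c\delta)$ for all but finitely many $c\in\bbQ$; the elementary proof uses that $\beta$ and $\delta$ each have only finitely many conjugates over $\bbQ$.

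\textbf{Step 2 (making the generator an algebraic integer).} Pick $f(X)=a_nX^n+a_{n-1}X^{n-1}+\cdots+a_0\in\bbZ[X]$ with $a_n\neq 0$ and $f(\gamma)=0$. Put $\theta=a_n\gamma$. Multiplying the relation $f(\gamma)=0$ by $a_n^{n-1}$ gives
$$
\theta^n+a_{n-1}\theta^{n-1}+a_{n-2}a_n\theta^{n-2}+a_{n-3}a_n^2\theta^{n-3}+\cdots+a_0a_n^{n-1}=0,
$$
a monic polynomial relation with integer coefficients, so $\theta$ is an algebraic integer. Since $a_n$ is a nonzero rational, $\bbQ(\theta)=\bbQ(a_n\gamma)=\bbQ(\gamma)=\bbQ(\alpha_1,\ldots,\alpha_k)$, and $\theta$ has the desired properties.

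\textbf{Main obstacle.} Honestly there is none of substance: the statement is classical, and indeed the paper simply cites \cite{Morandi1996}. The only point needing care is the appeal to the primitive element theorem in Step 1; Step 2 is entirely routine. If one wants to avoid invoking that theorem as a black box, the inductive two-generator argument sketched above suffices and is short.
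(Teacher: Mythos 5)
Your proof is correct and is essentially the argument the paper has in mind: it cites the primitive element theorem (Morandi, Corollary 5.8) to get a single algebraic generator $\gamma$ and then rescales $\gamma$ by a nonzero integer (your choice $\theta=a_n\gamma$ with the standard monic relation) to make it an algebraic integer without changing the field. No gaps; the computation in Step 2 checks out, and Step 1 is a legitimate appeal to a classical theorem in characteristic zero.
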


\subsection{The distribution of roots of polynomials with coefficients in $\mathcal D$}

   Recall that $\mathcal D$ is a fixed finite set of algebraic numbers, and $\mathcal P^n$, $n\in \bbN$, are defined as in \eqref{eq:def-calPn1}.

The following proposition 
 is a slight extension of  \cite[Lemma 4.2]{RapaportVarju2024}  (see also, \cite{BEAUCOUPEtAl1998} and \cite[Lemma 26]{BreuillardVarju2019}).

\begin{proposition}\label{lem:NumRoots}
	For $ \varepsilon>0 $, there exists $ k = k(\varepsilon,\calD) > 0 $ such that every polynomial $ P $ in $\Union_{n=1}^{\infty} \calP^{n} $ has at most $ k $ nonzero roots of modulus at most $ 1-\varepsilon $.
\end{proposition}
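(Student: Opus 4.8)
The plan is to reduce the statement to a Jensen's-formula count of zeros in a disk whose radius is strictly between $1-\varepsilon$ and $1$. First I would dispose of the trivial cases: if $\calD\subseteq\{0\}$ there are no nonzero polynomials to consider, and if $\varepsilon\geq 1$ there are no nonzero roots of modulus $\leq 1-\varepsilon$; so assume $0<\varepsilon<1$ and set $R=\max_{a\in\calD}\abs a$ and $r_0=\min_{a\in\calD\setminus\{0\}}\abs a>0$, both depending only on $\calD$. Given a nonzero $P\in\Union_{n}\calP^{n}$, factor $P(X)=X^{s}Q(X)$ with $Q(0)\neq 0$; then $Q(X)=\sum_{k=0}^{d}c_{k}X^{k}$ still has all coefficients $c_{k}\in\calD$, with $c_{0}\neq 0$, and the nonzero roots of $P$ of modulus $\leq 1-\varepsilon$ are exactly the roots of $Q$ of modulus $\leq 1-\varepsilon$; let $m$ denote their number counted with multiplicity (if $Q$ is constant then $m=0$ and there is nothing to do).

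I would fix $\rho=1-\varepsilon/2\in(1-\varepsilon,1)$ and apply Jensen's formula to $Q$ on $\abs z\leq\rho$ (this remains valid when $Q$ has zeros on $\abs z=\rho$; alternatively one perturbs $\rho$ within a fixed subinterval of $(1-\varepsilon,1)$ to avoid such zeros, which changes the constants only):
\[
\sum_{\abs{\beta_j}<\rho}\log\frac{\rho}{\abs{\beta_j}}=\frac{1}{2\pi}\int_{0}^{2\pi}\log\abs{Q(\rho e^{i\theta})}\,d\theta-\log\abs{Q(0)},
\]
where the $\beta_j$ are the roots of $Q$. On the left, every term is nonnegative, and each root with $\abs{\beta_j}\leq 1-\varepsilon$ contributes at least $\log\frac{1-\varepsilon/2}{1-\varepsilon}>0$, so the left side is $\geq m\log\frac{1-\varepsilon/2}{1-\varepsilon}$. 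On the right, $\log\abs{Q(0)}=\log\abs{c_{0}}\geq\log r_{0}$, while for $\abs z=\rho$ we have $\abs{Q(z)}\leq\sum_{k=0}^{d}\abs{c_{k}}\rho^{k}\leq R\sum_{k\geq 0}\rho^{k}=\frac{R}{1-\rho}=\frac{2R}{\varepsilon}$, hence $\frac{1}{2\pi}\int_{0}^{2\pi}\log\abs{Q(\rho e^{i\theta})}\,d\theta\leq\log\frac{2R}{\varepsilon}$. Combining,
\[
m\leq\frac{\log\bigl(2R/(\varepsilon r_{0})\bigr)}{\log\bigl((1-\varepsilon/2)/(1-\varepsilon)\bigr)},
\]
whose right-hand side depends only on $\varepsilon$ and $\calD$; taking $k$ to be its ceiling completes the proof, since a bound on the number of roots counted with multiplicity is a fortiori a bound on the number of distinct nonzero roots of modulus $\leq 1-\varepsilon$.

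The one point that needs care — and the reason a naive argument via the Mahler measure inequality $M(Q)\geq\abs{c_{0}}(1-\varepsilon)^{-m}$ does not suffice — is that one must count zeros at a radius bounded away from $1$ (here $\rho\leq 1-\varepsilon/2$), which keeps $\sup_{\abs z=\rho}\abs{Q(z)}$ bounded by $2R/\varepsilon$ independently of $\deg Q$; controlling $\abs Q$ on the unit circle, or equivalently $M(Q)$, only gives a bound on $m$ that grows with $\deg Q$. I should also note that the argument uses nothing about $\calD$ beyond finiteness — in particular not the algebraicity of its elements — which is consistent with this being only a slight extension of the integer-coefficient version.
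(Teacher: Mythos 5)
Your proof is correct and follows essentially the same route as the paper's: factor out the zero at the origin so that $Q(0)$ is a nonzero element of $\calD$, apply Jensen's formula on a circle of radius strictly between $1-\varepsilon$ and $1$, bound the supremum there by the geometric series $\ell_\infty(Q)\sum_k\rho^k$, and read off a bound on the number of roots depending only on $\varepsilon$ and $\calD$. The only differences are cosmetic (you fix the radius $1-\varepsilon/2$ and obtain an explicit bound, whereas the paper chooses a radius $k/(k+1)$ tied to the target $k$), and your closing remark about why the circle must be kept away from $\abs z=1$ correctly identifies the key point.
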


\begin{proof}
The proposition was proved  in \cite[Lemma 4.2]{RapaportVarju2024} in the case when $\mathcal D$ is a finite set of integers.  Here we follow that proof  with slight modifications. Let $\varepsilon>0$. Set
$$\rho=\frac{\max\{|s|:\; 0\neq s\in \mathcal D\}}{\min\{|s|:\; 0\neq s\in \mathcal D\}}.$$
 Define
$$
a(j) = \frac{j}{j+1}\cdot\frac{1}{[\rho(j+1)]^{1/j}} \quad \mbox{ for } j\in \bbN.
$$
Then $ a(j)\to 1 $ as $ j\to \infty $. So we may pick a large integer $k$ such that $a(k)>1-\varepsilon/2$.

Let $P\in \mathcal P^n$ for some $n\in \bbN$, and let $ z_{1}, \ldots, z_{N} $ be the nonzero roots, repeated according to multiplicity,  of $ P $ in the open disc $ \abs{z} <  1-\varepsilon $.  Below we prove that $N\leq k$.

Set $u=\min\{|s|:\; 0\neq s\in \mathcal D\}$ and $$Q= \frac{P}{u X^{m}},$$  where $ m $ is the lowest degree of the monomials in $ P $. Then  $ \abs{Q(0)} \geq 1 $ and $ \ell_\infty(Q) \leq \rho $. Write $r=k/(k+1)$. Then $1-\varepsilon/2<a(k)<r$.  By Jensen's formula and since  $\abs{Q(0)} \geq 1$,
	\begin{equation}
\label{e-Jensen} \sum_{j=1}^{N} \log\frac{r}{\abs{z_{j}}} \leq \int_{0}^{1}\log|Q(re^{2\pi i t})|dt.
\end{equation}
	Note that for each $t\in [0,1]$, $$ \abs{Q(re^{2\pi i t})} \leq \rho (1+r+r^{2}+\cdots)= \frac{\rho}{1-r}=\rho (k+1).$$
 Since $ \abs{z_{j}} \leq a(k) $ for $ 1 \leq j \leq N $, by \eqref{e-Jensen},
	\begin{equation*}
		N \log (\rho(k+1))^{1/k} \leq \log (\rho(k+1)).
	\end{equation*}
	This concludes $ N \leq k $.
\end{proof}

As a consequence of the above proposition, we give the following corollary which  is an analogue of \cite[Lemma 4.3]{RapaportVarju2024} in our setting.

\begin{corollary}
\label{cor-temp1}
For any $\varepsilon\in (0,1/2)$, there exists $c=c(\varepsilon)\in (0,1)$ such that the following holds. Let $n\in \bbN$, $0\neq P\in {\mathcal P}^n$ and $0<r<\varepsilon^n 2^{-n}$. Suppose that $|P(\lambda)|\leq r$ for some $\lambda\in \bbC$ with $\varepsilon\leq |\lambda|\leq 1-\varepsilon$. Then there exists $\eta\in \bbC$ such that $P(\eta)=0$ and
$$
|\lambda-\eta|\leq (2^n\epsilon^{-n}r)^c.
$$
\end{corollary}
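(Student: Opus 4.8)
The statement is the standard "a polynomial small at a point is close to a root" estimate, localized using the a priori bound on the number of small-modulus roots from Proposition \ref{lem:NumRoots}. The plan is to write $P(X) = a_n \prod_{i=1}^{n}(X-\eta_i)$ (after possibly dividing by a power of $X$, or just keeping the full factorization over $\bbC$), and to bound $|P(\lambda)| = |a_n| \prod_i |\lambda - \eta_i|$ from below. The key point is that only a bounded number $k = k(\varepsilon/2,\calD)$ of the roots $\eta_i$ can lie within distance, say, $\varepsilon/2$ of the annulus $\varepsilon \le |z| \le 1-\varepsilon$ — more precisely, inside $|z| \le 1-\varepsilon/2$ and nonzero — while all other nonzero roots are at distance $\gtrsim \varepsilon$ from $\lambda$, and roots at $0$ contribute a factor $|\lambda| \ge \varepsilon$. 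So the "far" factors contribute at least $c_1^n$ for some $c_1 = c_1(\varepsilon) \in (0,1)$ (there are at most $n$ of them, each of size $\ge$ some fixed constant times $\varepsilon$, using also that $|\eta_i| \le 2$ say, so $|\lambda - \eta_i| \le 3$ is harmless for lower bounds but one needs a \emph{lower} bound — here one uses that if $\eta_i$ is not near the annulus then either $|\eta_i| > 1-\varepsilon/2$, forcing $|\lambda - \eta_i| \ge \varepsilon/2$, or $|\eta_i| < \varepsilon$, forcing $|\lambda - \eta_i| \ge$ something, wait — that's not automatic).

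Let me restructure. Split the roots into three classes: (i) roots equal to $0$; (ii) nonzero roots with $|\eta_i| \le 1 - \varepsilon/2$; (iii) roots with $|\eta_i| > 1-\varepsilon/2$. For class (i), $|\lambda - \eta_i| = |\lambda| \ge \varepsilon$. For class (iii), $|\lambda - \eta_i| \ge |\eta_i| - |\lambda| > (1-\varepsilon/2) - (1-\varepsilon) = \varepsilon/2$. Each of these contributes a factor bounded below by $\varepsilon/2$, and there are at most $n$ of them total, so together they contribute at least $(\varepsilon/2)^n$. Class (ii) has at most $k = k(\varepsilon/2, \calD)$ members by Proposition \ref{lem:NumRoots}. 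Among these, let $\eta = \eta_{i_0}$ be the closest to $\lambda$, and let $\delta = |\lambda - \eta|$; the other $\le k-1$ members of class (ii) each satisfy $|\lambda - \eta_i| \ge \delta$, but for an \emph{upper} bound on their reciprocal we also need — no, we want a lower bound on $|P(\lambda)|$, so for class (ii) roots other than $\eta$ we use $|\lambda - \eta_i| \ge \delta$. Also $|a_n| \ge$ ? Here $a_n$ need not be an integer, so we cannot say $|a_n| \ge 1$; instead $a_n \in \calD$ so $|a_n| \ge u := \min\{|s| : 0 \ne s \in \calD\} > 0$, a constant depending on $\calD$. Hence
\begin{equation*}
	r \ge |P(\lambda)| = |a_n|\prod_{i}|\lambda - \eta_i| \ge u \cdot (\varepsilon/2)^n \cdot \delta \cdot \delta^{k-1} = u\,(\varepsilon/2)^n \delta^{k},
\end{equation*}
wait, that only works if class (ii) is nonempty and all its members are at distance $\ge \delta$; fine, and if class (ii) is empty then $P$ has no nonzero roots of small modulus near $\lambda$, but then $|P(\lambda)| \ge u(\varepsilon/2)^n > r$ by the hypothesis $r < \varepsilon^n 2^{-n}$ — actually we need $r < u(\varepsilon/2)^n$; the hypothesis gives $r < \varepsilon^n 2^{-n} = (\varepsilon/2)^n$, which beats $u(\varepsilon/2)^n$ only if $u \le 1$, not guaranteed. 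I would handle this by noting that for $n$ large the bound $(\varepsilon/2)^n$ absorbs the constant $u$ since $\varepsilon/2 < 1/2 < $ anything, or more cleanly: absorb $u$ by shrinking $\varepsilon$ to $\varepsilon' < \varepsilon$ in the counting and book-keeping, or simply observe the hypothesis $0 < r < \varepsilon^n 2^{-n}$ combined with $\varepsilon < 1/2$ forces the relevant inequality for $n \ge n_0(\varepsilon)$ and treat small $n$ separately (finitely many $P$ of each fixed degree up to scaling — hmm, $\calD$ need not be finite in a way that makes $\calP^n$ finite... $\calD$ \emph{is} finite, so $\calP^n$ is finite for each $n$). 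Rearranging the displayed inequality gives $\delta \le (r\,u^{-1}(2/\varepsilon)^n)^{1/k} = (2^n \varepsilon^{-n} r)^{1/k} u^{-1/k}$, and then $|\lambda - \eta| = \delta \le (2^n\varepsilon^{-n}r)^{c}$ with $c = c(\varepsilon) = 1/(k+1)$ (say), the extra $1/(k+1)$ versus $1/k$ providing room to kill the constant $u^{-1/k}$ for $n$ large, using $2^n\varepsilon^{-n}r < 1$.

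\textbf{Main obstacle.} The only genuine subtlety, as opposed to the bookkeeping above, is dealing with the leading coefficient: unlike in \cite[Lemma 4.3]{RapaportVarju2024} where $P$ has integer coefficients and $|a_n| \ge 1$, here $a_n \in \calD$ only gives $|a_n| \ge u$, a constant that is harmless in spirit but must be absorbed into the exponent by passing from $1/k$ to a slightly smaller $c(\varepsilon)$ and invoking $2^n\varepsilon^{-n}r < 1$ (so that $(2^n\varepsilon^{-n}r)^{1/k - c} \ge u^{-1/k}$ once $n$ is large, with the finitely many small $n$ absorbed by possibly further shrinking $c$). A second, very minor point is the $\eta_i = 0$ case and the choice to factor over $\bbC$ directly rather than dividing out $X^m$ first — I would just factor $P(X) = a_n \prod_{i=1}^{n}(X - \eta_i)$ over $\bbC$ with roots listed with multiplicity (allowing $\eta_i = 0$), which sidesteps any issue. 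Everything else is a direct transcription of the Rapaport–Varjú argument, with Proposition \ref{lem:NumRoots} (applied with $\varepsilon/2$ in place of $\varepsilon$) supplying the bound $k$ on the number of nonzero roots of modulus $\le 1-\varepsilon/2$.
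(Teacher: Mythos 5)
Your proposal follows the same route as the paper's proof: factor $P$ over $\bbC$, invoke Proposition \ref{lem:NumRoots} to bound by a constant $k=k(\varepsilon,\calD)$ the number of nonzero roots of modulus at most $1-\varepsilon/2$, bound each remaining factor of $|P(\lambda)|$ (roots at $0$ and roots of modulus $>1-\varepsilon/2$) below by $\varepsilon/2$, and extract the nearest small root with exponent $1/k$. The decomposition of the roots and the use of the key lemma are exactly those of the paper, so the core of the argument is fine.

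The one point where you go beyond the paper is the leading coefficient, and there your fix is not correct as stated. You rightly observe that one only gets $|a_N|\ge u:=\min\{|s|:0\ne s\in\calD\}$, but the proposed absorption of $u^{-1/k}$ by shrinking $1/k$ to $1/(k+1)$ "for $n$ large" fails: with $X=2^n\varepsilon^{-n}r$, the inequality $X^{1/k}u^{-1/k}\le X^{1/(k+1)}$ is equivalent to $X\le u^{k+1}$, whereas the hypothesis only gives $X<1$, and $X$ can be arbitrarily close to $1$ for every $n$ (take $r$ close to $\varepsilon^n2^{-n}$), so largeness of $n$ buys nothing; the same issue leaves your "class (ii) empty" case open when $u<1$. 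For what it is worth, the paper's own proof silently drops the factor $|a_N|$ altogether, i.e.\ implicitly works under the normalization $u\ge1$ (automatic in the integer-coefficient setting of Rapaport--Varj\'u, and achievable here by rescaling the translations); under that normalization both your argument and the paper's close cleanly. If you want the statement for a general finite $\calD$, the regime $u^{k+1}(\varepsilon/2)^n<r<(\varepsilon/2)^n$ genuinely needs a separate treatment rather than an appeal to large $n$.
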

\begin{proof}
The proof is nearly identical to that of  \cite[Lemma 4.3]{RapaportVarju2024}. For the reader's convenience, we include the details.

 Let $\varepsilon\in (0,1/2)$, and let $n, P, r$ be given as in the statement of the lemma. By Proposition \ref{lem:NumRoots}, there exists an integer $k=k(\epsilon,\mathcal D)$ (independent of $P$) such that $P$ has at most $k$ nonzero roots of modulus at most $1-\varepsilon/2$ (repeated according to multiplicity).  Denote these roots by $\eta_1,\ldots, \eta_m$. Then $m\leq k$ and
$$
r\geq |P(\lambda)|\geq (\varepsilon/2)^{n-m}\cdot \prod_{j=1}^m |\eta_j-\lambda|.
$$
Hence there exists some $1\leq j\leq m$ such that
$$
|\eta_j-\lambda|\leq (r\cdot (\varepsilon/2)^{-n})^{1/m}\leq (2^n\epsilon^{-n}r)^{1/k}.
$$
This completes the proof of the corollary by taking $c=1/k$.
 \end{proof}

\subsection{A separation property of roots of polynomials in $\mathcal P^n$}
\label{subsec:SepRoots} The main result in the subsection is the following proposition, which is an analogue of \cite[Lemma 4.1]{RapaportVarju2024}.

\begin{proposition}\label{prop:SepRoots}
	There exists $ M = M(\calD) > 0 $ such that the following holds for all sufficiently large $ n $. Let $ \lambda_{1} \neq \lambda_{2} $ be two algebraic numbers each of which is a root of a polynomial in $ \calP^{n} $. Then $ |\lambda_{1}-\lambda_{2}|>2n^{-Mn} $.
\end{proposition}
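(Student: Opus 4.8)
The plan is to reduce the statement to the classical separation inequality of Mahler~\cite{Mahler1964} for polynomials with \emph{integer} coefficients, by replacing each $P \in \calP^{n}$ by a single polynomial $\Lambda_{P} \in \bbZ[X]$ built as a product over Galois conjugates. I would begin by invoking Lemma~\ref{lem-extension} to fix an algebraic \emph{integer} $\theta$ with $\bbQ(\theta) = \bbQ(\calD)$; set $d = \deg\theta$ and let $\sigma_{1} = \operatorname{id}, \sigma_{2}, \dots, \sigma_{d}$ be the $d$ distinct embeddings $\bbQ(\theta) \hookrightarrow \bbC$. Choose $N_{0} \in \bbN$ with $N_{0}s \in \bbZ[\theta]$ for every $s \in \calD$, and write $N_{0}s = g_{s}(\theta)$ with $g_{s} \in \bbZ[X]$, $\deg g_{s} < d$, and $\ell_{\infty}(g_{s}) \leq B$ for some $B = B(\calD)$. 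For $P = \sum_{k} a_{k}X^{k} \in \calP^{n}$, put $P^{[i]}(X) = \sum_{k} \sigma_{i}(N_{0}a_{k})\,X^{k}$, so $P^{[1]} = N_{0}P$, and define $\Lambda_{P} = \prod_{i=1}^{d} P^{[i]}$. Because $\theta$ is an algebraic integer, each $\sigma_{i}(N_{0}a_{k}) = g_{a_{k}}(\sigma_{i}\theta)$ is an algebraic integer, so every coefficient of $\Lambda_{P}$ is an algebraic integer; moreover each coefficient is fixed by every element of $\operatorname{Gal}(\overline{\bbQ}/\bbQ)$, since such an element only permutes $\sigma_{1}, \dots, \sigma_{d}$ and hence permutes the $d$ factors of $\Lambda_{P}$. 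Therefore $\Lambda_{P} \in \bbZ[X]$.

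Next I would record three properties of $\Lambda_{P}$. First, $\deg\Lambda_{P} \leq dn$, since each $P^{[i]}$ has degree at most $n$. Second, every root of $P$ is a root of $P^{[1]} = N_{0}P$, hence of $\Lambda_{P}$. Third, $M(\Lambda_{P})$ is bounded by a polynomial in $n$ of degree depending only on $\calD$: the conjugates $\sigma_{i}\theta$ are bounded in modulus, so $|\sigma_{i}(N_{0}a_{k})| = |g_{a_{k}}(\sigma_{i}\theta)| \leq C_{0}$ for a constant $C_{0} = C_{0}(\calD)$, whence $\ell_{1}(P^{[i]}) \leq C_{0}(n+1)$, and then by Lemma~\ref{lem:MahlerNorms} together with the multiplicativity $M(fg) = M(f)M(g)$ one gets $M(\Lambda_{P}) = \prod_{i=1}^{d} M(P^{[i]}) \leq \prod_{i=1}^{d} \ell_{1}(P^{[i]}) \leq (C_{0}(n+1))^{d}$.

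Now suppose $\lambda_{1} \neq \lambda_{2}$ are roots of nonzero polynomials $P_{1}, P_{2} \in \calP^{n}$, and let $\Lambda \in \bbZ[X]$ be the squarefree part of $\Lambda_{P_{1}}\Lambda_{P_{2}}$ in $\bbZ[X]$; then $\Lambda$ is squarefree, divides $\Lambda_{P_{1}}\Lambda_{P_{2}}$ in $\bbZ[X]$, and has the same set of roots. Writing $\Lambda_{P_{1}}\Lambda_{P_{2}} = \Lambda W$ with $W \in \bbZ[X]\setminus\{0\}$, we have $M(W) \geq 1$, so $\deg\Lambda \leq 2dn$, $M(\Lambda) \leq M(\Lambda_{P_{1}})M(\Lambda_{P_{2}}) \leq (C_{0}(n+1))^{2d}$, and the (pairwise distinct) roots of $\Lambda$ include $\lambda_{1}$ and $\lambda_{2}$. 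Since $\Lambda \in \bbZ[X]$ is squarefree, $\operatorname{disc}(\Lambda)$ is a nonzero integer, so $|\operatorname{disc}(\Lambda)| \geq 1$. Mahler's separation inequality~\cite{Mahler1964} then gives an absolute constant $\gamma > 0$ with $|\lambda_{1} - \lambda_{2}| \geq \gamma\,|\operatorname{disc}(\Lambda)|^{1/2}\,(\deg\Lambda)^{-2\deg\Lambda}\,M(\Lambda)^{-2\deg\Lambda}$; substituting the bounds above and using $n + 1 \leq 2n$ yields $|\lambda_{1} - \lambda_{2}| \geq \gamma\,b^{\,n}\,n^{-cn}$ for constants $b = b(\calD) \in (0,1)$ and $c = c(\calD) > 0$. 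Taking $M := c + 1$ (which depends only on $\calD$) gives $|\lambda_{1} - \lambda_{2}| \geq \gamma\,(bn)^{n}\,n^{-Mn}$, and since $\gamma(bn)^{n} \to \infty$ this exceeds $2 n^{-Mn}$ once $n$ is larger than a threshold depending only on $\calD$.

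The only genuinely delicate point is the construction of $\Lambda_{P}$: one must clear denominators and pass to a norm so that the pertinent discriminant becomes a nonzero rational integer (hence $\geq 1$ in modulus), while keeping $\deg\Lambda_{P}$ linear in $n$ and $M(\Lambda_{P})$ polynomial in $n$; and one must route the final estimate through Mahler's discriminant inequality rather than through a resultant of the minimal polynomials of $\lambda_{1}$ and $\lambda_{2}$ — such a resultant, between polynomials of degree $O(dn)$, would contribute a factor of order $2^{-O(n^{2})}$ to the exponent, which is far too weak for the claimed bound $2n^{-Mn}$. Everything else (the Galois-invariance argument, the elementary $\ell_{1}$-estimates, and the passage to the squarefree part) is routine.
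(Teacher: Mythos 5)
Your proposal is correct and follows essentially the same route as the paper: both clear denominators and take the product over the embeddings of $\bbQ(\calD)$ to obtain an integer polynomial of degree at most $dn$ with polynomially bounded Mahler measure vanishing at each $\lambda_i$ (this is exactly the paper's Lemma~\ref{lem:ToUseMahler}), and both then conclude via Mahler's 1964 discriminant/separation inequality together with $\lvert\operatorname{disc}\rvert\geq 1$. The only cosmetic difference is that you pass to the squarefree part of the full product $\Lambda_{P_1}\Lambda_{P_2}$, whereas the paper (Lemma~\ref{thm:Mahler}) first extracts the relevant irreducible factors of the two integer polynomials; either reduction yields an admissible constant $M(\calD)$.
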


To prove the above proposition,  we start with a result essentially due to Mahler~\cite{Mahler1964}.
For $ n \in \bbN $ and  $ a> 0 $, write
\begin{equation}
\label{e-Fna}
	\calF(n, a) := \left\{ P\in \bbZ[X] : \ell_\infty(P) \leq a \text{ and } \deg P \leq n \right\}.
\end{equation}

\begin{lemma}\label{thm:Mahler}
Let $n\geq 4$, $a>0$, and  let $ \lambda_{1}, \lambda_{2} $ be two distinct algebraic numbers each of which is a root of a polynomial in $ \calF(n, a) $. Then $$ |\lambda_{1}-\lambda_{2}|>2n^{-4n} a^{-4n+2}.$$
\end{lemma}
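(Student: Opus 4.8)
The plan is to reduce the claim to Mahler's classical bound for the separation between roots of a single \emph{separable} integer polynomial, by a case distinction that keeps all degrees at most $n$; in particular I would avoid passing to the product $P_1P_2$, whose degree could be $2n$, which weakens the exponents too much. We may assume $a\ge 1$, since otherwise $\calF(n,a)$ contains no nonzero polynomial and there is nothing to prove. Fix $P_1,P_2\in\calF(n,a)$ with $P_i(\lambda_i)=0$.

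\emph{Case 1: $P_2(\lambda_1)=0$.} Then $\lambda_1,\lambda_2$ are two distinct roots of $P_2$. Replacing $P_2$ by a primitive squarefree $Q\in\bbZ[X]$ dividing it and having the same roots (the primitive part of the radical of $P_2$), one gets $\deg Q\le n$, necessarily $\deg Q\ge 2$, and, by multiplicativity of the Mahler measure and Lemma~\ref{lem:MahlerNorms}, $M(Q)\le M(P_2)\le\ell_1(P_2)\le(n+1)a$. Its discriminant $D(Q)$ is a nonzero integer, so $|D(Q)|\ge 1$. Now I would invoke Mahler's inequality \cite{Mahler1964}, $|D(F)|\le N^N M(F)^{2N-2}$ for $F\in\bbC[X]$ of degree $N$: combining it with the factorisation $|D(F)|^{1/2}=|\mathrm{lead}(F)|^{N-1}\prod_{i<j}|\gamma_i-\gamma_j|$, dividing out the factor $|\gamma_1-\gamma_2|$, and estimating the resulting divided-difference Vandermonde determinant by Hadamard's inequality, one obtains $|\gamma_1-\gamma_2|\ge c\,N^{-(N+2)/2}M(F)^{-(N-1)}$ (with $c>0$ an absolute constant) for any two distinct roots of a separable $F$. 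Applied to $Q$ this gives $|\lambda_1-\lambda_2|\ge c\,n^{-(n+2)/2}((n+1)a)^{-(n-1)}$, which for $n\ge 4$, $a\ge1$ far exceeds $2n^{-4n}a^{-4n+2}$.

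\emph{Case 2: $P_2(\lambda_1)\ne 0$.} I would write $P_2=c\prod_{k=1}^{N_2}(X-\beta_k)$ with $c=\mathrm{lead}(P_2)$, $N_2=\deg P_2\le n$, $\beta_1=\lambda_2$, so that $|\lambda_1-\lambda_2|=|P_2(\lambda_1)|\big/\bigl(|c|\prod_{k\ge2}|\lambda_1-\beta_k|\bigr)$. For the numerator, $P_2(\lambda_1)$ is a nonzero element of $\bbQ(\lambda_1)$, a field of degree $d_1:=\deg\lambda_1\le n$ over $\bbQ$; by Lemma~\ref{lem:PolyVal-LB}, $|P_2(\lambda_1)|\ge H(P_2(\lambda_1))^{-d_1}$, while Lemma~\ref{lem:PolyHeights} gives $H(P_2(\lambda_1))\le\ell_1(P_2)H(\lambda_1)^{\deg P_2}$; raising to the $d_1$-th power and using $H(\lambda_1)^{d_1}=M(\lambda_1)\le M(P_1)\le\ell_1(P_1)\le(n+1)a$ together with $\deg P_2\le n$, $d_1\le n$ yields $|P_2(\lambda_1)|\ge((n+1)a)^{-(d_1+n)}\ge((n+1)a)^{-2n}$. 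For the denominator, the Cauchy bound (the leading coefficients being nonzero integers) gives $|\lambda_1|,|\beta_k|\le 1+a$, so $|\lambda_1-\beta_k|\le 2(1+a)\le 4a$ and $|c|\le a$, whence $|c|\prod_{k\ge2}|\lambda_1-\beta_k|\le 4^{n-1}a^n$. Therefore $|\lambda_1-\lambda_2|\ge(n+1)^{-2n}4^{-(n-1)}a^{-3n}$, and since $a\ge 1$ with $-3n\ge-4n+2$ and $n^{4n}\ge 2\cdot4^{n-1}(n+1)^{2n}$ for $n\ge 4$, this is $\ge 2n^{-4n}a^{-4n+2}$.

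The one point requiring real care is the exponent bookkeeping, so that the final bound is exactly of the stated form $2n^{-4n}a^{-4n+2}$: every estimate must be of the \emph{multiplicative} kind — the Mahler measure (well behaved under divisibility and products), Hadamard's inequality for the Vandermonde/Sylvester determinants, and the height estimates of Lemmas~\ref{lem:PolyHeights}--\ref{lem:PolyVal-LB} — rather than a naive triangle inequality or an $\ell_1$-versus-$M(\cdot)$ comparison, since such crude bounds introduce factors as large as $2^{\binom n2}$ that would destroy the conclusion; and one must keep the degrees at $n$ throughout rather than forming $P_1P_2$.
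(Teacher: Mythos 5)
Your argument is correct, and it reaches the lemma by a genuinely different (and in fact sharper) route than the paper. The paper treats both situations uniformly: it extracts an irreducible factor $R_{1}$ of $P_{1}$ with $R_{1}(\lambda_{1})=0$ and an irreducible factor $Q_{1}$ of $P_{2}$ with $Q_{1}(\lambda_{2})=0$, sets $P=R_{1}Q_{1}$ (or a single factor when $\lambda_{1},\lambda_{2}$ are conjugate), so that $\deg P\le 2n$ and $M(P)\le M(P_{1})M(P_{2})\le(n+1)a^{2}$ by Lemma~\ref{lem:MahlerNorms}, and then applies Mahler's separation theorem \cite[Theorem 2]{Mahler1964} to this degree-$\le 2n$ polynomial; the exponents $4n$ and $4n-2$ in the statement are calibrated precisely to absorb that degree doubling, so your parenthetical claim that passing to the product ``weakens the exponents too much'' is not accurate for the bound as stated --- it only means the stated bound is weaker than what your method delivers. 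Your Case~1 is the common-polynomial branch carried out on a degree-$\le n$ squarefree divisor, and your Case~2 replaces the product construction by a Liouville-type estimate: the lower bound $|P_{2}(\lambda_{1})|\ge((n+1)a)^{-2n}$ from Lemmas~\ref{lem:PolyHeights} and~\ref{lem:PolyVal-LB} (together with $M(\lambda_{1})\le M(P_{1})$, which holds by Gauss's lemma and multiplicativity of $M$), divided by the Cauchy-bound estimate $|c|\prod_{k\ge2}|\lambda_{1}-\beta_{k}|\le 4^{n-1}a^{n}$. The exponent bookkeeping checks out, and the closing inequality $n^{4n}\ge 2\cdot4^{n-1}(n+1)^{2n}$ does hold for all $n\ge4$ with ample margin, so you end with the stronger bound $(n+1)^{-2n}4^{-(n-1)}a^{-3n}$. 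What your route buys is this extra strength and much more numerical slack in the final comparison; what it costs is the case distinction and the height machinery, which the paper's one-line reduction does not need. One small inaccuracy in Case~1: the discriminant upper bound $|D(F)|\le N^{N}M(F)^{2N-2}$ is Mahler's Theorem~1 and is not the ingredient that yields root separation; what is needed is $|D(Q)|\ge1$ together with the Hadamard estimate of the complementary Vandermonde product, both of which you do also invoke --- the resulting statement is exactly \cite[Theorem 2]{Mahler1964} with $c=\sqrt{3}$, which you could simply cite, as the paper does.
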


\begin{proof}
We follow the proof of \cite[Lemma 4.1]{RapaportVarju2024} with minor modifications. Let $n, a, \lambda_1,\lambda_2$ be given as in the statement of the lemma.  	Suppose $ P_{1}(\lambda_{1}) =0$ and $ P_{2}(\lambda_{2}) =0 $ for some $ P_{1}, P_{2} \in \calF(n, a) $. Let $ P_{1}=R_{1}^{k_{1}}\cdots R_{s}^{k_{s}} $ and  $ P_{2}=Q_{1}^{h_{1}}\cdots Q_{t}^{h_{t}} $ be respectively the irreducible factorizations of $ P_{1} $ and $ P_{2} $ in $ \bbZ[X] $.
	
	Without loss of generality assume that $ R_{1}(\lambda_{1})=0 $ and $ Q_{1}(\lambda_{2})=0 $. If $\lambda_{1}$ and $\lambda_{2}$ are Galois conjugates, then take $ P= R_{1} $ or $ Q_{1} $, otherwise, take $ P=R_{1}Q_{1} $. In both cases,  $ \deg P\leq 2n $, and  $ M(P)\leq M(P_{1})M(P_{2})\leq (n+1)a^{2} $ by Lemma \ref{lem:MahlerNorms}.
Hence applying \cite[Theorem 2]{Mahler1964} to $ P $ gives
	\begin{align*}
		\abs{\lambda_{1}-\lambda_{2}} &> \sqrt{3}(2n)^{-(n+1)}((n+1)a^{2})^{-2n+1} \\
 &>\sqrt{3}\, 2^{-3n} n^{-3n}  a^{-4n+2}\\
&>2 n^{-4n} a^{-4n+2},
	\end{align*}
	where we use the assumption   $ n \geq 4$ in the last inequality.
\end{proof}

To apply Lemma \ref{thm:Mahler}, we need the following result.

\begin{lemma}\label{lem:ToUseMahler} Let $\lambda\in \bbC$.
	If $\lambda$ is a root of a polynomial $ f \in \calP^{n} $ for some $n\geq 1$, then $\lambda$ is a root of a polynomial $ F \in \calF(d n, C(n+1)^{d} ) $, where $d=[\bbQ({\mathcal D}): \bbQ]$, $C$ is a constant depending on ${\mathcal D}$ and $\calF(\cdot, \cdot)$ is defined as in \eqref{e-Fna}.
\end{lemma}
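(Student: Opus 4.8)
The plan is to descend from polynomials over the number field $\bbK := \bbQ(\calD)$ to integer polynomials by taking a suitable ``norm'' of the polynomial. Set $d = [\bbK : \bbQ]$ and let $\sigma_{1} = \mathrm{id}, \sigma_{2}, \ldots, \sigma_{d}$ be the distinct field embeddings $\bbK \hookrightarrow \bbC$; for $g(X) = \sum_{i} b_{i} X^{i} \in \bbK[X]$ write $\sigma_{j}(g)(X) := \sum_{i} \sigma_{j}(b_{i}) X^{i}$. First I would fix a positive integer $D$ with $D\alpha \in \AlgInt$ for every $\alpha \in \calD$; such a $D$ exists because $\calD$ is finite and each algebraic number becomes an algebraic integer after multiplication by the leading coefficient of its minimal polynomial in $\bbZ[X]$. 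We may assume $f \neq 0$, the case $f = 0$ being trivial. Then $Df \in \bbK[X]$ has algebraic-integer coefficients, $\deg(Df) = \deg f \leq n$, and $(Df)(\lambda) = 0$.

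Next I would set $F(X) := \prod_{j=1}^{d} \sigma_{j}(Df)(X)$ and verify its properties. Since the $j=1$ factor is $Df$, we get $F(\lambda) = (Df)(\lambda)\cdot\prod_{j \geq 2} \sigma_{j}(Df)(\lambda) = 0$. Each $\sigma_{j}(Df)$ is nonzero of degree $\leq n$ (as $\sigma_{j}$ is injective), so $F \neq 0$ and $\deg F \leq dn$. For the coefficient bound, let $B := \max\{1,\ D\max\{|\sigma_{j}(\alpha)| : \alpha \in \calD,\ 1 \leq j \leq d\}\}$, a constant depending only on $\calD$; then $\ell_\infty(\sigma_{j}(Df)) \leq B$, hence $\ell_1(\sigma_{j}(Df)) \leq (n+1)B$ (at most $n+1$ terms), and using submultiplicativity of $\ell_1$ together with $\ell_\infty \leq \ell_1$,
\[
	\ell_\infty(F) \leq \ell_1(F) \leq \prod_{j=1}^{d} \ell_1(\sigma_{j}(Df)) \leq (n+1)^{d} B^{d}.
\]
Taking $C := B^{d}$ then gives $F \in \calF(dn, C(n+1)^{d})$, provided we know $F \in \bbZ[X]$.

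The remaining, and main, point is to show $F \in \bbZ[X]$. Each coefficient of $F$ is an integer-coefficient polynomial expression in the numbers $\sigma_{j}(Da_{i})$, each of which is an algebraic integer (an embedding sends algebraic integers to algebraic integers), so all coefficients of $F$ are algebraic integers. To see they are rational, I would pass to the Galois closure $L$ of $\bbK/\bbQ$: each $\sigma_{j}$ extends to an automorphism of $L$, and for any $\tau \in \mathrm{Gal}(L/\bbQ)$ the assignment $\sigma_{j} \mapsto (\tau \circ \sigma_{j})|_{\bbK}$ permutes $\{\sigma_{1}, \ldots, \sigma_{d}\}$; consequently $\tau$ permutes the factors $\sigma_{j}(Df)$ of $F$, so $\tau$ fixes $F$ coefficientwise. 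Hence the coefficients of $F$ lie in $L^{\mathrm{Gal}(L/\bbQ)} = \bbQ$, and being algebraic integers they lie in $\bbZ$. Putting everything together yields the lemma. I expect the only slightly delicate bookkeeping to be this permutation-of-embeddings argument and the verification that $C$ depends on $\calD$ alone, both of which are routine.
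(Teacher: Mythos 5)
Your proof is correct, and the construction is the same as the paper's: clear denominators, take the product of the polynomial with all of its conjugates under the $d$ embeddings of $\bbQ(\calD)$, and bound $\ell_\infty$ of the product via submultiplicativity of $\ell_1$ exactly as the paper does. The only genuine divergence is in how the two arguments certify $F\in\bbZ[X]$. The paper first invokes the primitive element theorem (its Lemma~\ref{lem-extension}) to write $\bbQ(\calD)=\bbQ(\theta)$ for an algebraic integer $\theta$, chooses $L$ with $L\calD\subset\bbZ[\theta]$, and then observes that the coefficients of $F$ are symmetric integer-coefficient polynomials in the conjugates $\theta_1,\dots,\theta_d$, hence rational integers. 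You instead only clear denominators into $\AlgInt$ (a weaker requirement than landing in $\bbZ[\theta]$), note that all coefficients of $F$ are algebraic integers, and show they are rational by checking that every element of $\mathrm{Gal}(L/\bbQ)$ ($L$ the Galois closure) merely permutes the factors $\sigma_j(Df)$. Your route dispenses with the primitive element and the explicit representation $La_i=Q_i(\theta)$, at the cost of invoking the Galois closure; the paper's route is more hands-on and reuses the $\theta$, $L$, $\sigma_j$ apparatus that it also needs elsewhere (e.g.\ in the proof of Theorem~\ref{thm:UBGarsiaC}). Both are standard and both yield the same constant up to the choice of denominator, so the difference is one of bookkeeping rather than substance.
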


\begin{proof}
	By Lemma \ref{lem-extension}, we can choose an algebraic integer $\theta$ such that $\bbQ(\theta)=\bbQ({\mathcal D})$. Then we can take $L\in \bbN$ such that
\begin{equation}
\label{e-integer}
Ls \in \bbZ[\theta] \quad \mbox{ for all }s\in \mathcal D.
\end{equation}

Notice that the degree of $\bbQ(\mathcal D)$ over $\bbQ$ is $d$, so $\deg(\theta)=d$. Let $\theta_1,\ldots, \theta_d$ be the algebraic conjugates of $\theta$, with $\theta_1=\theta$. For $j=1,\ldots, d$, let $$\sigma_j: \bbQ(\theta)\to \bbQ(\theta_j)$$ be the field isomorphism such that $\sigma_j(a)=a$ for all $a\in \bbQ$ and $\sigma_j(\theta)=\theta_j$.

Let $\lambda\in \bbC$ and suppose that $P(\lambda)=0$ for some $P=\sum_{i=0}^na_i X^i\in \mathcal P^n$.  Define $F\in \bbC[X]$ by
$$
F=L^d \prod_{j=1}^d \left(\sum_{i=0}^n \sigma_j(a_i) X^i\right)= \prod_{j=1}^d \left(\sum_{i=0}^n \sigma_j(La_i) X^i\right).
$$
Clearly $F(\lambda)=0$, $\deg(F)\leq dn$ and  $$\ell_\infty(F)\leq \ell_1(F)\leq L^d \prod_{j=1}^d \left(\sum_{i=0}^n |\sigma_j(a_i)|\right)\leq C (n+1)^d,$$ where
$
C:=L^d \max\{|\sigma_j(s)|^d:\; 1\leq j\leq d, \;s\in \mathcal D\}.
$

To conclude the lemma, it remains to show that $F\in \bbZ[X]$. By \eqref{e-integer}, $La_i\in \bbZ[\theta]$ for all $0\leq i\leq n$.
Hence there exist integer polynomials $Q_i$, $i=0,\ldots, n$, such that $La_i=Q_i(\theta)$. It follows that for  $0\leq i\leq n$ and $1\leq j\leq d$, $$\sigma_j(La_i)=\sigma_j(Q_i(\theta))=Q_i(\theta_j). $$
Therefore $$F =\prod_{j=1}^d \left(\sum_{i=0}^n Q_i(\theta_j) X^i\right)=\sum_{k=0}^{dn}\beta_kX^k.$$
Notice that  each $ \beta_{k} $ is a symmetric  polynomial in $ \theta_{1}, \ldots, \theta_{d} $ with integral coefficients. Since  $ \theta  $ is an algebraic integer, it follows that  $ \beta_{k} \in \bbZ $, and so $ F \in \bbZ[X] $.
\end{proof}

Now we are ready to prove Proposition \ref{prop:SepRoots}.

\begin{proof}[Proof of Proposition \ref{prop:SepRoots}]
Let $d, C$ be the constants as in Lemma \ref{lem:ToUseMahler}.
	A combination of Lemmas \ref{thm:Mahler} and \ref{lem:ToUseMahler} gives
	\begin{equation*}
		\abs{\lambda_{1}-\lambda_{2}}  >2(d n)^{-4 d n} [(C (n+1))^{d}]^{-4d n+2}>2n^{-(4d^{2}+4d+1)n},
	\end{equation*}
	for  $ n $ sufficiently large depending on $ \calD $. Taking $ M = 4 d^{2} + 4d + 1 $ finishes the proof.
\end{proof}

\subsection{The order of zeros of  polynomials under certain constrains} \label{subsec:multiplicity}

The following proposition is an analogue of \cite[Lemma 2.3]{RapaportVarju2024} in our setting.
\begin{proposition}\label{prop:algLargeOrder}
	 Let $ \varepsilon > 0 $ and $k\in {\bbN}$. There exists a positive integer $N=N(\epsilon, k, \mathcal D)$ such that the following holds.  Let $n\in \bbN$ with $n\geq N$, and $ \lambda,\eta \in (-1, 1) $ such that $ \eta $ is algebraic with $\deg \eta \leq n $ and $ 0 < \abs{\lambda-\eta} \leq 1 $. Let $ n'\in \bbN $ such that $ n' \geq (k+2) n^{1+\varepsilon}$. Let $ 0 \neq P \in \calP^{n'} $. Suppose that
	\begin{equation}\label{eq:Cond-AlgOrdLarge}
		(2M(\eta))^{dn'/k}\abs{P(\lambda)}^{1/k} \leq \abs{\lambda - \eta} \leq (2M(\eta))^{-dn'},
	\end{equation}
	where $d:=[\bbQ(\mathcal D):\bbQ]$. Then $ \eta $ is a zero of $ P $ of order at least $ k $.
\end{proposition}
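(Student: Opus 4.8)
The plan is to argue by contraposition. Assume that $\eta$ is a root of $P$ of order $r$ with $0\le r\le k-1$ (including the case $r=0$, where $P(\eta)\neq0$), and derive a contradiction once $n$ is large. Writing $P=\sum_i a_iX^i$, factor $P(X)=(X-\eta)^rR(X)$ with $R\in\bbQ(\calD,\eta)[X]$, $\deg R\le n'$ and $R(\eta)\neq0$; note that $R(\eta)=P^{(r)}(\eta)/r!=\sum_{i=r}^{n'}\binom{i}{r}a_i\eta^{i-r}$. The strategy is to bound $|R(\eta)|$ from below by a Liouville-type arithmetic estimate and from above by an elementary analytic one, and to see that the two are incompatible.

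For the lower bound I would use Lemma \ref{lem-extension} to fix an algebraic integer $\theta$ with $\bbQ(\theta)=\bbQ(\calD)$ (so $\deg\theta=d$) and choose $L\in\bbN$ with $L\calD\subset\bbZ[\theta]$, which is possible since $\calD$ is finite. Then $LR(\eta)=G(\theta,\eta)$ for some $G\in\bbZ[X,Y]$ with partial degree $<d$ in $X$ and $\le n'$ in $Y$; crucially, because $r\le k-1$ is a fixed bound, the hockey-stick identity $\sum_{i=r}^{n'}\binom{i}{r}=\binom{n'+1}{r+1}\le(n'+1)^k$ keeps $\ell_1(G)$ \emph{polynomial} in $n'$, say $\ell_1(G)\le C_1(n')^k$ with $C_1=C_1(\calD,k)$. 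Lemma \ref{lem:PolyHeights} then gives $H(R(\eta))\le L\,H(G(\theta,\eta))\le C_2(n')^kH(\eta)^{n'}=C_2(n')^kM(\eta)^{n'/\deg\eta}$, and since $0\neq R(\eta)\in\bbQ(\theta,\eta)$ with $[\bbQ(\theta,\eta):\bbQ]\le d\deg\eta\le dn$, Lemma \ref{lem:PolyVal-LB} yields
\[
|R(\eta)| \geq H(R(\eta))^{-d\deg\eta} \geq (C_2(n')^k)^{-dn}\,M(\eta)^{-dn'}.
\]
The essential point is that the exponent of $M(\eta)$ here is exactly $-dn'$, independent of $\deg\eta$; this is why the factor $(2M(\eta))^{dn'}$ is the right one in hypothesis \eqref{eq:Cond-AlgOrdLarge}.

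For the upper bound, $|P(\lambda)|=|\lambda-\eta|^r|R(\lambda)|$ together with the $k$-th power of the left inequality in \eqref{eq:Cond-AlgOrdLarge} gives $|R(\lambda)|\le|\lambda-\eta|^{k-r}(2M(\eta))^{-dn'}\le(2M(\eta))^{-2dn'}$, using $k-r\ge1$ and the right inequality in \eqref{eq:Cond-AlgOrdLarge}. A polynomial-division formula for the coefficients of $R$ (again using $r\le k-1$) bounds $\ell_\infty(R)\le C_3(n')^{k-1}$, so from $|\lambda|,|\eta|<1$ and $\deg R\le n'$ one gets $|R(\eta)-R(\lambda)|\le C_3(n')^{k+1}|\lambda-\eta|\le C_3(n')^{k+1}(2M(\eta))^{-dn'}$, whence $|R(\eta)|\le C_4(n')^{k+1}2^{-dn'}M(\eta)^{-dn'}$. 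Comparing with the lower bound and cancelling the common factor $M(\eta)^{-dn'}$ leaves $2^{dn'}\le C_4(n')^{k+1}(C_2(n')^k)^{dn}$, hence $dn'\le Cn\log n'$ for a constant $C=C(k,\calD)$ once $n$ is large. But $n'\ge(k+2)n^{1+\varepsilon}$ forces $n=O((n')^{1/(1+\varepsilon)})$, so the right-hand side is $o(n')$, which is impossible once $n$ (hence $n'$) exceeds some $N=N(\varepsilon,k,\calD)$. The contradiction shows $r\ge k$.

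The main difficulty, as the above indicates, is the bookkeeping required to make the two estimates line up: one has to follow the field $\bbQ(\calD,\eta)$, of degree $\le d\deg\eta$, closely enough that the power of $M(\eta)$ produced by $H(\eta)^{n'}$ in the height bound cancels the power of $M(\eta)$ appearing in the hypothesis, and one has to verify that every binomial-coefficient sum entering $R$, $G$ and $\ell_\infty(R)$ stays polynomial in $n'$ — which is true only because $r\le k-1$ is a fixed bound; an exponential estimate at any of those points would wreck the argument. Once this is arranged the final quantitative comparison is routine, with the condition $n'\ge(k+2)n^{1+\varepsilon}$ supplying exactly the room that is needed.
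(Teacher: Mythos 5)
Your proof is correct and follows essentially the same route as the paper: a Liouville-type lower bound on the first nonvanishing Taylor coefficient $P^{(r)}(\eta)/r!$ via Lemmas \ref{lem:PolyHeights} and \ref{lem:PolyVal-LB} (with the exponent of $M(\eta)$ coming out as exactly $-dn'$), against an analytic upper bound of the form $(n')^{O(k)}\bigl(|\lambda-\eta|+|P(\lambda)|/|\lambda-\eta|^{k}\bigr)$, contradicting $n'\geq (k+2)n^{1+\varepsilon}$. The only cosmetic difference is that you obtain the upper bound by explicit division $P=(X-\eta)^{r}R$ and the mean value theorem applied to $R$, whereas the paper packages the same estimate as Lemma \ref{lem:Val4Derivative} via Taylor's theorem with Lagrange remainder.
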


The proof of  Proposition \ref{prop:algLargeOrder} relies on the following result  which is contained in the proof of \cite[Lemma 4.6]{RapaportVarju2024}. As was noted by Rapaport and Varj\'u \cite{RapaportVarju2024}, this result was due to V. Dimitrov.

\begin{lemma}\label{lem:Val4Derivative}
	Let $ \lambda, \eta$ be  distinct nonzero numbers in $(-1,1)$. Let $0\neq P \in \calP^{n'}$ for some $n'\in \bbN$. Let $ m $ be the order of vanishing of $P$  at $\eta$, where we allow $m = 0$. Then
	\begin{equation*}
		\Abs{ \frac{P^{(m)}(\eta)}{m!}} \leq (n')^{m+2}D\abs{\lambda - \eta} + \dfrac{\abs{P(\lambda)}}{\abs{\lambda -\eta}^{m}},
	\end{equation*}
	where $ P^{(i)}$ denotes the $i$-th derivative of $ P $ for $ i \in \bbN$, and $ D:=\max \{|s|:\; s\in \mathcal D\} $.
\end{lemma}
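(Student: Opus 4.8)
The plan is to divide out the zero of $P$ at $\eta$ and reduce the inequality to a size estimate for the coefficients of the resulting ``deflated'' polynomial.

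\textbf{Reduction.} Since $\eta$ is a zero of $P$ of order $m$, write $P(X)=(X-\eta)^{m}g(X)$ with $g\in\bbC[X]$ and $\deg g=\deg P-m\le n'-m$. Taylor expansion of $P$ at $\eta$ gives $g(\eta)=P^{(m)}(\eta)/m!$, while evaluating the factorisation at $\lambda$ (using $\lambda\ne\eta$) gives $g(\lambda)=P(\lambda)/(\lambda-\eta)^{m}$. Hence
\[
\Bigl|\tfrac{P^{(m)}(\eta)}{m!}\Bigr|=|g(\eta)|\le|g(\lambda)|+|g(\eta)-g(\lambda)|=\frac{|P(\lambda)|}{|\lambda-\eta|^{m}}+|g(\eta)-g(\lambda)|,
\]
so it suffices to prove $|g(\eta)-g(\lambda)|\le(n')^{m+2}D\,|\lambda-\eta|$.

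\textbf{Coefficient bound for $g$.} Write $P=\sum_{i=0}^{n'}a_{i}X^{i}$ with $a_{i}\in\calD$ (and $a_{i}:=0$ for $i>\deg P$), and $g=\sum_{\ell\ge0}b_{\ell}X^{\ell}$. Applying synthetic division $m$ times, which is legitimate because $\eta$ is a root of order $m$ --- i.e.\ iterating the identity $q_{\ell}=\sum_{r\ge0}\eta^{r}h_{\ell+1+r}$ valid whenever $h=(X-\eta)q$ --- and grouping the resulting terms by the total index shift $j$, a stars-and-bars count yields the closed formula $b_{\ell}=\sum_{j\ge0}\binom{m+j-1}{j}\eta^{j}a_{\ell+m+j}$. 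Since $|a_{i}|\le D:=\max\{|s|:s\in\calD\}$, $|\eta|<1$, and only indices with $\ell+m+j\le n'$ contribute, the hockey-stick identity $\sum_{j=0}^{K}\binom{m+j-1}{j}=\binom{m+K}{m}$ gives
\[
|b_{\ell}|\le D\sum_{j=0}^{n'-m}\binom{m+j-1}{j}=D\binom{n'}{m}\le D\,(n')^{m}.
\]

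\textbf{Conclusion.} From $\eta^{\ell}-\lambda^{\ell}=(\eta-\lambda)\sum_{k=0}^{\ell-1}\eta^{k}\lambda^{\ell-1-k}$ together with $|\eta|,|\lambda|<1$ one gets $|\eta^{\ell}-\lambda^{\ell}|\le\ell\,|\eta-\lambda|$, hence
\[
|g(\eta)-g(\lambda)|=\Bigl|\sum_{\ell\ge1}b_{\ell}(\eta^{\ell}-\lambda^{\ell})\Bigr|\le|\eta-\lambda|\sum_{\ell=1}^{n'}\ell\,|b_{\ell}|\le|\eta-\lambda|\,D(n')^{m}\sum_{\ell=1}^{n'}\ell\le|\eta-\lambda|\,D(n')^{m+2},
\]
using $\sum_{\ell=1}^{n'}\ell\le(n')^{2}$. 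With the reduction step this proves the lemma; the cases $m=0$ (then $g=P$) and $\deg g\le0$ (then the last sum is empty) are immediate. I expect the coefficient estimate to be the only delicate point: a crude bound for a single division by $X-\eta$ loses a factor $(1-|\eta|)^{-1}$, which is worthless since $|\eta|$ may be arbitrarily close to $1$; it is precisely the explicit formula for $b_{\ell}$ combined with the hockey-stick identity that collapses the $m$-fold sum of binomials into the single binomial $\binom{n'}{m}\le(n')^{m}$, producing a bound that is merely polynomial in $n'$.
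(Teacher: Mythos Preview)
Your proof is correct. The route differs from the paper's in the choice of auxiliary polynomial. The paper works with $Q:=P^{(m)}/m!=\sum_{j\ge m}\binom{j}{m}a_jX^{j-m}$, whose coefficient bound $\ell_\infty(Q)\le (n')^mD$ is immediate from $\binom{j}{m}\le (n')^m$; the price is that $Q(\lambda)\neq P(\lambda)/(\lambda-\eta)^m$ in general, so the paper invokes Taylor's theorem with Lagrange remainder to obtain an intermediate point $\xi$ with $Q(\xi)=P(\lambda)/(\lambda-\eta)^m$, and then uses the mean value theorem between $\xi$ and $\eta$. You instead take $g:=P/(X-\eta)^m$, which gives $g(\lambda)=P(\lambda)/(\lambda-\eta)^m$ for free and lets you compare $g$ at $\eta$ and $\lambda$ directly, but you must work harder for the coefficient bound --- hence the iterated synthetic division and the hockey-stick identity collapsing $\sum_{j\le n'-m}\binom{m+j-1}{j}$ to $\binom{n'}{m}$. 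Both approaches land on the same $\ell_\infty$ bound $(n')^m D$ and finish with essentially the same Lipschitz estimate on $(-1,1)$. The paper's version is a bit slicker because the coefficient estimate is a one-liner; yours avoids the Lagrange remainder entirely and would work verbatim for complex $\lambda,\eta$ in the open unit disc.
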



\begin{proof}
%
	
We follow the proof of \cite[Lemma 4.6]{RapaportVarju2024}.	Write $ P = \sum_{j=0}^{n'} a_{j} X^{j} $ with $ a_{j} \in \calD$.
	Denote
	\begin{equation}\label{eq:ExpressQ}
		Q = \frac{P^{(m)}}{m !} = \sum_{j=m}^{n'} \binom{j}{m} a_{j} X^{j-m}.
	\end{equation}
	Then $ Q(\eta) \neq 0 $ by the definition of $ m $. By Taylor's theorem with Lagrange remainder term, there exists some $ \xi $ between $ \lambda $ and $ \eta $ such that
	\begin{equation}\label{eq:Taylor}
		P(\lambda) = Q(\xi) (\lambda - \eta)^{m}
	\end{equation}
	If $ m = 0 $, this holds with $ \xi = \lambda $.
	
	It follows from \eqref{eq:ExpressQ} that $ \deg Q \leq n' $ and $ \ell_\infty(Q) \leq (n')^{m} D $. Then
	\begin{equation*}
		\abs{Q^{(1)}(x)}   \leq (n')^2\ell_\infty(Q)\leq  (n')^{m + 2 } D \mFor  x \in (-1, 1).
	\end{equation*}
	Hence by the mean value theorem,
	\begin{equation*}
		\abs{Q(\eta)} \leq (n')^{m +2}D\abs{\xi - \eta} + \abs{Q(\xi)}.
	\end{equation*}
	From this, $ \abs{\xi - \eta} \leq \abs{\lambda-\eta} $ and   \eqref{eq:Taylor}, we obtain that
	\begin{equation*}\label{eq:PolyVal-UB}
		\abs{Q(\eta)} \leq (n')^{m+2}D\abs{\lambda - \eta} + \dfrac{\abs{P(\lambda)}}{\abs{\lambda -\eta}^{m}},
	\end{equation*}
	which finishes the proof.
\end{proof}

Now we are ready to prove Proposition \ref{prop:algLargeOrder}.

\begin{proof}[Proof of Proposition \ref{prop:algLargeOrder}]
Our argument is adapted from the proof of  \cite[Lemma 4.6]{RapaportVarju2024}.	
Let $\varepsilon, k, \lambda, \eta, n', P$ be given as in the statement of the proposition.  Let $m$ be the order of vanishing of $P$ at $\eta$. We show below that $m\geq k$.

Write $Q = P^{(m)}/(m!) $.  Then $ Q(\eta) \neq 0$.
		From \eqref{eq:ExpressQ} we see that $$Q(\eta)=\widetilde{Q} (s_1,\ldots, s_r, \eta) $$ for an integer  multivariate polynomial
		$\widetilde{Q}$ with  $\ell_1(\widetilde{Q}) \leq (n'+1)(n')^{m} $, where $s_1,\ldots, s_r$ are the elements in $\mathcal D^+$.
	By Lemma \ref{lem:PolyHeights}, the height of $\widetilde{Q}(\eta)$ (cf. \eqref{e-height}) satisfies
	\begin{equation}
	\label{e-HQ}
	\begin{split}
		H(Q(\eta)) &=H\left(\widetilde{Q} (s_1,\ldots, s_r, \eta)\right)\\
		&\leq \ell_1(\widetilde{Q}) \left( \prod_{s\in \calD^{+}} H(s) \right)\cdot H(\eta)^{n'}\\
		&\leq
		(n'+1)(n')^{m} C_{2} M(\eta)^{n'/\deg \eta},
	\end{split}
	\end{equation}
where
$C_{2}:=\prod_{s\in \calD^{+}} H(s)$.
	
Note that by \eqref{eq:ExpressQ}, $Q(\eta)\in \bbQ(\mathcal D, \eta)$.  It follows from  \cite[Propositions 1.15 and 1.21]{Morandi1996} that
\begin{equation}
\label{e-Qeta}
 \deg Q(\eta)=[\bbQ(Q(\eta)):\bbQ]\leq [\bbQ(\mathcal D,\eta): \bbQ]\leq [\bbQ(\mathcal D):\bbQ][\bbQ(\eta):\bbQ]=d \deg(\eta),
 \end{equation}
where $d:=[\bbQ(\mathcal D):\bbQ]$.
Since  $ Q(\eta) \neq 0 $,  $\deg(\eta)\leq n$ and $H(Q(\eta))\geq 1$, it follows from   Lemma \ref{lem:PolyVal-LB} , \eqref{e-Qeta} and \eqref{e-HQ} that
	\begin{equation}\label{eq:PolyVal-LB}
		\abs{Q(\eta)}\geq H(Q(\eta))^{-\deg Q(\eta)} \geq H(Q(\eta))^{-d \deg\eta} \geq \left((n'+1)(n')^{m}C_{2}\right)^{-dn} M(\eta)^{-dn'}.
	\end{equation}

	Suppose on the contrary that $ m < k $. Set $D=\max \{|s|:\; s\in \mathcal D\}$. Then by Lemma \ref{lem:Val4Derivative} and $ \abs{\lambda-\eta} \leq 1 $,
	\begin{align*}
		 |Q(\eta)|=\frac{|P^{(m)}(\eta)|}{m !}&\leq (n')^{m+2}D\abs{\lambda - \eta} + \dfrac{\abs{P(\lambda)}}{\abs{\lambda -\eta}^{m}}\\
&\leq (n')^{k+2}D\abs{\lambda - \eta} + \dfrac{\abs{P(\lambda)}}{\abs{\lambda -\eta}^{k}}.
	\end{align*}
 	Hence
 \begin{align*}
		&\left((n'+1)(n')^{k}C_{2}\right)^{-dn}M(\eta)^{-dn'}\\
&\qquad\mbox{}\leq \abs{Q(\eta)} & \mbox{(by \eqref{eq:PolyVal-LB} and $m<k$)}\\
&\qquad \mbox{}\leq (n')^{k+2}D\abs{\lambda - \eta} + \dfrac{\abs{P(\lambda)}}{\abs{\lambda -\eta}^{k}}\\
&\qquad\mbox{}\leq \left((n')^{k+2}D +1 \right)(2M(\eta))^{-dn'} & \mbox{(by \eqref{eq:Cond-AlgOrdLarge})},
	\end{align*}
	which is simplified to
	\begin{equation*}
		\left((n')^{k+2}D +1 \right) \left((n'+1)(n')^{k}C_{2}\right)^{d n} \geq  2^{d n'}.
	\end{equation*}
	 This contradicts the assumption that $n' \geq (k+2) n^{1+\varepsilon}$, when $ n $ is large enough depending on  $\calD$, $k$ and $\varepsilon$.
\end{proof}


\section{Preliminaries on Shannon and differential entropies}\label{sec:entropies}

In this section, we  briefly review two notions of entropy, and present some of their properties which will be used in the later sections.  For more background material on entropy, the reader is referred to \cite{BreuillardVarju2020,CoverThomas2006}.

Let $ X $ be a random variable in $\euclid[d]$.  If $ X $ is a discrete random variable taking  values in a countable set $ \{x_{j}\}_{j} $, the \textit{Shannon entropy} of $ X $ is defined as
\begin{equation}\label{eq:def-Shannon}
	H(X) := \sum_{j} - \bbP \{ X = x_{j} \} \log \bbP \{ X = x_{j} \}.
\end{equation}
If $ X $ is an absolutely continuous random variable with density $ f \colon \euclid[d] \to [0, \infty) $, the \textit{differential entropy} of $ X $ is  defined as
\begin{equation}\label{eq:def-DiffEntropy}
	H(X) := \int_{\bbR^d} - f(x) \log f(x) \, dx.
\end{equation}
 (Recall the definition of $ H(p) $  for a probability vector $p$ (see \eqref{eq:def-DynQuantities}) and the height $H(\alpha)$  for an algebraic number $\alpha$ (see \eqref{e-height}). The multiple use of $ H(\cdot) $ should cause no confusion, as it will always be clear from the context which type of the input is.)

Let $ A $ be an invertible $d\times d$ real matrix and $ b \in \euclid[d] $. If $ X $ is a discrete random variable,  it is easy to see that
\begin{equation*}\label{eq:Affine-ShannonEntropy}
	H(A X + b) = H(X).
\end{equation*}
If $ X $ is an absolutely continuous random variable with finite differential entropy, then  it follows from the change of variables formula that
\begin{equation}\label{eq:Affine-DiffEntropy}
	H(AX + b)  = H(X) + \log \Abs{\det A}.
\end{equation}


\section{Proof of Theorem \ref{thm:main} }
\label{sec:pf-main}

In this section, we prove Theorem \ref{thm:main} by assuming Theorems \ref{thm:EAA} and \ref{thm:UBGarsia}. The proofs of Theorems \ref{thm:EAA} and \ref{thm:UBGarsia} will be given in the next two sections respectively.

Throughout this section, let $t_1,\ldots, t_m$ be real algebraic numbers and $p=(p_1,\ldots, p_m)$  a probability vector. Let $\nu$ denote the atomic probability measure $\sum_{j=1}^m p_j\delta_{t_j}$.
  For any $0\neq \eta\in (-1,1)$, let $\mu_\eta$ denote the self-similar measure associated with the IFS $\{\eta x+t_j\}_{j=1}^m$ and $p$, and let $h_{\eta, \nu}$ denote the Garsia entropy of $\eta$ and $\nu$ (see \eqref{e-Garsia}).  Let $\mathcal P^n$, $n\in \bbN$, be  defined as in \eqref{eq:def-calPn}.

  The proof of Theorem \ref{thm:main} also relies  on the following two results of Hochman.

  \begin{theorem}
 [{\cite{Hochman2014}}]
  \label{thm:SuperExpClose}
	Let $0\neq  \lambda\in (-1, 1)$. Suppose that $$ \dim \mu_{\lambda} < \min \left\{1, \;\frac{ H(p)}{- \log \abs{\lambda}}\right\} .$$ Then for every $ \delta \in (0,1) $ and for all sufficiently large $ n $ (depending on $ \lambda $ and $ \delta $), there is a polynomial $ 0 \neq P \in \calP^{n} $ such that $ \abs{P(\lambda)} < \delta^{n} $.
\end{theorem}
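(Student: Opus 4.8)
The plan is to deduce the statement directly from Hochman's theorem on the dichotomy between a dimension drop and the failure of exponential separation. Recall the separation quantity from Section~\ref{sec:intro}: since $\Phi$ is homogeneous, the condition $\lambda_{w}=\lambda_{w'}$ is automatic, so for $n\in\bbN$,
\[
	\Delta_n=\min\bigl\{\,\abs{\varphi_{w}(0)-\varphi_{w'}(0)}\colon w,w'\in\{1,\ldots,m\}^n,\ w\neq w'\,\bigr\}.
\]
A homogeneous IFS with ratio $\lambda$ has Lyapunov exponent $\chi=-\log\abs{\lambda}$, so the upper bound appearing in Hochman's theorem is exactly the $\min\{1,H(p)/(-\log\abs{\lambda})\}$ of our hypothesis. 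Hochman's theorem \cite{Hochman2014} then asserts that whenever $\dim\mu_\lambda<\min\{1,H(p)/(-\log\abs{\lambda})\}$, the numbers $\Delta_n$ decay super-exponentially, i.e.\ $\Delta_n^{1/n}\to 0$; equivalently, for every $\delta\in(0,1)$ one has $\Delta_n<\delta^n$ for all sufficiently large $n$ (with constant depending on $\lambda$, the $t_j$, and $\delta$).

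It remains to translate $\Delta_n$ into the polynomial language. Iterating $\varphi_j(x)=\lambda x+t_j$ gives, for a word $w=w_1\cdots w_n\in\{1,\ldots,m\}^n$,
\[
	\varphi_{w}(0)=\sum_{k=1}^{n}\lambda^{k-1}t_{w_k},
\]
so for two distinct words $w,w'$ of length $n$ we have $\varphi_{w}(0)-\varphi_{w'}(0)=P_{w,w'}(\lambda)$ with
\[
	P_{w,w'}(X):=\sum_{k=1}^{n}(t_{w_k}-t_{w'_k})X^{k-1}.
\]
Each coefficient $t_{w_k}-t_{w'_k}$ lies in $\calD$ and $\deg P_{w,w'}\le n-1$, so $P_{w,w'}\in\calP^{n-1}\subseteq\calP^{n}$. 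Furthermore $P_{w,w'}\not\equiv 0$: picking an index $k_0$ with $w_{k_0}\neq w'_{k_0}$ and using that the translations $t_1,\ldots,t_m$ are pairwise distinct, the coefficient $t_{w_{k_0}}-t_{w'_{k_0}}$ of $X^{k_0-1}$ is nonzero.

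Combining the two steps concludes the proof: given $\delta\in(0,1)$, take $n$ large enough that $\Delta_n<\delta^n$, let $w\neq w'$ be words of length $n$ attaining the minimum defining $\Delta_n$, and set $P:=P_{w,w'}$; then $0\neq P\in\calP^{n}$ and $\abs{P(\lambda)}=\Delta_n<\delta^n$. I expect essentially all of the difficulty to sit inside Hochman's theorem, which is used as a black box here; the only point in the reduction that requires care is the non-vanishing of the witnessing polynomial $P_{w,w'}$, which is precisely where the distinctness of the translations $t_1,\ldots,t_m$ (equivalently, the absence of length-one exact overlaps) enters.
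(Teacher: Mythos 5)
Your reduction is correct and is precisely the standard derivation that the paper's bare citation to Hochman is invoking: the paper gives no proof of this statement, relying on Hochman's theorem that a dimension drop below $\min\{1,H(p)/\chi\}$ forces $\Delta_n^{1/n}\to 0$, together with the observation that $\varphi_w(0)-\varphi_{w'}(0)$ is the value at $\lambda$ of a nonzero polynomial of degree at most $n-1$ with coefficients in $\calD$. Your reindexing $\varphi_w(0)=\sum_k\lambda^{k-1}t_{w_k}$ (versus $\sum_k\lambda^{n-k}t_{w_k}$) is harmless since it only reverses words, and you correctly isolate the one point needing the distinctness of the $t_j$, which is implicit in the theorem (and automatic in the paper's application, where $\Phi$ has no exact overlaps).
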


\begin{theorem} [{\cite{Hochman2014}}]
\label{thm:DimFormula}  For each  nonzero algebraic number $\eta\in (-1, 1)$,
	$$	
		\dim \mu_{\eta} = \min \left \{ 1,\; \frac{h_{\eta,\nu}}{- \log \abs{\eta} }\right \}.
	$$
If in addition the IFS $\{\eta x+t_j\}_{j=1}^m$ has no exact overlaps, then
$$	
		\dim \mu_{\eta} = \min \left \{ 1,\; \frac{H(p)}{- \log \abs{\eta} }\right \}.
	$$	
\end{theorem}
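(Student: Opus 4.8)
The plan is to obtain both displayed formulas from Hochman's results in \cite{Hochman2014}, the first being the substantive statement and the second following formally from it. Write $\Phi_\eta=\{\varphi_j\}_{j=1}^m$ with $\varphi_j(x)=\eta x+t_j$, and $\varphi_J=\varphi_{j_1}\circ\cdots\circ\varphi_{j_n}$ for a word $J=j_1\cdots j_n$. The first thing I would record is that the \emph{random walk entropy} of $\Phi_\eta$, namely $\lim_n \tfrac1n H(\varphi_{\xi_1}\circ\cdots\circ\varphi_{\xi_n})$ with $(\xi_k)$ independent of common law $p$, equals $h_{\eta,\nu}$. Indeed $\varphi_J(x)=\eta^n x+\sum_{k=1}^{n}\eta^{k-1}t_{j_k}$, so all level-$n$ compositions share the linear part $\eta^n$ and are determined by their translation parts; hence $H(\varphi_{\xi_1}\circ\cdots\circ\varphi_{\xi_n})=H(\sum_{k=0}^{n-1}\eta^k\zeta_k)$, where $\zeta_k=t_{\xi_{k+1}}$ are independent with common law $\nu=\sum_j p_j\delta_{t_j}$, and dividing by $n$ and letting $n\to\infty$ gives the claim. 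Two consequences: first, $h_{\eta,\nu}=H(p)$ exactly when $J\mapsto\varphi_J$ is injective on every level, i.e.\ exactly when $\Phi_\eta$ has no exact overlaps; second, if $\Phi_\eta$ has an exact overlap at some level $n_0$, then $H(\varphi_{\xi_1}\circ\cdots\circ\varphi_{\xi_{n_0}})<n_0H(p)$ and subadditivity of $n\mapsto H(\varphi_{\xi_1}\circ\cdots\circ\varphi_{\xi_n})$ give $h_{\eta,\nu}<H(p)$. In particular the second displayed formula will follow from the first once the no-overlap case of the first is settled.

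For the first formula, the inequality $\dim\mu_\eta\le\min\{1,h_{\eta,\nu}/(-\log|\eta|)\}$ always holds: the bound by $1$ is trivial since $\mu_\eta$ lives on $\bbR$, and the bound by the random walk entropy over the Lyapunov exponent $-\log|\eta|$ is the standard entropy upper bound for self-similar measures. For the reverse inequality I would quote Hochman's theorem \cite{Hochman2014}: it gives $\dim\mu_\eta=\min\{1,h_{\eta,\nu}/(-\log|\eta|)\}$ whenever $\Phi_\eta$ satisfies the exponential separation condition. Since $\eta$ and all the $t_j$ are algebraic, \cite[Theorem 1.5]{Hochman2014} tells us that $\Phi_\eta$ satisfies the exponential separation condition unless it has an exact overlap. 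Hence, when $\Phi_\eta$ has no exact overlaps we get $\dim\mu_\eta=\min\{1,h_{\eta,\nu}/(-\log|\eta|)\}$, and since in that case $h_{\eta,\nu}=H(p)$ this also equals $\min\{1,H(p)/(-\log|\eta|)\}$; so both displayed formulas hold.

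There remains the first formula when $\Phi_\eta$ \emph{does} have exact overlaps. Here Hochman's theorem cannot be applied directly (exact overlaps force a total failure of exponential separation), and the natural route is to replace $\Phi_\eta$ by a homogeneous IFS $\Phi'$ with algebraic parameters, no exact overlaps, the same self-similar measure $\mu_\eta$, and with $h_{RW}(\Phi')/\chi(\Phi')=h_{\eta,\nu}/(-\log|\eta|)$ — obtained by passing to a free generating set of the affine semigroup generated by $\Phi_\eta$ together with the induced weights — and then apply the no-overlap case to $\Phi'$. I expect this reduction to be the main obstacle: one must verify that such a finite free presentation exists and that the random walk entropy is genuinely preserved under the induced weighting. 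For this reason it will probably be cleanest in the write-up simply to quote the form of Hochman's theorem that already absorbs the treatment of overlaps, namely that $\dim\mu=\min\{1,h_{RW}/\chi\}$ holds for \emph{every} self-similar measure on $\bbR$ all of whose defining parameters are algebraic.
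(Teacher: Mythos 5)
The paper offers no proof of this statement: it is quoted verbatim from \cite{Hochman2014}, which is exactly where your argument ultimately lands, so the approaches coincide. Your preliminary bookkeeping is correct --- for a homogeneous system the random walk entropy equals the Garsia entropy $h_{\eta,\nu}$, one has $h_{\eta,\nu}=H(p)$ precisely when there are no exact overlaps, and the second display therefore follows from the first --- and your own caveat is well placed: the ``free generating set'' reduction for the overlapping case is not viable in general, so citing the form of Hochman's theorem for algebraic parameters that already absorbs exact overlaps is the right (and the paper's) move.
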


Now we are ready to prove Theorem \ref{thm:main}.

\begin{proof}[Proof of Theorem \ref{thm:main}]  We follow the proof of \cite[Theorem A.8]{RapaportVarju2024} with slight modifications. For the reader's convenience, we provide the full details.

By Theorem \ref{thm:DimFormula}, we may assume that $ \lambda$ is transcendental. Since all $t_j$ are algebraic numbers,   the IFS $ \{\lambda x+t_j\}_{j=1}^m $ has no exact overlaps. Suppose on the contrary that
\begin{equation*}
	\dim \mu_{\lambda} < \min \left\{1,\; \frac{H(p)}{- \log \abs{\lambda} } \right\}.
\end{equation*}
Let $ \varepsilon >0 $ such that
\begin{equation}
\label{e-t1}
	\dim \mu_{\lambda} + 2\varepsilon < \min\left\{1, \;\frac{H(p)}{ - \log \abs{\lambda} }\right\}.
\end{equation}
Let $ M > 0 $ be large with respect to $t_1,\ldots, t_m, \lambda, p $ and $ \varepsilon $. Let $ n_{0} \in \bbN $ be large depending on $ M $.

By Theorem \ref{thm:EAA}, there exist  an integer $n>n_0$  and a nonzero algebraic number $\eta$ in $(-1,1)$ such that  $\eta$ is a  root of some polynomial in $ \calP^{n} $ satisfying
\begin{equation}\label{eq:eta-property}
	\dim \mu_{\eta} < \dim \mu_{\lambda} + \varepsilon \mAnd \abs{\lambda - \eta} < \exp(-n^3).
\end{equation}
We can assume $ n_{0} $ is large enough so that $ \abs{\eta} < (1+\abs{\lambda})/2 $.
 By \eqref{e-t1} and \eqref{eq:eta-property},
\begin{equation}
\label{e-t2}
\dim \mu_{\eta}< \min\left\{1, \;\frac{H(p)}{ - \log \abs{\lambda} }\right\}-\epsilon.
\end{equation}

Since $\eta$ is algebraic, it follows from  Theorem \ref{thm:DimFormula} that
	\begin{equation*}
		\dim \mu_{\eta} = - \frac{h_{\eta,\nu}}{\log\abs{\eta}}.
	\end{equation*}
From this, \eqref{e-t2} and Theorem \ref{thm:UBGarsia}, we may assume that $M(\eta)<M$.


Set $d= [\bbQ(\calD) : \bbQ]$. Since $ Q(\eta) = 0 $ for some $ Q \in \calP^{n} $, it follows that
$$[\bbQ(\mathcal D, \eta):\bbQ(\mathcal D)]\leq n.$$
Hence
$$
\deg(\eta)= [\bbQ(\eta):\bbQ]\leq [\bbQ(\mathcal D,\eta):\bbQ]= [\bbQ(\mathcal D,\eta):\bbQ(\mathcal D)][\bbQ(\mathcal D):\bbQ]\leq dn,
$$
where the second equality is a basic property about the degree of field extension (see e.g. ~ \cite[Proposition 1.20]{Morandi1996}).
 Since $ \lambda $ is transcendental and $ \eta $ is algebraic, it follows that  $ \abs{\lambda - \eta} > 0 $. Let $ n' $  be the unique integer  such that
\begin{equation}\label{eq:choose-n'}
	(2M)^{-d(n' + 1)} \leq \abs{\lambda-\eta}  < (2M)^{-dn'}.
\end{equation}
Then by this and the second inequality in \eqref{eq:eta-property},
\begin{equation*}
	d(n' + 1) \log(2M) \geq -\log\abs{\lambda-\eta} > n^{3}.
\end{equation*}
Thus for $ n_{0} $ large enough,
\begin{equation}\label{eq:degLemMulti}
	n' \geq (M+2) (d n)^{2}.
\end{equation}

Applying Theorem \ref{thm:SuperExpClose} with $ \delta = (2M)^{-3Md} $, we  see that  for $ n_{0} $ large enough there exists $ 0\neq P \in \calP^{n'} $ such that $ \abs{P(\lambda)} \leq (2M)^{-3Mdn'} $. Then by \eqref{eq:choose-n'},
\begin{equation*}
	\abs{\lambda-\eta} \geq (2M)^{-d(n'+1)} \geq (2M)^{dn'}\abs{P(\lambda)}^{1/M} \geq (2M)^{dn'/M}\abs{P(\lambda)}^{1/M}.
\end{equation*}
Combining this with \eqref{eq:choose-n'} yields
\begin{equation*}\label{eq:speed-n'}
	 (2M)^{dn'/M}\abs{P(\lambda)}^{1/M} \leq \abs{\lambda-\eta} \leq (2M)^{-dn'}.
\end{equation*}
  From this, \eqref{eq:degLemMulti} and  Proposition \ref{prop:algLargeOrder}  (in which we replace $n$ by  $d n $, and $k$ by  $ M $), we conclude that $ \eta $ is a zero of $ P $ of order at least $ M $.

On the other hand, since $ \abs{\eta} < (1+\abs{\lambda}) / 2 <1$,  it follows from Proposition \ref{lem:NumRoots} that there exists  $ N > 0 $ (only depending on $t_1,\ldots, t_m$ and $\lambda$) such that $ \eta $ is a zero of $ P $ of order at most $ N $. This leads to a contradiction by letting $ M > N $.
\end{proof}

\section{Proof of Theorem \ref{thm:EAA}} \label{sec:pf-EAA}

\begin{proof}[Proof of  Theorem \ref{thm:EAA}]
The proof  is almost identical to that of \cite[Theorem A.2]{RapaportVarju2024}, except some of the involved auxiliary results in Appendix A.1 of  \cite[Theorem A.2]{RapaportVarju2024} should be slightly revised.

Indeed, Theorem \ref{thm:EAA} is proved in \cite[Theorem A.2]{RapaportVarju2024} in the setting of homogeneous IFSs on $\bbR$ with positive contraction ratio and rational translations. Actually, the proof of \cite[Theorem A.2]{RapaportVarju2024} is based on some auxiliary results (see Propositions A.3, A.5 and A.7, Lemmas A.4 and A.6 in \cite{RapaportVarju2024}), which adapts and generalizes the main results of \cite{BreuillardVarju2019}. As was pointed out in \cite[Appendix A.1]{RapaportVarju2024}, the statements of \cite[Proposition A.3, Lemmas A.4 and A.6]{RapaportVarju2024} hold in the general setting of homogeneous IFSs on $\bbR$. Notice that  \cite[Proposition A.5 and A.7]{RapaportVarju2024} are built on a separation property (see \cite[Lemma 4.1]{RapaportVarju2024}) for roots of integer polynomials. By a similar separation property (see Proposition \ref{prop:SepRoots}) for roots of polynomials with algebraic coefficients, we are able to extend \cite[Proposition A.5 and A.7]{RapaportVarju2024} to the general setting of Theorem \ref{thm:EAA}; see Propositions \ref{prop-n1} and \ref{prop-n2}. Using these (revised) auxiliary results, the exact arguments of \cite[Theorem A.2]{RapaportVarju2024} give the proof Theorem \ref{thm:EAA}. Since the proof of \cite[Theorem A.2]{RapaportVarju2024} is rather long, we do not repeat it here and we refer to \cite{RapaportVarju2024} for the full details.
\end{proof}

Below we present two propositions, which are the extensions of \cite[Propositions A.5 and  A.7]{RapaportVarju2024} to our setting respectively.

Following \cite[Section 2]{BreuillardVarju2019} and \cite[Section 2]{Varju2019a}, we introduce the notion of  entropy at a given scale.
Let $ X $ be a bounded random variable in $\bbR$ with law $\nu$. For $ r > 0 $, the \textit{entropy of $ X $ at scale $ r $} is defined by
\begin{equation*}\label{eq:def-AvgEntropy}
	H(X; r) := \int_{0}^{1} H(\fl{ X/r + t }) \, dt,
\end{equation*}
 where $\fl{x}$ stands for the largest integer not exceeding $x$. We also write $H(\nu;r)$ in place of $H(X;r)$.
 As was noted in \cite{BreuillardVarju2019,Varju2019a}, the idea of the above averaging procedure originates in Wang's paper \cite{Wang2011}.

\begin{proposition}
\label{prop-n1}
For every $\varepsilon\in (0,1/2)$ there exists $C>0$ (depending on $\varepsilon$, $t_1,\ldots, t_m$ and $p_1,\ldots, p_m$) such that the following holds for all $n\geq N(\varepsilon,C)$. Let $0<r<n^{-Cn}$ and $\lambda \in (\varepsilon-1,-\varepsilon)\cup(\varepsilon, 1-\varepsilon)$ be given, and suppose that
$\frac{1}{n} H(\mu_\lambda^{(n)}; r)<H(p)$. Then there exists $0\neq \eta\in (-1,1)$, which is a root of a nonzero polynomial in ${\mathcal P}^n$ such that
$$
|\lambda-\eta|<r^{1/C}\quad \mbox{ and }\quad H(\mu_\eta^{(n)})\leq H(\mu_\lambda^{(n)}; r).
$$
\end{proposition}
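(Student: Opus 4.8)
The plan is to follow the proof of \cite[Proposition~A.5]{RapaportVarju2024} (adapting \cite{BreuillardVarju2019}) almost verbatim, with our Proposition~\ref{prop:SepRoots} in the role of the root-separation estimate \cite[Lemma~4.1]{RapaportVarju2024} and our Corollary~\ref{cor-temp1} in the role of the root-perturbation estimate \cite[Lemma~4.3]{RapaportVarju2024}. Throughout, write $\nu=\sum_{j=1}^m p_j\delta_{t_j}$ and, for $s=(s_0,\ldots,s_{n-1})\in\{t_1,\ldots,t_m\}^n$, $\pi_\lambda(s)=\sum_{j=0}^{n-1}s_j\lambda^j$, so that $\mu_\lambda^{(n)}$ is the law of $\sum_{j=0}^{n-1}\xi_j\lambda^j$ (with $(\xi_j)$ i.i.d.\ $\sim\nu$), i.e.\ $\mu_\lambda^{(n)}=(\pi_\lambda)_*\nu^{\otimes n}$, and $H(\mu_\lambda^{(n)};r)=\int_0^1 H(\lfloor\pi_\lambda(S)/r+t\rfloor)\,dt$ with $S\sim\nu^{\otimes n}$. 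I may assume the $t_j$ are pairwise distinct (this is the only case used, and it holds whenever $\Phi$ has no exact overlaps); then for $s\neq s'$ the polynomial $P_{s,s'}(X):=\sum_{j=0}^{n-1}(s_j-s'_j)X^j$ is a nonzero element of $\calP^{n-1}\subseteq\calP^n$ with $P_{s,s'}(\lambda)=\pi_\lambda(s)-\pi_\lambda(s')$.

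First I would fix $C=C(\varepsilon,\calD)$ large --- say $C>(M(\calD)+1)/c(\varepsilon)$, with $M(\calD)$ the constant of Proposition~\ref{prop:SepRoots} and $c(\varepsilon)$ that of Corollary~\ref{cor-temp1} --- so that for $n\geq N(\varepsilon,C)$ and $0<r<n^{-Cn}$ one has $r<\varepsilon^n 2^{-n}$ and $(2^n\varepsilon^{-n}r)^{c(\varepsilon)}<\min\{r^{1/C},\,n^{-M(\calD)n}\}$. The key consequence, combining Corollary~\ref{cor-temp1} with Proposition~\ref{prop:SepRoots} (recall $\varepsilon\leq|\lambda|\leq 1-\varepsilon$), is: if $0\neq Q\in\calP^n$ satisfies $0<|Q(\lambda)|<r$, then $Q$ has a root within $n^{-M(\calD)n}$ of $\lambda$, and there is \emph{at most one} algebraic number that close to $\lambda$ which is a root of a polynomial in $\calP^n$; being unique it is fixed by complex conjugation, hence real, and for $n$ large it lies in $(-1,1)\setminus\{0\}$.

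With this in hand I would produce $\eta$, splitting into two cases. If $\lambda$ is a root of some nonzero $Q\in\calP^{n-1}$, I set $\eta:=\lambda$: the uniqueness statement then forces every $0\neq Q\in\calP^{n-1}$ with $Q(\lambda)\neq0$ to satisfy $|Q(\lambda)|\geq r$, so the distinct atoms of $\mu_\lambda^{(n)}$ are $r$-separated and $H(\mu_\eta^{(n)})=H(\mu_\lambda^{(n)})=H(\mu_\lambda^{(n)};r)$, while $|\lambda-\eta|=0<r^{1/C}$ trivially. Otherwise $\lambda$ is not a root of any polynomial in $\calP^{n-1}$, and here I invoke the hypothesis: since $H(\mu_\lambda^{(n)};r)\leq H(\mu_\lambda^{(n)})\leq H(\nu^{\otimes n})=nH(p)$, the assumption $\tfrac{1}{n} H(\mu_\lambda^{(n)};r)<H(p)$ makes $s\mapsto\lfloor\pi_\lambda(s)/r+t\rfloor$ non-injective on $\{t_1,\ldots,t_m\}^n$ for $t$ in a set of positive measure, which yields $s\neq s'$ with $0<|P_{s,s'}(\lambda)|<r$; Corollary~\ref{cor-temp1} and the uniqueness above then give $\eta\in(-1,1)\setminus\{0\}$ with $P_{s,s'}(\eta)=0$ and $|\lambda-\eta|<r^{1/C}$, $\eta$ being a root of a nonzero polynomial in $\calP^n$.

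It remains to verify $H(\mu_\eta^{(n)})\leq H(\mu_\lambda^{(n)};r)$ in this second case. I would show that for \emph{every} $t\in[0,1)$ the partition of $\{t_1,\ldots,t_m\}^n$ into level sets of $\pi_\eta$ is coarser than its partition into preimages under $\pi_\lambda$ of the intervals $[kr-tr,(k+1)r-tr)$, $k\in\bbZ$: if $\pi_\lambda(u),\pi_\lambda(u')$ lie in a common such interval then $|P_{u,u'}(\lambda)|<r$, whence $P_{u,u'}(\eta)=0$ --- immediately if $P_{u,u'}\equiv0$, and otherwise because $0<|P_{u,u'}(\lambda)|<r$ (as $\lambda$ is not a $\calP^{n-1}$-root), so Corollary~\ref{cor-temp1} produces a root of $P_{u,u'}$ within $n^{-M(\calD)n}$ of $\lambda$, which must coincide with $\eta$ by Proposition~\ref{prop:SepRoots}. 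Since a coarser partition carries no more Shannon entropy, $H(\mu_\eta^{(n)})\leq H(\lfloor\pi_\lambda(S)/r+t\rfloor)$ for all $t$, and integrating over $t$ gives the bound. The step I expect to be the real heart --- and essentially the only place where the passage from integer to algebraic coefficients matters --- is this uniqueness of the nearby approximant: it is what upgrades a single $r$-scale coincidence to the statement that \emph{all} near-coincidences resolve to the \emph{same} $\eta$, and it rests entirely on Proposition~\ref{prop:SepRoots} (whose proof, via Lemma~\ref{lem:ToUseMahler}, is where the degree $d=[\bbQ(\calD):\bbQ]$ and Mahler's theorem enter); everything else is the bookkeeping of \cite{RapaportVarju2024}.
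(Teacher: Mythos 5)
Your proposal is correct and follows essentially the same route as the paper's proof: the entropy hypothesis shows the set of nonzero $P\in\calP^{n}$ with $|P(\lambda)|\leq r$ is nonempty, Corollary~\ref{cor-temp1} produces a root of each such $P$ near $\lambda$, Proposition~\ref{prop:SepRoots} forces all these roots (and the complex conjugate) to coincide with a single real $\eta$, and the coarsening-of-partitions observation gives $H(\mu_\eta^{(n)})\leq H(\mu_\lambda^{(n)};r)$. The only, harmless, difference is your explicit case split on whether $\lambda$ is itself a root of a polynomial in $\calP^{n-1}$, which the paper's uniform argument subsumes.
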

\begin{proof}
Here we follow the arguments in the proof of \cite[Proposition A.5]{RapaportVarju2024} with slight modifications.

Let $\varepsilon\in (0,1)$ and let $C>1$ be large with respect to $\varepsilon$. Let  $n\geq $ be large with $C$ and let $r$ and $\lambda$ be given as in the statement of the proposition.

Let $0\leq s\leq 1$ be with
\begin{equation}
\label{e-temp1}
H\left( \left\lfloor r^{-1}\sum_{k=0}^{n-1} \xi_k\lambda^k+s \right\rfloor \right)\leq H(\mu_{\lambda}^{(n)};r)<nH(p),
\end{equation}
where $(\xi_k)_{k=0}^{n-1}$ is a finite sequence of independent random variables with common law $\nu=\sum_{j=1}^m p_j\delta_{t_j}$.
Let $\mathcal A$ be the set of all nonzero $P\in \mathcal P^n$ with $|P(\lambda)|\leq r$. By \eqref{e-temp1},
$\mathcal A$ is nonempty.

Given $P\in \mathcal A$ it follows that
Corollary \ref{cor-temp1}
that there exists $\eta_{P}\in\bbC$ with $P(\eta_{P})=0$ and
\[
|\eta_{P}-\lambda|\le(2^{n}\varepsilon^{-n}r)^{C^{-1/4}}.
\]
From $r<n^{-Cn}$, since $C$ is large with respect to $\varepsilon$, and
since $n$ is large with respect to $C$, it follows that we may assume
$|\lambda-\eta_{P}|<r^{C^{-1/2}}$.

For $Q,P\in\mathcal{A}$,
\[
|\eta_{P}-\eta_{Q}|\le|\eta_{P}-\lambda|+|\lambda-\eta_{Q}|\le2r^{C^{-1/2}}<2n^{-C^{1/2}n}.
\]
Thus, by Proposition \ref{prop:SepRoots} and by assuming
that $C$ is large enough, it follows that $\eta_{P}=\eta_{Q}$. Write
$\eta$ for this common value, then $P(\eta)=0$ for all $P\in\mathcal{A}$.
From this, \eqref{e-temp1} and by the definition
of $\mathcal{A}$,
\[
H(\mu_{\eta}^{(n)})\le H(\mu_{\lambda}^{(n)};r).
\]

Since $\lambda\in\bbR$ we have $|\overline{\eta}-\lambda|=|\eta-\lambda|$,
and so $|\eta-\overline{\eta}|\le2n^{-C^{1/2}n}$. For $P\in\mathcal{A}$
we clearly have $P(\overline{\eta})=0$. Thus, another application
of Proposition \ref{prop:SepRoots} gives $\eta=\overline{\eta}$.
Since $\lambda\in (\varepsilon-1 ,-\varepsilon)\cup (\varepsilon ,1-\varepsilon)$ we may assume $0\neq \eta\in(-1,1)$,
which completes the proof of the proposition.
\end{proof}

\begin{proposition}
\label{prop-n2}
For every $\varepsilon\in (0,1/2)$ there exists $C>0$ (depending on $\varepsilon$, $t_1,\ldots, t_m$ and $p_1,\ldots, p_m$) such that the following holds for all $n\geq N(\varepsilon,C)$. Let $\lambda \in (\varepsilon-1,-\varepsilon)\cup(\varepsilon, 1-\varepsilon)$  and suppose that
 there exists $0\neq \eta\in \bbC$, which is a root of a nonzero polynomial in ${\mathcal P}^n$, such that
$
|\lambda-\eta|<n^{-Cn}$. Then $\frac{1}{n} H(\mu_\lambda^{(n)}; r)=H(p)$ for all $r\leq |\lambda-\eta|^C$.
\end{proposition}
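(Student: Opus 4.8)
The plan is to reduce the proposition to a single separation estimate: that distinct atoms of $\mu_\lambda^{(n)}$ are at distance at least $|\lambda-\eta|^C$ from one another. Recall that $\mu_\lambda^{(n)}$ is the law of $\Pi_\lambda(\xi_0,\dots,\xi_{n-1})$, where $\Pi_\lambda\colon(s_0,\dots,s_{n-1})\mapsto\sum_{k=0}^{n-1}s_k\lambda^k$ and $\xi_0,\dots,\xi_{n-1}$ are i.i.d.\ with law $\nu=\sum_{j=1}^m p_j\delta_{t_j}$. Once we know $|\Pi_\lambda(w)-\Pi_\lambda(w')|\geq|\lambda-\eta|^C$ for all distinct $w,w'\in\{1,\dots,m\}^n$, the map $\Pi_\lambda$ is injective on the support of $\nu^{\otimes n}$, so $H(\mu_\lambda^{(n)})=nH(p)$ (the $t_j$ being distinct); moreover, for $r\leq|\lambda-\eta|^C$ and any $t\in[0,1]$, distinct atoms $x\neq x'$ of $\mu_\lambda^{(n)}$ satisfy $|x/r-x'/r|\geq 1$, hence $\lfloor x/r+t\rfloor\neq\lfloor x'/r+t\rfloor$; thus $H(\lfloor\Pi_\lambda(\xi_0,\dots,\xi_{n-1})/r+t\rfloor)=H(\mu_\lambda^{(n)})=nH(p)$ for every $t$, and integrating over $t\in[0,1]$ gives $H(\mu_\lambda^{(n)};r)=nH(p)$. (If $\lambda=\eta$ the statement is vacuous, so we may assume $\delta:=|\lambda-\eta|>0$.)

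For the separation estimate, fix distinct $w,w'$ and set $Q:=\sum_{k=0}^{n-1}(t_{w_k}-t_{w'_k})X^k$, so that $Q\in\mathcal P^{n-1}\subset\mathcal P^n$, $Q\neq 0$, and $\Pi_\lambda(w)-\Pi_\lambda(w')=Q(\lambda)$. Writing $Q=c\prod_i(X-\zeta_i)$ with leading coefficient $|c|\geq u:=\min\{|s|:0\neq s\in\mathcal D\}$ and $\deg Q\leq n-1$, we have $|Q(\lambda)|=|c|\prod_i|\lambda-\zeta_i|$, and I would split the roots into those equal to $\eta$ (of total multiplicity $m'$), the remaining ones with $|\lambda-\zeta_i|\leq\varepsilon/2$, and the remaining ones with $|\lambda-\zeta_i|>\varepsilon/2$. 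The key point is that $\eta$ and every root in the first two groups lies in the disc $|z|\leq 1-\varepsilon/2$ (since $|\lambda|\leq 1-\varepsilon$ and $\delta<n^{-Cn}$ is tiny), so by Proposition~\ref{lem:NumRoots} applied with $\varepsilon/2$ their total number, counted with multiplicity, is at most $K':=k(\varepsilon/2,\mathcal D)$; in particular $m'\leq K'$ and the second group has at most $K'$ roots. For a root $\zeta_i\neq\eta$ of the second group, $\zeta_i$ is a root of a polynomial in $\mathcal P^n$ distinct from $\eta$, so Proposition~\ref{prop:SepRoots} (distinct such roots are $>2n^{-Mn}$ apart, with $M=M(\mathcal D)$) together with the triangle inequality gives $|\lambda-\zeta_i|>n^{-Mn}$ for $n$ large, provided $C>M$. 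The third group has at most $n-1$ roots, each contributing a factor $>\varepsilon/2$. Combining, $|Q(\lambda)|\geq u\,\delta^{m'}(n^{-Mn})^{K'}(\varepsilon/2)^{n-1}\geq u\,\delta^{K'}n^{-MK'n}(\varepsilon/2)^n$.

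It remains to choose $C=C(\varepsilon,\mathcal D)$, i.e.\ depending on $\varepsilon,t_1,\dots,t_m$, so that this last quantity is $\geq\delta^C$. Taking $C>K'$ with $C(C-K')\geq MK'+1$ and using $\delta<n^{-Cn}$, the required inequality $u\,n^{-MK'n}(\varepsilon/2)^n\geq\delta^{C-K'}$ reduces, after taking logarithms, to $n\log n\,\bigl(C(C-K')-MK'\bigr)+n\log(\varepsilon/2)+\log u\geq 0$, which holds for all $n\geq N(\varepsilon,C)$ since the coefficient of $n\log n$ is at least $1$. This yields $|Q(\lambda)|\geq\delta^C$ and finishes the separation estimate, hence the proposition.

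I expect the main obstacle to be the bookkeeping in the separation estimate, specifically the realization that one must invoke Proposition~\ref{lem:NumRoots} to cap the number of roots of $Q$ clustering near $\lambda$ by a constant: a crude product bound over all $\leq n-1$ conceivable nearby roots would lose a factor of size $n^{-\Theta(n^2)}$ and defeat the argument, whereas with the constant cap the total loss is only $n^{-O(n)}$, which the hypothesis $|\lambda-\eta|<n^{-Cn}$ is just strong enough to absorb once $C$ is taken large relative to $M(\mathcal D)$ and $k(\varepsilon/2,\mathcal D)$. The remaining steps are routine estimates.
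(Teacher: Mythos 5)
Your proof is correct. The one implicit assumption you flag (distinctness of the $t_j$, so that $H(\mu_\lambda^{(n)})=nH(p)$ once the atoms are separated) is also needed for the proposition to be true at all, so it is a standing hypothesis rather than a gap.

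Your route is structurally different from the paper's, though it rests on the same two inputs. The paper argues by contradiction: if $\tfrac1n H(\mu_\lambda^{(n)};r)<H(p)$ for some $r\le|\lambda-\eta|^C$, then (as in the proof of Proposition~\ref{prop-n1}) there is a nonzero $P\in\calP^n$ with $|P(\lambda)|\le r$, whence Corollary~\ref{cor-temp1} produces a root $\eta'$ of $P$ with $|\lambda-\eta'|<r^{1/C}\le|\lambda-\eta|$; then $\eta'\neq\eta$ yet $|\eta-\eta'|\le 2n^{-Cn}$, contradicting Proposition~\ref{prop:SepRoots}. You instead prove the contrapositive content directly and quantitatively: you lower-bound $|Q(\lambda)|$ for every difference polynomial $Q$ by factoring it, using Proposition~\ref{lem:NumRoots} to cap by a constant the number of roots near $\lambda$ and Proposition~\ref{prop:SepRoots} to push every such root other than $\eta$ to distance $>n^{-Mn}$ from $\lambda$, so that all atoms of $\mu_\lambda^{(n)}$ are $|\lambda-\eta|^C$-separated and the entropy identity follows by injectivity of $x\mapsto\fl{x/r+t}$ on the atoms. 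In effect you re-derive, in its contrapositive form, the content of Corollary~\ref{cor-temp1} (``small polynomial value forces a nearby root, and there can be only one candidate''), which the paper uses as a black box. What your version buys is a self-contained, non-contradiction argument yielding the explicit separation $|\Pi_\lambda(w)-\Pi_\lambda(w')|\ge|\lambda-\eta|^C$, which is slightly stronger than the stated conclusion; what the paper's version buys is brevity, since Corollary~\ref{cor-temp1} is already needed for Proposition~\ref{prop-n1}.
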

\begin{proof}
Here we repeat the proof of \cite[Proposition A.7]{RapaportVarju2024} with trivial modifications.  Let $\varepsilon\in (0,1/2)$, and let $C>1$ be large with respect to  $\varepsilon$,
let $n\ge1$ be large with respect to $C$, and let $\lambda$ and
$\eta$ be as in the statement of the proposition.

Suppose to the contrary
that there exists $0<r\le|\lambda-\eta|^{C}$ with $\frac{1}{n}H(\mu_{\lambda}^{(n)};r)<H(p)$.
By Corollary \ref{cor-temp1}, there exists
$\eta'\in(0,1)$, which is a root of a nonzero polynomial in $\mathcal{P}^{n}$,
such that $|\lambda-\eta'|<r^{1/C}\le|\lambda-\eta|$. In particular
$\eta\ne\eta'$ and
\[
|\eta-\eta'|\le|\eta-\lambda|+|\lambda-\eta'|\le2n^{-Cn}.
\]
However, as $C$ is assumed to be large enough,
this contradicts Proposition \ref{prop:SepRoots}, which
completes the proof of the proposition.
\end{proof}

\section{Proof of Theorem \ref{thm:UBGarsia}}\label{sec:UBGarsia}

In this section, we prove the following slight extension of Theorem \ref{thm:UBGarsia}, in which we allow $t_1,\ldots, t_m$ and $\eta$ to be complex algebraic numbers.

\begin{theorem} \label{thm:UBGarsiaC}  Let $t_1,\ldots, t_m$ be algebraic numbers in $\bbC$, and let $p=(p_j)_{j=1}^m$ be a probability vector. Set $\nu=\sum_{j=1}^m p_j\delta_{t_j}$. Then for  any $h\in (0,  H(p)) $, there is a positive number $ M  $ depending on $h$,  $ t_1,\ldots, t_m$ and  $p_1,\ldots, p_m$, such that $$ h_{\eta, \nu} \geq h $$ for every  algebraic number $\eta\in \bbC$ with $0<|\eta|<1$ and  $ M(\eta)\geq M $, where $h_{\eta, \nu}$ and $M(\cdot)$ are defined as in \eqref{e-Garsia} and \eqref{e-Mahler} respectively.
\end{theorem}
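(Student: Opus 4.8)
The plan is to carry out the strategy sketched in Section~\ref{subsec:BgMT}: using the Galois conjugates of a generator $\theta$ of $\bbQ(\calD)$, reduce to an application of a $\bbC$-analogue of the Breuillard--Varj\'u lower bound for Garsia entropy in terms of Mahler measure. \emph{First, the preliminary reductions.} Put $\calD=\{t_i-t_j\}_{i,j}$. If $\eta$ is not a root of any nonzero polynomial in $\bigcup_{n\ge1}\calP^n$, then for every $n$ the map $(j_0,\dots,j_{n-1})\mapsto\sum_k t_{j_k}\eta^k$ has the same fibres as $(j_0,\dots,j_{n-1})\mapsto(t_{j_0},\dots,t_{j_{n-1}})$, so $H(\sum_{k=0}^{n-1}\xi_k\eta^k)=nH(p)$ and $h_{\eta,\nu}=H(p)$, leaving nothing to prove; thus we may assume $\eta$ is such a root. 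Since $\calD$ --- and hence $h_{\eta,\nu}$, because adding a constant to $\sum_k\xi_k\eta^k$ does not change Shannon entropy --- is invariant under translating all $t_j$ by a common constant, we may also assume $t_1=0$, so that $t_j\in\calD\subseteq\bbQ(\calD)$ for every $j$.

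\emph{Transport by the conjugates of $\theta$.} By Lemma~\ref{lem-extension} fix an algebraic integer $\theta$ with $\bbQ(\theta)=\bbQ(\calD)$; let $d=\deg\theta$, let $\theta=\theta_1,\dots,\theta_d$ be its conjugates, and let $\sigma_i:\bbQ(\theta)\to\bbQ(\theta_i)$ be the $\bbQ$-isomorphism sending $\theta$ to $\theta_i$. As $\eta$ is algebraic over $\bbQ(\theta)$, extend $\sigma_i$ to an embedding $\tau_i:\bbQ(\theta,\eta)\hookrightarrow\bbC$, and set $\eta_i=\tau_i(\eta)$ (so $\eta_1=\eta$) and $\nu_i=\sum_j p_j\delta_{\sigma_i(t_j)}$. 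Since $\tau_i$ is an injective ring homomorphism, a relation $\sum_k t_{j_k}\eta^k=\sum_k t_{j'_k}\eta^k$ holds if and only if $\sum_k\sigma_i(t_{j_k})\eta_i^k=\sum_k\sigma_i(t_{j'_k})\eta_i^k$; hence for every $n$ the variables $\sum_k\xi_k\eta^k$ and $\sum_k\xi^{(i)}_k\eta_i^k$ (with $\xi^{(i)}$ i.i.d.\ of law $\nu_i$) induce the same fibre partition of $\{1,\dots,m\}^n$, and so the same Shannon entropy. Therefore $h_{\eta,\nu}=h_{\eta_i,\nu_i}$ for all $i$, and the same reasoning shows this value is unchanged if $\eta_i$ is replaced by any of its conjugates over $\bbQ(\theta_i)$.

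\emph{The Mahler-measure comparison and the conclusion.} Next I would prove \eqref{e-CM}: $\prod_{i=1}^d\wt M_{\bbQ(\theta_i)}(\eta_i)\ge C\,M(\eta)$ for some $C=C(\calD,\theta)>0$. Grouping the embeddings of $\bbQ(\theta,\eta)$ according to their restrictions to $\bbQ(\theta)$ gives $\prod_{i=1}^d\wt M_{\bbQ(\theta_i)}(\eta_i)=(M(\eta)/|a_0|)^{f}$, where $a_0$ is the constant coefficient of the primitive $\bbZ$-minimal polynomial of $\eta$ and $f=[\bbQ(\theta,\eta):\bbQ(\eta)]\ge1$; and one checks that $|a_0|$ is bounded by a constant $C^{-1}=C(\calD,\theta)^{-1}$ --- e.g.\ because, after dividing out powers of $X$, $\eta$ is a root of some $P\in\calP^n$ with $P(0)\ne0$, so $a_0$ divides $F(0)$ for the $\bbZ$-polynomial $F=\prod_i\sigma_i(LP)$ of Lemma~\ref{lem:ToUseMahler}, and $|F(0)|$ depends only on $\calD$ and $\theta$. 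As $|\eta|<1$ forces $M(\eta)/|a_0|\ge1$, this yields \eqref{e-CM}. Now pick $k_0$ with $\wt M_{\bbQ(\theta_{k_0})}(\eta_{k_0})\ge (C\,M(\eta))^{1/d}$; assuming $M(\eta)$ is large enough that the right-hand side exceeds $1$, some conjugate of $\eta_{k_0}$ over $\bbQ(\theta_{k_0})$ has modulus $<1$, and we replace $\eta_{k_0}$ by it (altering neither $\wt M_{\bbQ(\theta_{k_0})}(\eta_{k_0})$ nor $h_{\eta_{k_0},\nu_{k_0}}$); note $\eta_{k_0}$ may fail to be real even when $\eta\in\bbR$, which is exactly why the complex version is needed. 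Finally, Proposition~\ref{prop:LB_Phi} --- the extension of \cite[Proposition~13]{BreuillardVarju2020} to homogeneous IFSs on $\bbC$ with complex algebraic translations --- gives $h_{\eta_{k_0},\nu_{k_0}}\ge\Phi_{\nu'}(\wt M_{\bbQ(\theta_{k_0})}(\eta_{k_0}))$ for a suitable atomic probability measure $\nu'$ on $\bbC$ with $H(\nu')=H(p)$, where $\Phi_{\nu'}$ is as in \eqref{eq:def-Phi_nu}. Since $\Phi_{\nu'}(a)\to H(p)$ as $a\to\infty$ and $h<H(p)$, there is $a_0$ with $\Phi_{\nu'}(a)>h$ for all $a\ge a_0$; choosing $M$ so large that $(C M)^{1/d}\ge a_0$ makes $\wt M_{\bbQ(\theta_{k_0})}(\eta_{k_0})\ge(C\,M(\eta))^{1/d}\ge a_0$ whenever $M(\eta)\ge M$, whence $h_{\eta,\nu}=h_{\eta_{k_0},\nu_{k_0}}\ge\Phi_{\nu'}(\wt M_{\bbQ(\theta_{k_0})}(\eta_{k_0}))>h$, as required.

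\emph{Main obstacle.} The genuinely hard step is Proposition~\ref{prop:LB_Phi}: carrying the already delicate Breuillard--Varj\'u machinery, tailored to rational translations on the line, over to complex algebraic translations and a possibly non-real contraction parameter on $\bbC$. The comparison \eqref{e-CM} and the transport step are comparatively routine, though the former requires careful bookkeeping with the archimedean places of the fields $\bbQ(\theta_i)$.
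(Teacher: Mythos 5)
Your proposal is correct and follows essentially the same route as the paper: reduce to the case where $\eta$ is a root of some $P\in\calP^{n}$, transport the data through the conjugates of a generator $\theta$ of $\bbQ(\calD)$ to get $h_{\eta,\nu}=h_{\eta_{k_0},\nu_{k_0}}$ with $\wt M_{\bbQ(\theta_{k_0})}(\eta_{k_0})\gtrsim M(\eta)^{1/d}$, and then invoke Proposition~\ref{prop:LB_Phi} together with $\Phi_{\nu'}(a)\to H(p)$. The only (harmless) variation is that you derive \eqref{e-CM} as an exact identity $\bigl(M(\eta)/|a_0|\bigr)^{f}$ by counting embeddings, where the paper settles for the inequality $\wt M(G)\ge\wt M(f)$ via divisibility of $f$ by the integer polynomial $G=\prod_k g_k$; both rest on the same bound $|a_0|\le C(\calD,\theta)$.
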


The proof of the above theorem is based on a lower bound estimate of the involved Garsia entropy.   For a  probability measure $\nu$ on $\bbC$ with bounded support,  following \cite{BreuillardVarju2020} we define
\begin{equation}\label{eq:def-Phi_nu}
	\Phi_{\nu}(a)=\sup_{t>0} \left\{  H(t a \xi + G)- H(t \xi + G) \right \} \mFor a \in \bbR,
\end{equation}
where $\xi$ is a complex random variable with law $\nu$, $ G $ is a complex Gaussian random variable independent of $ \xi $, with density $ \exp(-\abs{z}^{2}/2)/(2\pi)$ for $ z \in \bbC $, and $ H(\cdot) $ stands for  differential entropy (see \eqref{eq:def-DiffEntropy}).

The following proposition gives a lower bound of  $ h_{\eta, \nu}$,
 which extends a  result  of Breuillard and Varj\'u \cite[Proposition 13]{BreuillardVarju2020},
\begin{proposition}\label{prop:LB_Phi}
	Let $ \bbK $ be a subfield of $ \bbC $. Let $ \eta \in \bbC $ such that $ 0 < \abs{\eta} < 1 $ and $ [\bbK(\eta) : \bbK] < \infty $. Let $ \{u_{j}\}_{j=1}^m\subset \bbK$ and $ p = (p_{j})_{j=1}^m $ be a probability vector. Set $ \nu =\sum_{j=1}^mp_{j}\delta_{u_{j}}$. Then $ h_{\eta, \nu} \geq \Phi_{\nu}\left(\widetilde{M}_{\bbK}(\eta
	)\right) $, where
\begin{equation}\label{eq:def-wtM}
	\wt{M}_{\bbK}(\eta) = \prod_{ \eta_j} \frac{1}{ \min \{ 1 , \abs{ \eta_j } \} },
\end{equation}
with the product running over all the conjugates (including $\eta$ itself) of $\eta$ over $\bbK$.
\end{proposition}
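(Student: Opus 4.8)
The plan is to adapt the proof of \cite[Proposition 13]{BreuillardVarju2020}, which establishes the analogous lower bound when $\bbK=\bbQ$ and the $u_j$ are rational; the two new features here, an arbitrary base field $\bbK$ and possibly non-real data, are handled respectively by passing to the conjugate embedding of $\bbK(\eta)$ and by using the complex-Gaussian version of $\Phi_\nu$ already built into \eqref{eq:def-Phi_nu}. Recall $h_{\eta,\nu}=\lim_{n\to\infty}\frac1n H(Y_n)$ with $Y_n=\sum_{k=0}^{n-1}\xi_k\eta^k$ and $(\xi_k)$ i.i.d.\ of law $\nu$. I will first reduce to the case in which $\bbK$ is a number field and $\eta$ is algebraic over $\bbQ$ (the only case used elsewhere in the paper, and the one to which the general case is routinely reduced by replacing $\bbK$ by the field generated by $u_1,\dots,u_m$ over $\bbQ$). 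Let $\sigma_1=\mathrm{id},\sigma_2,\dots,\sigma_d$ be the distinct embeddings of $\bbK(\eta)$ into $\bbC$ fixing $\bbK$, and put $\eta_i=\sigma_i(\eta)$, so $\wt{M}_\bbK(\eta)=\prod_{|\eta_i|<1}|\eta_i|^{-1}$. Since $u_j\in\bbK$, we have $Y_n\in\bbK(\eta)$ and $\sigma_i(Y_n)=\sum_{k=0}^{n-1}\xi_k\eta_i^k$; writing $\vec Y_n=(\sigma_1(Y_n),\dots,\sigma_d(Y_n))\in\bbC^d$, its first coordinate is $Y_n$ itself, so $Y_n$ and $\vec Y_n$ determine one another and $H(Y_n)=H(\vec Y_n)$. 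Moreover $\vec Y_n=\sum_{k=0}^{n-1}\xi_k D^k\vec 1$ with $D=\diag(\eta_1,\dots,\eta_d)$, $\vec 1=(1,\dots,1)$; equivalently $\vec Y_{n+1}$ has the law of $\xi\vec 1+D\vec Y_n$ with $\xi\sim\nu$ independent of $\vec Y_n$. Thus the problem becomes a statement about the entropy of a diagonal linear "self-affine" process on $\bbC^d$.

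Next I would pass from the discrete entropy $H(\vec Y_n)$ to a Gaussian-smoothed differential entropy at a cost that is $O(1)$ in $n$. Following Breuillard--Varj\'u, one smooths with an independent standard complex Gaussian in each coordinate at scales $\vec t=(t_1,\dots,t_d)$ with $t_i\asymp\max\{1,|\eta_i|\}^{\,n}$, so that, up to a constant independent of $n$, $H(\vec Y_n)$ equals $H\big(\vec Y_n+\sum_i t_iG_i\mathbf e_i\big)-2\sum_i\log t_i$. Over $\bbQ$ this identity comes from the fact that $q^{n-1}Y_n$ lies in the fixed rank-$d$ lattice $\bbZ[q\eta]$ ($q$ the leading coefficient of the minimal polynomial of $\eta$), whose Minkowski embedding has a fixed covolume, so the discretization cost is absorbed into the $O(1)$ after rescaling.

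Then I would run the recursion $\vec Y_{n+1}\overset{d}{=}\xi\vec 1+D\vec Y_n$ together with the affine rule \eqref{eq:Affine-DiffEntropy}: inserting the fresh digit $\xi\vec 1$ at unit scale and applying $D$ multiplies the coordinate scales by the $|\eta_i|$, whose product over the \emph{contracting} coordinates is exactly $1/\wt{M}_\bbK(\eta)$. Matching the smoothing scales across one step and accounting for the $\log|\det|$ contributions yields a per-step estimate of the shape $H\big(\vec Y_{n+1}+\text{noise}\big)\ge H\big(\vec Y_n+\text{noise}\big)+\Phi_\nu\big(\wt{M}_\bbK(\eta)\big)-o(1)$, where the supremum over the smoothing parameter in the definition \eqref{eq:def-Phi_nu} of $\Phi_\nu$ is precisely what controls how much of the $H(p)$ bits carried by $\xi$ survives the rescaling. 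Telescoping, dividing by $n$, and letting $n\to\infty$ gives $h_{\eta,\nu}=\lim_n\frac1n H(\vec Y_n)\ge\Phi_\nu\big(\wt{M}_\bbK(\eta)\big)$.

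The main obstacle is the middle step: over a general number field there is no single canonical lattice containing all the $\vec Y_n$. The fix is to choose (via Lemma~\ref{lem-extension}) an algebraic integer $\theta$ with $\bbQ(\theta)=\bbQ(u_1,\dots,u_m,\eta)$, an integer $L$ with $Lu_j\in\bbZ[\theta]$ for all $j$, and then to track $(Lq)^{n-1}\vec Y_n$ inside the fixed lattice obtained by embedding $\bbZ[\theta]$ (or a suitable fractional ideal) into $\bbR^{\deg\theta}$ via its archimedean places, while controlling both the resulting covolume and the bounded linear distortion between the $\bbK$-conjugate coordinates $(\sigma_i)$ and these places. Carrying this bookkeeping through so that every $n$-dependent term cancels \emph{exactly} (not merely up to $o(n)$) and the surviving quantity is precisely $\Phi_\nu(\wt{M}_\bbK(\eta))$, now with complex rather than real Gaussians, is the technical heart; the remaining arguments are a transcription of \cite[Proposition 13]{BreuillardVarju2020} to this setting.
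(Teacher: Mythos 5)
Your setup (passing to the vector of $\bbK$-conjugates, noting that $Y_n$ and $\vec Y_n$ determine each other so $H(Y_n)=H(\vec Y_n)$, and running a one-step recursion) is in the right spirit, but the route you propose diverges from the paper's at the decisive point, and the step you wave through is exactly where the difficulty lives. You work with the diagonal matrix $D=\diag(\eta_1,\dots,\eta_d)$ of \emph{all} conjugates and smooth coordinate-wise at $n$-dependent scales $t_i\asymp\max\{1,|\eta_i|\}^n$. With that setup the per-step entropy gain is a genuinely multi-scale quantity of the form $H(\xi\vec 1;B_1\mid B_2)$ with $B_2B_1^{-1}$ having several distinct singular values $|\eta_i|^{-1}$, and it simply is not of the form $H(ta\xi+G)-H(t\xi+G)$ for a single scalar $t$; the definition \eqref{eq:def-Phi_nu} of $\Phi_\nu$ involves a one-parameter supremum, so the asserted per-step bound $\ge\Phi_\nu\bigl(\wt{M}_{\bbK}(\eta)\bigr)-o(1)$ does not follow from anything you have written. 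This is precisely the point where the paper (following Breuillard--Varj\'u) invokes Horn's theorem: it discards the expanding conjugates, builds a matrix $A=UDV$ whose \emph{eigenvalues} are the contracting conjugates but whose \emph{singular values} are $(1,\dots,1,1/\wt{M}_{\bbK}(\eta))$, so that the entire contraction is concentrated in a single direction and the scalar $\Phi_\nu$ applies verbatim. Without that device (or an equivalent one) your telescoping cannot produce $\Phi_\nu$ of the product $\prod_{|\eta_i|<1}|\eta_i|^{-1}$; moreover, since your $D$ has $\|D\|>1$ whenever $\eta$ has an expanding conjugate, the monotonicity inequality \eqref{eq:ConvIncreaseEntropy}, which drives the telescoping, is not even applicable.

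A second, smaller gap: the reduction from discrete to Gaussian-smoothed differential entropy "at cost $O(1)$" requires a Garsia-type separation estimate for the atoms of $\vec Y_n$ via a fixed lattice and the product formula, which forces $\eta$ to be algebraic over $\bbQ$ and $\bbK$ to be a number field; the proposition allows arbitrary subfields $\bbK\subset\bbC$ and only $[\bbK(\eta):\bbK]<\infty$, and your proposed reduction (replacing $\bbK$ by $\bbQ(u_1,\dots,u_m)$) can destroy the finiteness hypothesis. The paper avoids all of this: it never passes through a lattice, using instead the inequality $H(X)\ge H(X;B)$ valid for \emph{any} discrete $X$ (see \eqref{eq:LB-BySubadd}), together with the observation that every relation $P(\eta)=0$ with $P\in\calP^n$ forces $P(A)=0$, so that $X_A^{(n)}$ is a function of $X_\eta^{(n)}$ and $H(X_\eta^{(n)})\ge H(X_A^{(n)})$. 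I would encourage you to restructure your argument around the Horn's-theorem construction and fixed-scale smoothing; as it stands, the "technical heart" you defer is not mere bookkeeping but a missing idea.
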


Breuillard and Varj\'u \cite[Proposition 13]{BreuillardVarju2020} only proved the above result in the case when $ \bbK=\bbQ$. Before proving Proposition \ref{prop:LB_Phi}, we first apply it to the proof of Theorem \ref{thm:UBGarsiaC}.

\begin{proof}[Proof of Theorem \ref{thm:UBGarsiaC} assuming Proposition \ref{prop:LB_Phi}]
Let $t_1,\ldots, t_m, p_1,\ldots, p_m, \nu$ and $h$ be given as in the statement of the theorem. Set
$$\mathcal D=\{t_i-t_j:\; 1\leq i,j\leq m\}.$$ For $n\in \bbN$ let $\mathcal P^n$ denote the collection of polynomials in variable $X$ of degree not exceeding $n$ and with coefficients in $\mathcal D$.

By Lemma \ref{lem-extension}, we can choose an algebraic integer $\theta$ such that $\bbQ(\theta)=\bbQ({\mathcal D})$. Then we can take a suitable $L\in \bbN$ such that
\begin{equation*}
\label{e-integer_new}
Ls \in \bbZ[\theta] \quad \mbox{ for all }s\in \mathcal D.
\end{equation*}
Let $d=\deg(\theta)$, and  let $\theta_1,\ldots, \theta_d$ be the algebraic conjugates of $\theta$, with $\theta_1=\theta$. For $k=1,\ldots, d$, let $$\sigma_k: \bbQ(\theta)\to \bbQ(\theta_k)$$ be the field isomorphism such that $\sigma_k(a)=a$ for all $a\in \bbQ$ and $\sigma_k(\theta)=\theta_k$.

Let $\eta\in \bbC$ be an algebraic number with $0<|\eta|<1$. We aim to show that $h_{\eta,\nu}\geq h$ if the Mahler measure  of $\eta$ is large enough. For this purpose, we may assume that  that $P(\eta)=0$ for some  $P=\sum_{i=0}^na_i X^i\in \mathcal P^n$ with $n\in \bbN$; otherwise  $ h_{\eta,\nu} = H(p)>h $ and there is nothing left to prove.

 Fix such a polynomial $P=\sum_{i=0}^na_i X^i$ and define $F\in \bbC[X]$ by
\begin{equation}
\label{e-6.4}
F=L^d \prod_{k=1}^d \left(\sum_{i=0}^n \sigma_k(a_i) X^i\right)= \prod_{k=1}^d \left(\sum_{i=0}^n \sigma_k(La_i) X^i\right).
\end{equation}
Then $F\in \bbZ[X]$, which follows from
 the same argument as in the last paragraph of the proof of Lemma \ref{lem:ToUseMahler}.  Clearly $F(\eta)=0$.
 Writing
 $F=\sum_{i=0}^{dn} \beta_iX^i$ and letting $\tau$ be the smallest integer so that $\beta_\tau\neq 0$, by \eqref{e-6.4} we see that $\beta_\tau\leq C$, where
 $$ C:=L^d \max\{|\sigma_k(s)|^d:\; 1\leq k\leq d, \;s\in \mathcal D\}.$$

We claim that  \begin{equation}
\label{e-MM}
 \wt{M}_{\bbQ}(\eta)\geq C^{-1} M(\eta),
 \end{equation}
  where $\wt{M}_{\bbQ}(\eta) $ is defined as in \eqref{eq:def-wtM}. To see this, let $ f = \sum_{i=0}^{q} b_{i}X^{i} $ be the minimal polynomial of $ \eta $ in $ \bbZ[X] $. Then $ f $ divides $ F $ in $ \bbZ[X] $ since $ F(\eta) = 0 $.
	It follows that $b_0$ divides $\beta_\tau$. Consequently, $|b_0|\leq |\beta_\tau|\leq C$.
	By Vieta’s formulas,
	\begin{equation}\label{eq:M_MTilde}
		\frac{ M(\eta) }{ \wt{M}_{\bbQ}(\eta)  } = \abs{b_{q}}\prod_{i=1}^q \abs{\alpha_i} = \abs{b_{0}} \leq   C,
	\end{equation}
where $\alpha_1,\ldots, \alpha_q$ are the roots of $f$. This yields \eqref{e-MM}.

Now	let $ g =\sum_{i=0}^r c_iX^i$  be a minimal polynomial of $ \eta $ in $ \bbQ(\theta)[X] $ with coefficients in $ \bbZ[\theta] $.  For $ 1 \leq k \leq d $, define
\begin{equation}
\label{e-gk}
 g_{k} = \sigma_k(g):=\sum_{i=0}^r \sigma_k(c_i)X^i.
 \end{equation}
Since $ g $ is irreducible in $ \bbQ(\theta)[X] $, $ g_{k} $ is irreducible in $ \bbQ(\theta_{k})[X] $ for each $1\leq k\leq d$.
 Define
$$ G = \prod_{k=1}^{d} g_{k}.$$
 Then $ G \in \bbZ[X] $, which follows from
 a similar  argument as in the last paragraph of the proof of Lemma \ref{lem:ToUseMahler}. Since $G(\eta)=0$ and   $ f $ is the minimal polynomial of $ \eta $ in $ \bbZ[X] $, it follows that $f$ divides $G$.

 For a nonzero polynomial $ Q=\sum_{i=0}^l e_i X^i=e_l\prod_{i=1}^l(X-\gamma_l)$ in $\bbC[X]$, define
	 \begin{equation*}
		\wt{M}(Q) = \prod_{1\leq i\leq l:\; \gamma_i\neq 0 } \frac{1}{\min\{1, \abs{\gamma_i}\} },
	\end{equation*}
	with convention $ \wt{M}(Q) = 1 $ if $Q$ does not have nonzero root.
	It is easily checked that  $ \wt{M}(Q_{1}Q_{2}) = \wt{M}(Q_{1}) \wt{M}(Q_{2})$ for any $Q_1, Q_2\in \bbC[X]$,  and  $ \wt{M}( Q_{2})\geq \wt{M}(Q_{1}) $ if $ Q_{1} $ divides $ Q_{2}$.

For $1\leq k\leq d $, let $\eta_{k} $ be a root of $ g_{k} $. Using the above notation, we have
	\begin{equation*}\label{eq:M_F}
		\prod_{k=1}^{d} \wt{M}_{\bbQ(\theta_{k})}(\eta_{j}) = \prod_{k=1}^{d} \wt{M}(g_{k}) = \wt{M}(G) \geq \wt{M}(f) = \wt{M}_{\bbQ}(\eta),
	\end{equation*}
	where the first equality holds since $ g_{j} $ is irreducible in $ \bbQ(\theta_{j})[X] $, the second equality holds by $ G = \prod_{k=1}^{d} g_{k}  $, the inequality is by $ f $ dividing $ G $, and the last equality holds since $ f \in \bbZ[X] $ is the minimal polynomial of $\eta$  over $\bbQ$.  Thus there exists  some $ 1 \leq k_{0} \leq d $ such that
	\begin{equation*}\label{eq:j_{0}}
		\wt{M}_{\bbQ(\theta_{k_{0}})}(\eta_{k_{0}}) \geq (\wt{M}_{\bbQ}(\eta))^{1/d}.
	\end{equation*}
 From this and  \eqref{eq:M_MTilde}, we conclude that
	\begin{equation}\label{eq:wtM-eta-LB}
		\wt{M}_{\bbQ(\theta_{k_{0}})}(\eta_{k_{0}}) \geq \frac{M(\eta)^{1/d}}{C} > 1
	\end{equation}
	when $ M(\eta) $ is sufficiently large depending on $ \calD $.  Due to this and  \eqref{eq:def-wtM}, replacing $\eta_{k_{0}}$ by another root of $g_{k_0}$ if necessary, we may assume that $ \abs{\eta_{k_{0}}} < 1 $.

	Define $ \wt{t}_{j} = \sigma_{k_{0}}(t_{j}) $ for $1\leq j \leq m$. Let $ \wt{\nu} = \sum_{j=1}^m p_{j} \delta_{\wt{t}_{j}}$ and let $ (\wt{\xi}_{j})_{j=0}^{\infty} $ be  a sequence of i.i.d.\ random variables with common law $ \wt{\nu} $. Next we show that
	\begin{equation}
	\label{e-heta}
		h_{\eta,\nu} = h_{\eta_{k_{0}}, \wt{\nu}}.
	\end{equation}
To see this, it is sufficient to prove that for each polynomial $ Q
 $ with coefficients in $\mathcal D$,
 \begin{equation}
 \label{e-equiv}
 Q(\eta)=0  \Longleftrightarrow \sigma_{k_0}(Q)(\eta_{k_0})=0,
 \end{equation}
 where $\sigma_{k_0}(Q)$ is defined in a way similar to the definition of $\sigma_k(g)$ (see \eqref{e-gk}). Below we only prove the direction `$\Longrightarrow$' in \eqref{e-equiv}, the other direction follows from a similar argument.

 	Suppose that $ Q(\eta) = 0 $ for some polynomial $ Q
 $ with coefficients in $\mathcal D$.  Since  $\mathcal D\subset  \bbQ(\theta) $ and $ g $ is a minimal polynomial of $ \eta $ in $ \bbQ(\theta) $, $ g $ divides $ Q $ in $ \bbQ(\theta)[X] $. It follows that  $ g_{k_{0}}$ divides $ \sigma_{k_{0}}(Q) $ in $ \bbQ(\theta_{k_{0}})[X] $. Hence $ \sigma_{k_{0}}(Q)( \eta_{k_{0}} ) = 0 $ by $ g_{k_{0}}( \eta_{k_{0}} ) = 0 $.
 This proves the direction `$\Longrightarrow$' in \eqref{e-equiv}.

  By \eqref{e-heta} and  Proposition \ref{prop:LB_Phi} applied to $ h_{\eta_{k_{0}},\wt{\nu}} $, we obtain that
	\begin{equation}\label{eq:LB-hLambda}
		h_{\eta,\nu} = h_{\eta_{k_{0}}, \wt{\nu}} \geq \Phi_{\wt{\nu}}\left (\wt{M}_{\bbQ(\theta_{k_{0}})}(\eta_{k_{0}})\right ),
	\end{equation}
	where $\Phi_{\wt{\nu}}(\cdot)$ is as defined in \eqref{eq:def-Phi_nu} (in which we replace $\nu$ by $\wt{\nu}$).
	
 Let $ G $ be a complex Gaussian random variable with density $ \exp(-\abs{z}^{2}/2)/(2\pi)$ for $ z \in \bbC $. It is easily checked that
	\begin{equation*}
		\lim_{a\to \infty} H(a^{-1/2} \zeta + G) = H(G) \mAnd \lim_{a\to \infty} H(a^{1/2} \zeta + G)= H(G)+H(p),
	\end{equation*}
	where $ \zeta $ is a random variable with law $ \wt{\nu} $ and independent of $ G $.  Taking $ t = a^{-1/2} $ in \eqref{eq:def-Phi_nu} (in which we replace $\nu$ by $\wt{\nu}$)  yields that
	\begin{equation*}\label{eq:LimPhiA}
		\lim_{a\to\infty}  \Phi_{\wt{\nu}}(a) \geq \lim_{a\to \infty} H(a^{1/2} \zeta + G)- H(a^{-1/2} \zeta + G) = H(p).
	\end{equation*} 
	Since $ h < H(p) $, there exists $ A  > 0 $ such that $  \Phi_{\wt{\nu}}(a) \geq h $ for all $ a \geq A $. By \eqref{eq:wtM-eta-LB},  $ \wt{M}_{\bbQ(\theta_{k_{0}})}(\eta_{k_{0}}) \geq A $  if $ M(\eta) >(A C)^d$. Then \eqref{eq:LB-hLambda} implies  that
	$$h_{\eta,\nu} \geq h$$
 if $ M(\eta) >(A C)^d$, 	which finishes the proof of the theorem.
\end{proof}

In the remaining part of this section, we prove Proposition \ref{prop:LB_Phi}. This is done by  adapting  the ideas of \cite[Proposition 13]{BreuillardVarju2020}.

 Let   $ \MatC $, $ \MatR[d] $,  $\GLR[d]$, $ \Ugrp{d} $ and $ \Ogrp{d} $  denote  the sets of $d\times d$  complex, real, real invertible, unitary and orthogonal matrices, respectively. Since  $\bbK$ in Proposition \ref{prop:LB_Phi} is assumed to be a subfield of $\bbC$, to apply the ideas of \cite[Proposition 13]{BreuillardVarju2020}, we need to transfer certain series of complex  matrices into the series of real matrices.  For this purpose,  let us introduce a natural operator  transferring complex matrices into real matrices.

Define $ \wh{\phantom{\cdot}\cdot\phantom{\cdot}} \colon \MatC[d] \to \MatR[2d] $ by
\begin{equation}\label{eq:matHat}
	\wh{A} = \Big(\Big[ \begin{smallmatrix}	a_{jk} & -b_{jk}\\ b_{jk} & a_{jk} 	\end{smallmatrix}\Big]\Big)_{1\leq j,k\leq d}
\end{equation}
if $ A = (a_{jk}+i b_{jk})_{1\leq j,k\leq d}$, where  $a_{jk}, b_{jk} \in \bbR$.
For $z\in \bbC$,  write $\wh{z}:=\wh{zI_d}$, where $I_d$ stands for the $d\times d$ identity matrix.

\begin{lemma}\label{lem:complex2real}
	The operator $ \wh{\phantom{\cdot}\cdot\phantom{\cdot}} $  has the following properties.
	\begin{enumerate}[(i)]
		\item\label{itm:C2R-AlgHomo} $ \wh{\phantom{\cdot}\cdot\phantom{\cdot}} $ is an injective $ \bbR$-algebra homomorphism, that is, for $ \alpha \in \bbR, A, B \in \MatC $,
		\begin{equation*}\label{eq:R-Algebra}
			\wh{\alpha A + B} = \alpha \wh{A} + \wh{B} \mAnd \wh{AB} = \wh{A}\wh{B}.
		\end{equation*}
		
		\item\label{itm:C2R-isometry} $ \wh{\phantom{\cdot}\cdot\phantom{\cdot}} $ is an isometry with respect to the norms induced by the standard inner products respectively, that is, $\norm{\wh{A}} = \norm{A} $ for $ A \in \MatC[d] $. If $ U \in \Ugrp{d} $, then $ \wh{U} \in \Ogrp{2d} $.
		
		\item\label{itm:C2R-commute} For any $ z \in \bbC $ and $ B \in \MatC[d] $,
		\begin{equation}\label{eq:wh-comm}
			\wh{z} \wh{B} = \wh{zB}  = \wh{B} \wh{z}.
		\end{equation}
	\end{enumerate}
\end{lemma}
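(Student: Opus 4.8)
The plan is to route the construction through the standard real-linear identification of $\bbC^{d}$ with $\euclid[2d]$; once that is set up, all three parts follow essentially formally. Let $\iota\colon\bbC^{d}\to\euclid[2d]$ be the $\bbR$-linear isomorphism sending $v=(v_{1},\dots,v_{d})$ with $v_{k}=x_{k}+iy_{k}$ to $(x_{1},y_{1},\dots,x_{d},y_{d})^{\mathsf T}$. The first step is to record the single identity on which everything rests:
\begin{equation*}
	\wh{A}\,\iota(v)=\iota(Av)\mFor A\in\MatC[d],\ v\in\bbC^{d}.
\end{equation*}
This follows immediately from \eqref{eq:matHat}: writing $A=(a_{jk}+ib_{jk})$, the $(2j-1)$-st and $(2j)$-th coordinates of $\wh{A}\,\iota(v)$ are $\sum_{k}(a_{jk}x_{k}-b_{jk}y_{k})=\mathrm{Re}((Av)_{j})$ and $\sum_{k}(b_{jk}x_{k}+a_{jk}y_{k})=\mathrm{Im}((Av)_{j})$, respectively.

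Granting this identity, part (i) goes as follows. The relation $\wh{\alpha A+B}=\alpha\wh{A}+\wh{B}$ for $\alpha\in\bbR$ is immediate from the defining formula \eqref{eq:matHat}, and injectivity is clear since $\wh{A}=0$ forces every $a_{jk}$ and $b_{jk}$ to vanish. For multiplicativity I would argue that, for all $v\in\bbC^{d}$,
\begin{equation*}
	\wh{AB}\,\iota(v)=\iota(ABv)=\wh{A}\,\iota(Bv)=\wh{A}\wh{B}\,\iota(v),
\end{equation*}
and then invoke surjectivity of $\iota$ to conclude $\wh{AB}=\wh{A}\wh{B}$. Part (iii) is then a one-line corollary: since $\wh{z}=\wh{zI_{d}}$ with $zI_{d}\in\MatC[d]$, multiplicativity gives $\wh{z}\wh{B}=\wh{(zI_{d})B}=\wh{zB}$ and likewise $\wh{B}\wh{z}=\wh{B(zI_{d})}=\wh{zB}$, using $zB=Bz$.

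For part (ii), the key observation is that $\iota$ is an isometry from $\bbC^{d}$ with its Hermitian norm $\norm{v}^{2}=\sum_{k}\abs{v_{k}}^{2}$ onto $\euclid[2d]$ with the Euclidean norm, since $\norm{\iota(v)}^{2}=\sum_{k}(x_{k}^{2}+y_{k}^{2})=\norm{v}^{2}$. Combining this with the displayed identity, $\norm{\wh{A}\,\iota(v)}=\norm{\iota(Av)}=\norm{Av}$ for every $v$, whence $\wh{A}$ and $A$ have equal operator norm --- which is the norm induced by the standard inner product in each case. If $U\in\Ugrp{d}$, then the same computation shows that $\wh{U}$ preserves the Euclidean norm of $\euclid[2d]$ and is therefore orthogonal; alternatively one checks directly from \eqref{eq:matHat} that $\wh{A^{*}}=\wh{A}^{\mathsf T}$ and applies part (i) to get $\wh{U}^{\mathsf T}\wh{U}=\wh{U^{*}U}=\wh{I_{d}}=I_{2d}$.

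I do not expect a genuine obstacle here: the whole lemma reduces to the elementary identity $\wh{A}\iota=\iota A$ together with the remark that $\iota$ is an isometric $\bbR$-linear isomorphism. The two points worth a moment of care are matching the block-index convention of \eqref{eq:matHat} with the interleaving order chosen for $\iota$ (so that $\wh{A}\,\iota(v)=\iota(Av)$ holds on the nose, not merely up to a coordinate permutation), and noting that the relevant norm in (ii) must be the operator norm --- for the Frobenius norm one would instead get $\norm{\wh{A}}=\sqrt{2}\,\norm{A}$, hence no isometry.
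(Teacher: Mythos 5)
Your proposal is correct and follows essentially the same route as the paper: the authors also introduce the interleaving map $\tau(x_{1}+iy_{1},\ldots,x_{d}+iy_{d})=(x_{1},y_{1},\ldots,x_{d},y_{d})$ and reduce everything to the identity $\wh{A}x=\tau A\tau^{-1}x$, leaving the remaining verifications to the reader. Your write-up simply supplies those details (and correctly flags that the norm in (ii) must be the operator norm), so there is nothing to add.
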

\begin{proof} Consider the natural map $ \tau \colon \bbC^{d} \to \bbR^{2d} $ defined by
	\begin{equation*}
		\tau \left ( x_{1} + i y_{1}, \ldots, x_{d}+i y_{d} \right) = (x_{1}, y_{1}, \ldots, x_{d}, y_{d}),
	\end{equation*}
where  $x_{j}, y_{j} \in \bbR$ for $1\leq j \leq d$. 	
	Then the statements of the lemma are readily justified from the fact that  $ \wh{A}x = \tau A \tau^{-1} x $ for $ A \in \MatC $ and $x \in \bbR^{2d}$. We leave the details to the reader.
\end{proof}

To prove Proposition \ref{prop:LB_Phi}, we  need to use some entropy quantities  introduced in \cite{BreuillardVarju2020}.
Let $ X, Y $ be two independent bounded random variables in $ \euclid[d] $. Following Breuillard and Varj\'{u} \cite{BreuillardVarju2020}, we define
\begin{equation}\label{eq:def-+gaussian}
	H(X; B) = H(X + G_{B}) - H(G_{B}) \mFor B \in \GLR[d],
\end{equation}
where $ G_{B} $ is a Gaussian random variable in $\bbR^d$, independent of $X$, with mean  $ 0 $ and covariance matrix $ BB^{t} $. Here $ B^{t} $ stands for the transpose of $B$. For $ B_{1},B_{2} \in \GLR[d] $, write
\begin{equation}\label{eq:def-DeltaGaussEntropy}
	H(X; B_{1} | B_{2}) = H(X; B_{1}) - H(X; B_{2}).
\end{equation}
It follows from \eqref{eq:Affine-DiffEntropy} that for each $B\in \GLR[d]$,
\begin{equation}\label{eq:DiffEntropy-Scaling}
	H(X; B_{1}) = H(BX; BB_{1})  \mAnd H(X; B_{1} | B_{2}) =  H(BX; BB_{1} | BB_{2}).
\end{equation}

The quantities defined above have the following properties, which are crucial for our proof.

\begin{lemma}[{\cite{BreuillardVarju2020}}]
Let $B_1, B_2\in \GLR[d]$ be such that $\|B_1x\|\leq \|B_2x\|$ for all $x\in \bbR^d$. Assume that $X,Y$ are two bounded independent random variables taking values in $\bbR^d$. Then
\begin{equation}\label{eq:subadd-entropy}
	H(X + Y; B_{1} ) \leq H(X; B_{1}) + H(Y; B_{1}),
\end{equation}
\begin{equation}\label{eq:RotSym-Gauss}
	H(X; B_1O) = H(X; B_1) \mFor O \in \Ogrp{d},
\end{equation}
and
\begin{equation*}
	H(X+Y ; B_{1} | B_{2}) \geq H(X; B_{1}| B_{2}).
\end{equation*}
As a consequence, for any $ B \in \GLR[d] $ with $\norm{B} \leq 1$ and any nonnegative integer $k$,
\begin{equation}\label{eq:ConvIncreaseEntropy}
	H\left(X + Y; B^{k+1} | B^{k} \right) \geq H\left(X; B^{k+1} | B^{k} \right).
\end{equation}
If in addition $X$ is discrete, then
\begin{equation}\label{eq:LB-BySubadd}
	H(X) \geq H(X; B_1).
\end{equation}
\end{lemma}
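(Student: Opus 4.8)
The plan is to deduce all the assertions from the differential-entropy calculus for sums of independent variables together with the data-processing inequality for mutual information; the one observation that turns everything into routine bookkeeping is that the Gaussian-smoothed quantity in \eqref{eq:def-+gaussian} is itself a mutual information. Throughout, I would realize $G_B$ as a centered Gaussian with covariance $BB^{t}$, independent of the variables at hand, and note that $H(X;B)$ depends on $B$ only through $BB^{t}$; since $(B_1O)(B_1O)^{t}=B_1B_1^{t}$ for $O\in\Ogrp{d}$, this already gives \eqref{eq:RotSym-Gauss}. By translation invariance of differential entropy, $H(U+G_B\mid U)=H(G_B)$ whenever $U$ is independent of $G_B$, so
\begin{equation*}
	H(U;B)=H(U+G_B)-H(G_B)=I(U;\,U+G_B),
\end{equation*}
where $I(\cdot\,;\cdot)$ denotes mutual information; all the quantities below are finite because the $X,Y$ in the statement are bounded. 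In particular, for discrete $X$ one has $I(X;X+G_{B_1})\le H(X)$, which is \eqref{eq:LB-BySubadd}. The only properties of $I$ I would invoke are the chain rule $I(U,V;W)=I(U;W)+I(V;W\mid U)$ and the data-processing inequality, in the forms $I(g(U);W)\le I(U;W)$ and $I(U;W)\le I(U;V)$ whenever $U\to V\to W$ is a Markov chain.

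\emph{Subadditivity.} To prove \eqref{eq:subadd-entropy} (which does not use the relation between $B_1$ and $B_2$), let $X,Y,Z$ be independent with $Z\overset{d}{=}G_{B_1}$. Then
\begin{align*}
	H(X+Y;B_1)&=I(X+Y;\,X+Y+Z)\ \le\ I(X,Y;\,X+Y+Z)\\
	&=I(X;\,X+Y+Z)+I(Y;\,X+Y+Z\mid X).
\end{align*}
In the conditional term, conditioning on $X$ and using that $(Y,Z)$ is independent of $X$ gives $I(Y;Y+Z)=H(Y;B_1)$. For the first term, $X\to X+Z\to X+Y+Z$ is a Markov chain (the last variable is $X+Z$ plus the independent $Y$), so data processing yields $I(X;X+Y+Z)\le I(X;X+Z)=H(X;B_1)$. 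Adding the two bounds proves \eqref{eq:subadd-entropy}.

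\emph{Conditional monotonicity and its consequence.} Expanding \eqref{eq:def-DeltaGaussEntropy} and cancelling the two copies of $H(G_{B_i})$, the inequality $H(X+Y;B_1\mid B_2)\ge H(X;B_1\mid B_2)$ is equivalent to
\begin{equation*}
	H(X+Y+G_{B_1})-H(X+G_{B_1})\ \ge\ H(X+Y+G_{B_2})-H(X+G_{B_2}).
\end{equation*}
This is where the hypothesis $\norm{B_1x}\le\norm{B_2x}$ for all $x\in\euclid[d]$ is used: it guarantees that $B_2B_2^{t}-B_1B_1^{t}$ is positive semidefinite, hence $G_{B_2}\overset{d}{=}G_{B_1}+G'$ for an independent centered Gaussian $G'$. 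Setting $W:=X+G_{B_1}$ (independent of $Y$), the left-hand side above equals $H(W+Y)-H(W)=I(Y;\,Y+W)$ and the right-hand side equals $H(W+G'+Y)-H(W+G')=I(Y;\,Y+W+G')$; since $Y\to Y+W\to Y+W+G'$ is a Markov chain, data processing gives $I(Y;Y+W+G')\le I(Y;Y+W)$, as required. Finally, taking $B_1=B^{k+1}$ and $B_2=B^{k}$ with $\norm{B}\le1$ — so that $\norm{B^{k+1}x}=\norm{B(B^{k}x)}\le\norm{B^{k}x}$ — yields \eqref{eq:ConvIncreaseEntropy}. The only step that is not pure entropy bookkeeping is this geometric input, the passage from the norm domination to the Gaussian decomposition $G_{B_2}\overset{d}{=}G_{B_1}+G'$; everything else is a direct instance of the chain rule or the data-processing inequality, so the full write-up should be short.
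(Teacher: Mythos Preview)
Your mutual-information framework is clean and handles \eqref{eq:RotSym-Gauss}, \eqref{eq:LB-BySubadd}, and \eqref{eq:subadd-entropy} correctly; the paper itself merely cites \cite[Lemma~9]{BreuillardVarju2020} for these, so your write-up is in fact more self-contained. The one genuine gap is in the conditional-monotonicity step. You assert that $\|B_1x\|\le\|B_2x\|$ for all $x$ forces $B_2B_2^{t}-B_1B_1^{t}$ to be positive semidefinite, but the hypothesis only says that $B_2^{t}B_2-B_1^{t}B_1$ is positive semidefinite, and in general this does not control $B_iB_i^{t}$. Take $B_1=\diag(1,2)$ and $B_2=\bigl(\begin{smallmatrix}0&2\\1&0\end{smallmatrix}\bigr)$: then $B_1^{t}B_1=B_2^{t}B_2=\diag(1,4)$, so $\|B_1x\|=\|B_2x\|$ for every $x$, yet $B_2B_2^{t}-B_1B_1^{t}=\diag(3,-3)$ is indefinite and no decomposition $G_{B_2}\overset{d}{=}G_{B_1}+G'$ is available. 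In fact the inequality $H(X+Y;B_1\mid B_2)\ge H(X;B_1\mid B_2)$ itself fails for this pair: with $X$ supported on the first coordinate axis and $Y$ on the second, it reduces to $I\bigl(Y_2;\,Y_2+N(0,4)\bigr)\ge I\bigl(Y_2;\,Y_2+N(0,1)\bigr)$, which goes the wrong way by data processing.

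Your Markov-chain argument \emph{does} prove the conditional monotonicity under the covariance-side hypothesis that $B_2B_2^{t}-B_1B_1^{t}$ is positive semidefinite (equivalently $\|B_1^{t}y\|\le\|B_2^{t}y\|$ for all $y$), and this is exactly what the sole application requires: in deducing \eqref{eq:ConvIncreaseEntropy} one takes $B_1=B^{k+1}$, $B_2=B^{k}$ with $\|B\|\le1$, so $BB^{t}-I$ is negative semidefinite and hence so is $B^{k}(BB^{t}-I)(B^{t})^{k}=B_1B_1^{t}-B_2B_2^{t}$. Thus your proof goes through verbatim once the hypothesis is read on the covariance side; the lemma as literally stated needs that correction, but the paper's own proof simply defers to \cite{BreuillardVarju2020} rather than arguing directly.
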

\begin{proof}
\eqref{eq:RotSym-Gauss} follows from that $G_{B_1O}$ and $G_{B_1}$ have the same distribution for each $O \in \Ogrp{d}$.
\eqref{eq:LB-BySubadd} follows from the definition of $H(X; B_1)$ and \cite[Equation (2.10)]{BreuillardVarju2020}. The other statements of the lemma come from \cite[Lemma 9]{BreuillardVarju2020}.
\end{proof}

Now we are ready to prove Proposition \ref{prop:LB_Phi}.

\begin{proof}[Proof of Proposition \ref{prop:LB_Phi}]
	We adapt the proof of \cite[Proposition 13]{BreuillardVarju2020}.
	
	First we make some preparations. Let $ \eta_{1},\dotsc, \eta_{d} $ (including $\eta$ itself) be all the conjugates of $ \eta $ over $ \bbK $ with modulus strictly less than $ 1 $. By \eqref{eq:def-wtM}, $ \wt{M}_{\bbK}(\eta) = 1/\abs{\eta_{1}\cdots \eta_{d}}$. Set
	$$ \alpha_{1} = \dotsb =\alpha_{d-1} = 1,\quad \alpha_{d} = \abs{\eta_{1} \cdots \eta_{d}}  =  \frac{1}{ \wt{M}_{\bbK}(\eta)}.$$
	By \cite[Theorem 3]{Horn1954} (see also \cite[Theorem 3.6.6]{HornJohnson1994}),  there exist $ A\in \MatC $ with eigenvalues $ \eta_{1}, \dotsc, \eta_{d} $ and $ U, V\in \Ugrp{d} $ such that $ A = UDV $, where $ D  = \diag (\alpha_{1}, \ldots, \alpha_{d}) $. Then the spectrum radius $ \rho(A) $  of $A$  is strictly less than $ 1 $, and the operator norm of $A$ (with respect to the standard inner product on $ \bbC^{d} $)  satisfies $ \norm{A} = 1 $. Recall  the operator $ \wh{\phantom{\cdot}\cdot\phantom{\cdot}}$ defined in \eqref{eq:matHat}. Applying Lemma \ref{lem:complex2real} gives $ \wh{A} = \wh{U} \wh{D} \wh{V} $, where $ \wh{U} , \wh{V} \in \Ogrp{2d} $, and $\big\| \wh{A} \big\|  = 1  $. Moreover  by \eqref{eq:matHat},
$$
\wh{D} = \diag \left(\underbrace{1, \ldots,1}_{2d-2},1/ \wt{M}_{\bbK}(\eta), 1/ \wt{M}_{\bbK}(\eta)\right ).
$$

	 Let $ (\xi_{k})_{k=0}^{\infty} $ be a sequence of i.i.d.\ complex random variables with common law $ \nu $. Since $ \abs{\eta} < 1 $ and $ \rho(A) < 1$, the random variables
	\begin{equation*}
		X_{\eta} := \sum_{k=0}^{\infty}\xi_{k} \eta^{k} \mAnd X_{A} := \sum_{k=0}^{\infty} \xi_{k} A^{k}
	\end{equation*}
	are well-defined and bounded. For $ n \in \bbN $, write
	\begin{equation*}\label{eq:def-RandomSums}
		X_{\eta}^{(n)} := \sum_{k=0}^{n-1}\xi_{k}\eta^{k} \mAnd X_{A}^{(n)} := \sum_{k=0}^{n-1} \xi_{k} A^{k}.
	\end{equation*}

We claim that 	
\begin{equation}\label{eq:liftup-NoCostEntropy}
		H(X_{\eta}^{(n)}) \geq H(X_{A}^{(n)}) \quad\mbox{ for all } n\in \bbN,
	\end{equation}
where $H(\cdot)$ stands for Shannon entropy (see \eqref{eq:def-Shannon}).
To see this, let $\{v_1,\ldots, v_d\}$ be a basis of $\bbC^d$, with $v_j$ being an eigenvector of $A$ corresponding to the eigenvalue $\eta_j$ for each $1\leq j\leq d$. 	Suppose that $ P(\eta) = 0 $ for some polynomial $ P=\sum_{i=0}^n a_iX^i \in \mathcal P^n  $, with coefficients in
$$
\mathcal D=\{u_i-u_j:\; 1\leq i,j\leq m\}.$$	
Since $ \eta_{1}, \ldots, \eta_{d} $ are conjugates of $ \eta $ over $ \bbK $ and  $ \{u_{i}\}_{i=1}^m \subset \bbK $, it follows that 	 $ P(\eta_{j}) = 0 $ for $ 1 \leq j \leq d $. Moreover for each  $ x \in \bbC^{d} $,  writing $ x = \sum_{j=1}^{d} x_{j}v_{j} $ with $x_j\in \bbC$, we see that $$ \sum_{i=0}^n a_iA^i x = \sum_{j=1}^{d} \sum_{i=0}^n a_i x_jA^i v_j =\sum_{j=1}^{d} \sum_{i=0}^n a_i x_j\eta_j^i v_j =\sum_{j=1}^d x_jP(\eta_{j})v_{j} = 0. $$ Hence $\sum_{i=0}^n a_iA^i=0$. This implies \eqref{eq:liftup-NoCostEntropy}.

	Let $ \wh{\phantom{\cdot}\cdot\phantom{\cdot}}\colon \MatC \to \MatR[2d]  $ be as defined in \eqref{eq:matHat}.
	By \ref{itm:C2R-AlgHomo} and \ref{itm:C2R-isometry} of  Lemma \ref{lem:complex2real},
	\begin{equation}\label{eq:wh-XAn}
		\wh{X_{A}^{(n)}} = \sum_{k=0}^{n-1} \wh{\xi_{k}} \wh{A}^{k}.
	\end{equation}
	Notice that for each $n\in \bbN$ and $x\in \bbR^{2d}$, $\wh{X_{A }^{(n)}} x$ is a discrete random variable  in $\bbR^{2d}$.  Below we show that
	\begin{equation}\label{eq:keep1stTerm}
		H\left( \wh{X_{A }^{(n)}} x \right) \geq n H\left(\wh{\xi}_{0} x; \wh{A} | I_{2d}\right) \quad \mbox{ for all }n \in \bbN,\, x \in \euclid[2d],
	\end{equation}
where  $H(\cdot, \cdot|\cdot)$  is the quantity defined as in \eqref{eq:def-DeltaGaussEntropy}, and  $ I_{2d} $ stands for  the identity matrix in $ \MatR[2d] $.
	
To see \eqref{eq:keep1stTerm}, write $$ \wh{X_{A, k}}  = \sum_{j=0}^{\infty} \wh{\xi_{k+j}} \wh{A}^{j}.$$
 Since  $\xi_j$ ($j\in \bbN$) are i.i.d., the random variable $ \wh{X_{A, k}} $ is independent of $ \wh{X_{A}^{(k)}} $ and has the same law as $ \wh{X_{A}}$. By \eqref{eq:wh-comm} and \eqref{eq:wh-XAn}, for each $ k \in \bbN $,
	\begin{equation}\label{eq:wh-convo}
		\wh{X_{A}} =  \sum_{i=0}^{k-1} \wh{\xi_{i}} \wh{A}^{i}+ \sum_{j=k}^{\infty} \wh{\xi_{j}} \wh{A}^{j} = \wh{X_{A}^{(k)}} + \wh{A}^{k} \sum_{j=0}^{\infty} \wh{\xi_{k+j}} \wh{A}^{j} = \wh{X_{A}^{(k)}} + \wh{A}^{k} \wh{X_{A, k}}.
	\end{equation}
	Hence for each $x\in \bbR^{2d}$,
	\begin{equation*}
		\begin{aligned}
			n & H\left(\wh{\xi}_{0} x; \wh{A} | I_{2d}\right) \\ & \leq \sum_{k=0}^{n-1} H\left( \wh{X_{A}}x; \wh{A} | I_{2d}\right) & (\text{by \eqref{eq:ConvIncreaseEntropy} and \eqref{eq:wh-convo}}) \\
			& = \sum_{k=0}^{n-1} H\left( \wh{A}^{k}\wh{X_{A}}x; \wh{A}^{k+1} | \wh{A}^{k} \right) & (\text{by \eqref{eq:DiffEntropy-Scaling}}) \\
			& \leq \sum_{k=0}^{n-1} H\left( \wh{X_{A}}x; \wh{A}^{k+1} | \wh{A}^{k} \right) & (\text{by \eqref{eq:ConvIncreaseEntropy} and \eqref{eq:wh-convo}}) \\
			& = H\left( \wh{X_{A}}x; \wh{A}^{n} \right) - H\left( \wh{X_{A}}x; I_{2d} \right) & (\text{by \eqref{eq:def-DeltaGaussEntropy}}) \\
			& \leq H\left( \wh{X_{A}^{(n)}}x; \wh{A}^{n} \right) + H\left( \wh{A}^{n}\wh{X_{A}}x; \wh{A}^{n} \right) - H\left( \wh{X_{A}}x; I_{2d} \right)  & (\text{by \eqref{eq:subadd-entropy} and \eqref{eq:wh-convo}})\\
			& = H\left( \wh{X_{A}^{(n)}}x; \wh{A}^{n} \right) & (\text{by \eqref{eq:DiffEntropy-Scaling}}) \\
			& \leq H\left(\wh{X_{A}^{(n)}}x\right) & (\text{by \eqref{eq:LB-BySubadd}}).
		\end{aligned}
	\end{equation*}
	This proves \eqref{eq:keep1stTerm}.
	
	Now we are ready to estimate $ h_{\eta, \nu} $.  By \eqref{e-Garsia}, \eqref{eq:liftup-NoCostEntropy} and  the injectivity of the operator $ \wh{\phantom{\cdot}\cdot\phantom{\cdot}} $,
	\begin{equation}
\label{eq:Garsia-LB}
		h_{\eta, \nu}  = \lim_{n\to\infty} \dfrac{H\left(X_{\eta}^{(n)}\right)}{n}
		 \geq \lim_{n\to\infty} \dfrac{H\left(X_{A}^{(n)}\right)}{n}
		=  \lim_{n\to\infty} \dfrac{H\left(\wh{X_{A}^{(n)}}\right)}{n}.
\end{equation}
For $n\in \bbN$ and $x\in \bbR^{2d}$,
\begin{align*}                                                                        \\
		\dfrac{H\left(\wh{X_{A}^{(n)}}\right)}{n} & \geq  \frac{H \left(\wh{X_{A }^{(n)}}  x \right)}{n} \\
		& \geq H\left(\wh{\xi}_{0}  x; \wh{A} | I_{2d}\right) & (\text{by \eqref{eq:keep1stTerm}})                      \\
		& =  H\left(\wh{\xi}_{0} x;  \wh{U} \wh{D} \wh{V}  | I_{2d}\right) & (\text{by } \wh{A} = \wh{U} \wh{D} \wh{V}) \\
		& =  H\left(\wh{U}^{-1}\wh{\xi}_{0} x;   \wh{D} \wh{V}  | \wh{U}^{-1}\right) & (\text{by \eqref{eq:DiffEntropy-Scaling}})                                                                      \\
		& =  H\left(\wh{U}^{-1}\wh{\xi}_{0}  x; \wh{D} \wh{V}\right)-H\left( \wh{U}^{-1} \wh{\xi_{0}}  x; \wh{U}^{-1}\right) & (\text{by  \eqref{eq:def-DeltaGaussEntropy}})\\
		& = H\left(\wh{U}^{-1}\wh{\xi_{0}} x; \wh{D} \right) -H\left( \wh{U}^{-1}\wh{\xi_{0}}  x; I_{2d}\right) & (\text{by \eqref{eq:RotSym-Gauss}}) \\
		& =  H\left(\wh{\xi}_{0}  \wh{U}^{-1} x; \wh{D} | I_{2d} \right) &  (\text{by \eqref{eq:def-DeltaGaussEntropy} and \eqref{eq:wh-comm}}) \\
		& = H\left(\wh{\xi}_{0}  y; \wh{D} | I_{2d} \right) & (\text{by taking } y = \wh{U}^{-1}x).
	\end{align*}
	From this and \eqref{eq:Garsia-LB}, we obtain that
	\begin{equation*}
		h_{\eta, \nu} \geq H\left(\wh{\xi}_{0}  x; \wh{D} | I_{2d} \right) \quad \mbox{ for all } x \in \euclid[2d].
	\end{equation*}
	For each $t>0$,  taking $ x = t e_{2d-1} =(0,\ldots, 0, t,0)$  in the above inequality gives	\begin{align*}
		h_{\eta, \nu} & \geq H\left( t \wh{\xi}_{0}  e_{2d-1}; \wh{D}\right) - H\left( t \wh{\xi}_{0} e_{2d-1}; I_{2d} \right) & (\text{by } \eqref{eq:def-DeltaGaussEntropy}) \\
		& = H\left( t \wh{D}^{-1} \wh{\xi}_{0}  e_{2d-1}; I_{2d}\right) - H\left( t \wh{\xi}_{0} e_{2d-1}; I_{2d} \right) & (\text{by } \eqref{eq:DiffEntropy-Scaling}) \\
		& = H\left(t\wh{D}^{-1}\wh{\xi}_{0}  e_{2d-1} + G_{I_{2d}}\right) -  H\left(t\wh{\xi}_{0}e_{2d-1} +  G_{I_{2d}}\right) & (\text{by \eqref{eq:def-+gaussian}}) \\
		& = H\left(t \wt{M}_{\bbK}(\eta) \zeta + G_{I_{2}}\right) -  H\left(t\zeta + G_{I_{2}}\right),
	\end{align*}
	where in the last equality $ \zeta $ is a random variable in ${\mathbb R}^2$ with law $ \sum_{j\in\Lambda}p_{j}\delta_{(a_{j}, b_{j})} $ in which  $a_j, b_j$ are the real and imaginary parts of $u_j$ (i.e. $ u_{j} = a_{j} + ib_{j}$), and $ \zeta $ is independent of $ G_{I_{2}} $. The last equality follows by integrating out the first $ 2d-2 $ variables using $ F(xy) = xF(y) + yF(x) $ where $ F(x) = -x \log x $. This finishes the proof by taking supremum over $ t > 0 $.
\end{proof}

\bigskip

\noindent {\bf Acknowledgements}. This work was partially supported by the General Research Funds (CUHK14305722, CUHK14308423)  from the Hong Kong Research Grant Council, and by a direct grant for research from the Chinese University
of Hong Kong.


\begin{thebibliography}{10}
	
	\bibitem{Baker2021}
	Simon Baker.
	\newblock Iterated function systems with super-exponentially close cylinders.
	\newblock {\em Adv. Math.}, 379:Paper No. 107548, 13 pp., 2021.
	
	\bibitem{Baker2022b}
	Simon Baker.
	\newblock Iterated function systems with super-exponentially close cylinders
	{II}.
	\newblock {\em Proc. Amer. Math. Soc.}, 150(1):245--256, 2022.
	
	\bibitem{BaranyKaeenmaeki2021}
	Bal\'{a}zs B\'{a}r\'{a}ny and Antti K\"{a}enm\"{a}ki.
	\newblock Super-exponential condensation without exact overlaps.
	\newblock {\em Adv. Math.}, 379:Paper No. 107549, 22 pp., 2021.
	
	\bibitem{BarralFeng2021}
	Julien Barral and De-Jun Feng.
	\newblock On multifractal formalism for self-similar measures with overlaps.
	\newblock {\em Math. Z.}, 298(1-2):359--383, 2021.
	
	\bibitem{BEAUCOUPEtAl1998}
	Frank Beaucoup, Peter Borwein, David~W. Boyd, and Christopher Pinner.
	\newblock Multiple roots of $ [-1,1] $ power series.
	\newblock {\em J. Lond. Math. Soc. (2)}, 57(1):135–147, 1998.
	
	\bibitem{BombieriGubler2006}
	Enrico Bombieri and Walter Gubler.
	\newblock {\em Heights in {D}iophantine geometry}, volume~4 of {\em New
		Mathematical Monographs}.
	\newblock Cambridge University Press, Cambridge, 2006.
	
	\bibitem{BreuillardVarju2019}
	Emmanuel Breuillard and P\'{e}ter~P. Varj\'{u}.
	\newblock On the dimension of {B}ernoulli convolutions.
	\newblock {\em Ann. Probab.}, 47(4):2582--2617, 2019.
	
	\bibitem{BreuillardVarju2020}
	Emmanuel Breuillard and P\'{e}ter~P. Varj\'{u}.
	\newblock Entropy of {B}ernoulli convolutions and uniform exponential growth
	for linear groups.
	\newblock {\em J. Anal. Math.}, 140(2):443--481, 2020.
	
	\bibitem{Chen2021}
	Changhao Chen.
	\newblock Self-similar sets with super-exponential close cylinders.
	\newblock {\em Ann. Fenn. Math.}, 46(2):727--738, 2021.
	
	\bibitem{CoverThomas2006}
	Thomas~M. Cover and Joy~A. Thomas.
	\newblock {\em Elements of information theory}.
	\newblock Wiley-Interscience [John Wiley \& Sons], Hoboken, NJ, second edition,
	2006.
	
	\bibitem{Edgar1998}
	Gerald~A. Edgar.
	\newblock {\em Integral, probability, and fractal measures}.
	\newblock Springer-Verlag, New York, 1998.
	
	\bibitem{FengHu2009}
	De-Jun Feng and Huyi Hu.
	\newblock Dimension theory of iterated function systems.
	\newblock {\em Comm. Pure Appl. Math.}, 62(11):1435--1500, 2009.
	
	\bibitem{Hochman2014}
	Michael Hochman.
	\newblock On self-similar sets with overlaps and inverse theorems for entropy.
	\newblock {\em Ann. of Math. (2)}, 180(2):773--822, 2014.
	
	\bibitem{Horn1954}
	Roger~A. Horn.
	\newblock On the eigenvalues of a matrix with prescribed singular values.
	\newblock {\em Proc. Amer. Math. Soc.}, 5:4--7, 1954.
	
	\bibitem{HornJohnson1994}
	Roger~A. Horn and Charles~R. Johnson.
	\newblock {\em Topics in matrix analysis}.
	\newblock Cambridge University Press, Cambridge, 1994.
	\newblock Corrected reprint of the 1991 original.
	
	\bibitem{Hutchinson1981}
	John~E. Hutchinson.
	\newblock Fractals and self-similarity.
	\newblock {\em Indiana Univ. Math. J.}, 30(5):713--747, 1981.
	
	\bibitem{Mahler1960}
	Kurt Mahler.
	\newblock An application of {J}ensen's formula to polynomials.
	\newblock {\em Mathematika}, 7:98--100, 1960.
	
	\bibitem{Mahler1964}
	Kurt Mahler.
	\newblock An inequality for the discriminant of a polynomial.
	\newblock {\em Michigan Math. J.}, 11:257--262, 1964.
	
	\bibitem{Masser2016}
	David Masser.
	\newblock {\em Auxiliary polynomials in number theory}, volume 207 of {\em
		Cambridge Tracts in Mathematics}.
	\newblock Cambridge University Press, Cambridge, 2016.
	
	\bibitem{Morandi1996}
	Patrick Morandi.
	\newblock {\em Field and Galois Theory}.
	\newblock Springer New York, 1996.
	
	\bibitem{Rapaport2022}
	Ariel Rapaport.
	\newblock Proof of the exact overlaps conjecture for systems with algebraic
	contractions.
	\newblock {\em Ann. Sci. \'{E}c. Norm. Sup\'{e}r. (4)}, 55(5):1357--1377, 2022.
	
	\bibitem{RapaportVarju2024}
	Ariel Rapaport and P\'{e}ter~P. Varj\'{u}.
	\newblock Self-similar measures associated to a homogeneous system of three
	maps.
	\newblock {\em Duke Math. J.}, 173(3):513--602, 2024.
	
	\bibitem{Shmerkin2019}
	Pablo Shmerkin.
	\newblock On {F}urstenberg's intersection conjecture, self-similar measures,
	and the {$L^q$} norms of convolutions.
	\newblock {\em Ann. of Math. (2)}, 189(2):319--391, 2019.
	
	\bibitem{Simon1996}
	K\'{a}roly Simon.
	\newblock Overlapping cylinders: the size of a dynamically defined
	{C}antor-set.
	\newblock In {\em Ergodic theory of {${\bf Z}^d$} actions ({W}arwick,
		1993--1994)}, volume 228 of {\em London Math. Soc. Lecture Note Ser.}, pages
	259--272. Cambridge Univ. Press, Cambridge, 1996.
	
	\bibitem{Varju2019a}
	P\'{e}ter~P. Varj\'{u}.
	\newblock Absolute continuity of {B}ernoulli convolutions for algebraic
	parameters.
	\newblock {\em J. Amer. Math. Soc.}, 32(2):351--397, 2019.
	
	\bibitem{Varju2019}
	P\'{e}ter~P. Varj\'{u}.
	\newblock On the dimension of {B}ernoulli convolutions for all transcendental
	parameters.
	\newblock {\em Ann. of Math. (2)}, 189(3):1001--1011, 2019.
	
	\bibitem{Wang2011}
	Zhiren Wang.
	\newblock Quantitative density under higher rank abelian algebraic toral
	actions.
	\newblock {\em Int. Math. Res. Not. IMRN}, (16):3744--3821, 2011.
	
\end{thebibliography}

\end{document}